\pdfoutput=1
\newif\ifpersonal
\documentclass[11pt,a4paper,dvipsnames,x11names]{amsart} 

\usepackage{amsmath,amsthm,amssymb,mathrsfs,mathtools,tensor,eucal} 
\usepackage[all,cmtip]{xy}

\usepackage[LGR,T1]{fontenc} 
\usepackage[utf8]{inputenc} 
\usepackage{CJKutf8}

\usepackage{xr}
\usepackage{appendix}

\usepackage{microtype,inconsolata} 
\usepackage{enumerate,comment,braket,xspace,tikz,tikz-cd,csquotes} 
\usetikzlibrary{shapes.geometric}
\usepackage[centering,vscale=0.7,hscale=0.7]{geometry}
\usepackage[hidelinks,colorlinks=true,linkcolor=Red4,citecolor=RoyalBlue]{hyperref}
\usepackage[capitalize]{cleveref}
\usepackage{xcolor}

\usepackage{mathpazo}
\usepackage{euler}
\usepackage{tikzarrows}

\tikzcdset{
	cells={font=\everymath\expandafter{\the\everymath\displaystyle}},
}

\numberwithin{equation}{subsection}
\theoremstyle{plain}
\newtheorem{theorem}[equation]{Theorem}
\newtheorem{thm}[equation]{Theorem}
\newtheorem{thm-intro}{Theorem}
\newtheorem{lemma}[equation]{Lemma}
\newtheorem{lem}[equation]{Lemma}
\newtheorem{proposition}[equation]{Proposition}
\newtheorem{prop}[equation]{Proposition}

\newtheorem{cor}[equation]{Corollary}
\newtheorem{corollary}[equation]{Corollary}
\newtheorem{cor-intro}[thm-intro]{Corollary}

\theoremstyle{definition}
\newtheorem{convention}[equation]{Convention}
\newtheorem{definition}[equation]{Definition}
\newtheorem{defin}[equation]{Definition}
\newtheorem{notation}[equation]{Notation}
\newtheorem{eg}[equation]{Example}
\newtheorem{eg-intro}[thm-intro]{Example}
\newtheorem{rem}[equation]{Remark}
\newtheorem{rem-intro}[thm-intro]{Remark}

\newtheorem{recollection}[equation]{Recollection}
\newtheorem{warning}[equation]{Warning}

\DeclareMathOperator{\Sing}{Sing}

\DeclareMathOperator{\cons}{cons}

\DeclareMathOperator{\bZ}{\mathbb{Z}}

\DeclareMathOperator{\at}{at}

\DeclareMathOperator{\Idem}{Idem}
\DeclareMathOperator{\Res}{Res}
\DeclareMathOperator{\hyp}{hyp}
\DeclareMathOperator{\Open}{Open}

\ifpersonal
\newcommand{\personal}[1]{\textcolor[rgb]{0,0,1}{(Personal: #1)}}
\newcommand{\discussion}[1]{\textcolor{violet}{(Discussion: #1)}}
\else
\newcommand{\personal}[1]{\ignorespaces}
\newcommand{\discussion}[1]{\ignorespaces}
\fi

\newcommand{\C}{\mathbb C}

\newcommand{\Q}{\mathbb Q}
\newcommand{\R}{\mathbb R}
\newcommand{\Z}{\mathbb Z}

\newcommand{\cA}{\mathcal A}
\newcommand{\cB}{\mathcal B}
\newcommand{\cC}{\mathcal C}
\newcommand{\cD}{\mathcal D}
\newcommand{\cE}{\mathcal E}

\newcommand{\cK}{\mathcal K}

\newcommand{\cM}{\mathcal M}

\DeclareFontFamily{U}{BOONDOX-calo}{\skewchar\font=45 }
\DeclareFontShape{U}{BOONDOX-calo}{m}{n}{<-> s*[1.05] BOONDOX-r-calo}{}
\DeclareFontShape{U}{BOONDOX-calo}{b}{n}{<-> s*[1.05] BOONDOX-b-calo}{}
\DeclareMathAlphabet{\mathcalboondox}{U}{BOONDOX-calo}{m}{n}

\newcommand{\bbT}{\mathbb T}

\makeatletter
\let\save@mathaccent\mathaccent
\newcommand*\if@single[3]{%
	\setbox0\hbox{${\mathaccent"0362{#1}}^H$}%
	\setbox2\hbox{${\mathaccent"0362{\kern0pt#1}}^H$}%
	\ifdim\ht0=\ht2 #3\else #2\fi
}
\newcommand*\rel@kern[1]{\kern#1\dimexpr\macc@kerna}
\newcommand*\widebar[1]{\@ifnextchar^{{\wide@bar{#1}{0}}}{\wide@bar{#1}{1}}}
\newcommand*\wide@bar[2]{\if@single{#1}{\wide@bar@{#1}{#2}{1}}{\wide@bar@{#1}{#2}{2}}}
\newcommand*\wide@bar@[3]{%
	\begingroup
	\def\mathaccent##1##2{%
		\let\mathaccent\save@mathaccent
		\if#32 \let\macc@nucleus\first@char \fi
		\setbox\z@\hbox{$\macc@style{\macc@nucleus}_{}$}%
		\setbox\tw@\hbox{$\macc@style{\macc@nucleus}{}_{}$}%
		\dimen@\wd\tw@
		\advance\dimen@-\wd\z@
		\divide\dimen@ 3
		\@tempdima\wd\tw@
		\advance\@tempdima-\scriptspace
		\divide\@tempdima 10
		\advance\dimen@-\@tempdima
		\ifdim\dimen@>\z@ \dimen@0pt\fi
		\rel@kern{0.6}\kern-\dimen@
		\if#31
		\overline{\rel@kern{-0.6}\kern\dimen@\macc@nucleus\rel@kern{0.4}\kern\dimen@}%
		\advance\dimen@0.4\dimexpr\macc@kerna
		\let\final@kern#2%
		\ifdim\dimen@<\z@ \let\final@kern1\fi
		\if\final@kern1 \kern-\dimen@\fi
		\else
		\overline{\rel@kern{-0.6}\kern\dimen@#1}%
		\fi
	}%
	\macc@depth\@ne
	\let\math@bgroup\@empty \let\math@egroup\macc@set@skewchar
	\mathsurround\z@ \frozen@everymath{\mathgroup\macc@group\relax}%
	\macc@set@skewchar\relax
	\let\mathaccentV\macc@nested@a
	\if#31
	\macc@nested@a\relax111{#1}%
	\else
	\def\gobble@till@marker##1\endmarker{}%
	\futurelet\first@char\gobble@till@marker#1\endmarker
	\ifcat\noexpand\first@char A\else
	\def\first@char{}%
	\fi
	\macc@nested@a\relax111{\first@char}%
	\fi
	\endgroup
}
\makeatother

\newcommand{\PSh}{\mathrm{PSh}}
\newcommand{\Sh}{\mathrm{Sh}}
\newcommand{\Shhyp}{\mathrm{Sh}^{\mathrm{hyp}}}
\newcommand{\HSh}{\mathrm{Sh}^{\mathrm{hyp}}}
\newcommand{\pHSh}{\tensor*[^{\mathfrak p}]{\mathrm{Sh}}{^{\mathrm{hyp}}}}
\newcommand{\ptauhypX}{\tensor*[^{\mathfrak p}]{\tau}{_{X}}} 
\newcommand{\rtauhypX}{\tensor*[^{\mathfrak r}]{\tau}{_{X}}} 

\newcommand{\Mod}{\mathrm{Mod}}

\newcommand{\Aff}{\mathrm{Aff}}

\newcommand{\Tor}{\mathrm{Tor}}

\newcommand{\Cat}{\categ{Cat}}
\newcommand{\dSet}{\categ{Set}} 
\newcommand{\cS}{\categ{Spc}}
\newcommand{\Spc}{\categ{Spc}}

\newcommand{\categ}[1]{\mathrm{#1}}

\newcommand{\PrL}{\categ{Pr}^{\mathrm{L}}}

\newcommand{\PrLomega}{\categ{Pr}^{\mathrm L,\omega}}

\newcommand{\Perf}{\mathrm{Perf}}
\newcommand{\bfPerf}{\mathbf{Perf}}

\newcommand{\bfCoh}{\mathbf{Coh}}

\DeclareMathOperator{\Cons}{Cons}
\newcommand{\ConsPhyp}{\Cons_P^{\mathrm{hyp}}}
\newcommand{\ConsRhyp}{\Cons_R^{\mathrm{hyp}}}
\newcommand{\ConsQhyp}{\Cons_Q^{\mathrm{hyp}}}
\newcommand{\ConsShyp}{\Cons_S^{\mathrm{hyp}}}
\newcommand{\Conshyp}{\Cons^{\mathrm{hyp}}}

\newcommand{\stack}[1]{\mathbf{#1}}
\newcommand{\bfConsPhyp}{\stack{Cons}_P}
\newcommand{\bfLoc}{\stack{LC}}
\newcommand{\bfConsRhyp}{\stack{Cons}_R}
\newcommand{\bfConsQhyp}{\stack{Cons}_Q}

\DeclareMathOperator{\LC}{LC}
\newcommand{\LChyp}{\LC^{\hyp}}

\newcommand{\st}{\mathrm{st}}

\newcommand{\inv}{^{-1}}
\newcommand{\id}{\mathrm{id}}

\newcommand{\op}{^\mathrm{op}}

\usetikzlibrary{decorations.markings} 
\tikzset{
  closed/.style = {decoration = {markings, mark = at position 0.5 with { \node[transform shape, xscale = .8, yscale=.4] {/}; } }, postaction = {decorate} },
  open/.style = {decoration = {markings, mark = at position 0.5 with { \node[transform shape, scale = .7] {$\circ$}; } }, postaction = {decorate} }
}

\DeclareMathOperator{\Fun}{Fun}
\DeclareMathOperator{\FunR}{Fun^R}

\DeclareMathOperator{\Hom}{Hom}

\DeclareMathOperator{\Map}{Map}
\DeclareMathOperator{\Mor}{Mor}

\DeclareMathOperator{\Spec}{Spec}

\DeclareMathOperator*{\colim}{colim}

\DeclareMathOperator*{\Piinfty}{\Pi_{\infty}}
\DeclareMathOperator{\pPervP}{{}^{\mathbf{\mathfrak p}} \mathbf{Perv}_P}
\DeclareMathOperator{\rPervR}{{}^{\mathbf{\mathfrak p}} \mathbf{Perv}_R}
\DeclareMathOperator{\pfrak}{\mathfrak p}
\DeclareMathOperator{\rfrak}{\mathfrak r}

\newcommand{\category}{$\infty$-category}

\newcommand{\Ind}{\mathrm{Ind}}

\def\noloc{{\mathpalette\reflectedop\colon}}
\def\reflectedop#1#2{\mathop{\reflectbox{$#1{#2}$}}}
\newcommand{\ptau}{{}^{\pfrak}\tau}
\newcommand{\dAff}{\mathrm{dAff}}
\newcommand{\bfCons}{\mathbf{Cons}}
\newcommand{\dCRing}{\mathrm{dCRing}}

\begin{document}
\title{The derived moduli of perverse sheaves}

\author{Peter J. Haine}
\address{Peter J. Haine, Department of Mathematics, University of Southern California, Kaprielian Hall, 3620 S Vermont Ave, Los Angeles, CA 90089, USA}
\email{phaine@usc.edu}

\author{Mauro PORTA}
\address{Mauro PORTA, Institut de Recherche Mathématique Avancée and Institut Universitaire de France (IUF), 7 Rue René Descartes, 67000 Strasbourg, France}
\email{porta@math.unistra.fr}

\author{Jean-Baptiste Teyssier}
\address{Jean-Baptiste Teyssier, Institut de Mathématiques de Jussieu and Institut Universitaire de France (IUF), 4 place Jussieu, 75005 Paris, France}
\email{jean-baptiste.teyssier@imj-prg.fr}

\date{\today}

\subjclass[2020]{55P65,55U25,14D23,32S40,32S60}
\keywords{Monodromy, exodromy, constructible sheaves, hyperconstructible, hypersheaves, stratified spaces, exit-paths, moduli stack, perverse sheaves, Hall algebras, CoHA}

\begin{abstract}
	We construct higher derived Artin stacks parametrizing constructible sheaves on complex algebraic varieties and compact real analytic varieties.
	Furthermore, we show that every perversity function gives rise to an open substack of perverse sheaves, which is a $1$-Artin stack locally of finite presentation that generalizes usual character stacks.
	As a sample application of the derived structure, we construct new examples of cohomological Hall algebras associated to punctured Riemann surfaces.
\end{abstract}

\maketitle


\tableofcontents

\section{Introduction}

This paper is concerned with the construction, in a high level of generality, of a derived Artin stack parametrizing perverse sheaves on a fixed stratified space satisfying some finiteness conditions.
The main results of this paper appeared first in a lesser generality in the first versions of \cite{Porta_Teyssier_Exodromy,Beyond_conicality}.
We subsequently decided to remove this material from the later versions of said papers and collect it here, in order to make them accessible to a wider community, without a heavy background in stratified homotopy theory.


\subsection*{Historical context}

Let $\Gamma$ be a finitely generated group.
The \emph{character variety of $\Gamma$} is a scheme $\mathbb X(\Gamma)$ parametrizing finite dimensional semi-simple representations of $\Gamma$.
Character varieties have a rich history; perhaps the first systematic treatment is \cite{Lubotzky_Magid}, although predated by work of Weil \cite{Weil_Discrete_subgroups_of_Lie_groups_I,Weil_Discrete_subgroups_of_Lie_groups_II}, Artin \cite{Artin_Azumaya} and Procesi \cite{Procesi_finite_dimensional} (see \cite{Sikora_Character_varieties} for a more recent treatment).
\personal{These references are not random: they are exactly the references cited in the introduction of \cite{Lubotzky_Magid}.}
They are ubiquitous in various areas of mathematics and physics, such as low-dimensional topology \cite{Bernard_Reidemeister_torsion_character_varieties,Gunningham_Jordan_Safronov}, gauge theory \cite{Atiyah_Bott_Yang_Mills,Hitchin_Self_duality}, mirror symmetry \cite{Hausel_Thaddeus}, nonabelian Hodge theory \cite{Simpson_Subspaces_of_moduli,Simpson_IHES_I,Simpson_IHES_II} and the geometric Langlands program \cite{BenZvi_Nadler_Betti_Langlands}.
As a result, their topology has been deeply investigated, especially in the case of surface groups (i.e., fundamental groups of complex algebraic curves).
They provide important examples of symplectic and Poisson varieties \cite{Goldman_Symplectic_nature,Goldman_Millson}, and their quantization has equally been studied in detail \cite{BenZvi_Brochier_Jordan,Ganev_Jordan_Safronov}.
The papers \cite{HLRV_I,HLRV_II} marked a landmark achievement in understanding their homology.

Character varieties can be seen as \emph{good moduli spaces} of more fundamental \emph{character stacks}.
These are Artin stacks parametrizing representations of $\Gamma$ together with their automorphisms; their singularities are milder than those of the character varieties.
Moreover, when $\Gamma = \pi_1(X,x)$ for a (sufficiently nice) topological space $X$, by the monodromy correspondence, character varieties can equally be described as the moduli of local systems on $X$.
In this case the \emph{higher homotopy groups} provide additional structure on the character variety: that of a \emph{derived scheme}.
This additional structure is a fundamental tool, heavily exploited in the geometric Langlands program; it also leads to a better understanding of the singularities of the character variety, and certain phenomena (e.g., their symplectic nature) becomes more transparent working simultaneously at the \emph{stacky and derived} level \cite{PTVV_2013}.
This is easy to see:

\begin{eg-intro}\label{eg:character_variety_S2}
	Since the $ 2 $-sphere $ S^2 $ is simply connected, the connected component of the character \emph{stack} of $ \pi_1(S^2) = \ast $ corresponding to representations of dimension $n$ coincides with $\mathrm{BGL}_n$ (the character variety is in this case reduced to a single point).
	On the other hand, the tangent complex of derived stack $\mathbf{LC}_n(S^2)$ at the trivial local system of rank $n$ is given by $\mathrm H^\ast_{\mathrm{sing}}(S^2;\Z)^{\oplus n}[1]$, see \cite[Proposition 2.2.6.6]{HAG-II}.
\end{eg-intro}

At the same time, local systems admit natural generalizations: constructible sheaves and perverse sheaves.
Combining ideas of stratified homotopy theory (that have been developed by the authors in \cite{Beyond_conicality} and recalled in \S\ref{sec:stratified_homotopy} in a way that they can be used as a black box) with the theory of moduli of flat objects of a category \cite{Toen_Moduli,Toen_Vaquie_Systemes_de_points,DPS_Stable_pairs}, in this paper, we construct both constructible and perverse generalizations of character stacks. 
We are only concerned with generalizing character \emph{stacks}; good moduli spaces for the perverse character stack has been obtained recently in \cite{Lampetti_Good_moduli}.
In the even more recent work \cite{Christ_Lampetti_Lagrangian_structures}, it is shown that the perverse character stack carries (under additional niceness hypotheses) a canonical shifted Poisson structure.
Both of these works should be understood as \emph{continuations and completions} of the current paper.


\subsection*{Moduli of perverse sheaves}

In \cite{Gelfand_MacPherson_Vilonen}, the authors attached to each complex projective variety $X$ equipped with a Whitney stratification $P$ a finite quiver $Q(X,P)$ \emph{with relations}, whose representations coincide with perverse sheaves on $(X,P)$, with respect to the middle perversity.
Out of this, it is easy to construct an Artin stack $\mathbf{Rep}_{Q(X,P)}$ of representations of $Q(X,P)$, which can be interpreted as a moduli of perverse sheaves.
Before going any further, let us explain the two main limitations of this approach:


\subsubsection*{Poor functoriality} 

The quiver $Q(X,P)$ depends on several choices (and most notably a projective embedding of $X$ chosen as a function of the stratification $P$).
This makes it hard to control how $\mathbf{Rep}_{Q(X,P)}$ depends on the stratified $(X,P)$ itself.
For instance, it is not completely clear from this perspective how to let $P$ vary. 
Instead, our method allows us to construct a moduli stack of sheaves that are perverse with respect to \emph{some} stratification, see \cref{thm_intro:varying_stratification} below.
In a similar spirit, in \cite{Nitsure_Sabbah_I,Nitsure_Regular_holonomic} the authors deliberately chose an alternative presentation of perverse sheaves because the quiver of \cite{Gelfand_MacPherson_Vilonen} did not interact well with the Riemann--Hilbert correspondence.


\subsubsection*{Absence of a derived structure} 

Recall from \cite[\S I.5.5]{Porta_HDR} (or \cite[\S II.2.6]{DPS_Stable_pairs}) that a standard way to produce a derived enhancement of the moduli of objects of an \emph{abelian} category $\cA$ is to exhibit $\cA$ as the heart of a sufficiently nice $t$-structure $\tau$ on a stable $\infty$-category $\cC$ \emph{of finite type}.
In this case, the finitness allows us to form Toën and Vaquié's moduli of object construction $\cM_\cC$.
The $t$-structure allows us to cut out an open substack $\bfCoh_{\mathsf{ps}}(\cC,\tau)$ of $\cM_\cC$, which is the desired enhancement of the moduli of objects of $\cA = \cC^\heartsuit$. 
Notice that the derived enhancement \emph{depends} on the choice of $(\cC,\tau)$.

In the case at hand, the natural choice would be the derived category of representations of $Q(X,P)$, but since this quiver has relations, its derived category is not generally of finite type in the sense of \cite{Toen_Moduli}:

\begin{eg-intro}
	Let $X = \mathbb P^2_{\mathbb C}$ be the complex projective plane, stratified in a copy of $\mathbb P^1_{\mathbb C}$.
	It is shown in \cite[Example 6.3]{MacPherson_Vilonen_Elementary_construction} that the category of perverse sheaves in this case is equivalent to the category of representations of the quiver
	\[ \begin{tikzcd}
		\bullet \arrow[shift left=3pt]{r}{a} & \bullet \arrow[shift left=3pt]{l}{b} \ ,
	\end{tikzcd} \]
	subject to the relations $ab = ba = 0$.
	In other words, this category of perverse sheaves is the category of representations of the preprojective algebra of the quiver $A_2$, which has infinite homological dimension -- see e.g., \cite[Remark 4.11]{Brenner_Butler_King}.
\end{eg-intro}

For this special example, it is easy to find a good replacement, namely using the $2$-Calabi-Yau completion (a.k.a.\ Ginzburg dg-algebra) of $A_2$ \cite{Keller_Deformed_CY,Bozec_Calaque_Scherotzke_Relative}) instead of its preprojective algebra.
For more complicated stratified spaces, we would not know how to proceed in this direction.
Even worse, \cref{eg:character_variety_S2} shows that even when $Q(X,P)$ is homologically of finite type, the resulting derived structure does not match the naturally expected one.

Instead, our method provides a canonical derived structure, as a concrete incarnation of the geometry of the stratification: the tangent complex of our derived stack is highly sensitive to the cohomology of the \emph{strata and the links} of $(X,P)$. 
In this approach, the homological finitess property of $Q(X,P)$ is replaced by the finiteness theorems of the exit-path $\infty$-categories proved in \cite{Beyond_conicality}.
The construction of the perverse cohomological Hall algebra (see \cref{thm:perverse_CoHA}) is a first piece of evidence that the derived structure we construct here is very robust. 
Another piece of evidence has been recently obtained by Christ--Lampetti \cite{Christ_Lampetti_Lagrangian_structures}, where they show that the derived stack of perverse sheaves constructed here has a canonical shifted Poisson structure.

Our approach has also some other \emph{minor-but-pleasant} features: dropping the projectivity and Whitney assumptions on $(X,P)$, and the restriction to middle perversity.


\subsection*{Moduli of constructible complexes}

\personal{(Mauro) I think that below it is important to never mention the word hypersheaf, which is essentially a turn-off for anyone vaguely interested in the geometry of this moduli space.}
Central to our approach is the construction of a bigger, highly non-separated higher Artin stack in the sense of Simpson \cite{Simpson_Algebraic_1996}.
This larger derived stack, denoted $\bfConsPhyp(X)$, parametrizes \emph{constructible complexes} and is obtained as a \emph{moduli of objects} in the sense of Toën--Vaquié \cite{Toen_Moduli}.
A subtle point that deserves to be mentioned here is that $\bfConsPhyp(X)$ is realized as the moduli of objects of the \emph{large} stable $\infty$-category $\mathrm{Cons}_P(X;\Mod_k)$ of \emph{unbounded constructible complexes with infinite dimensional stalks}.
The key insight is that the finiteness property of the exit-path category $\Pi_\infty(X,P)$ (proven in a large variety of examples in \cite{Beyond_conicality}) allows to show that $\mathrm{Cons}_P(X;\Mod_k)$, which a priori is very large, is in fact of finite type in the sense of \cite{Toen_Moduli}.
This allows us to bootstrap off the main theorem of \emph{loc.\ cit.}
One of the fundamental points is that the stalk functors are corepresentable in $\mathrm{Cons}_P(X;\Mod_k)$ by objects that are compact, but with infinite dimensional stalks (see \cref{obs:objects_of_Exit_with_locally_weakly_contractible_strata} and \cite[Warning 6.5.2]{Porta_Teyssier_Exodromy}).
This implies that the moduli of objects of $\mathrm{Cons}_P(X;\Mod_k)$ (whose construction is recalled in \cref{recollection_Toen_Vaquie}) actually parametrizes $P$-constructible sheaves with \emph{perfect} stalks.

As several papers already demonstrated \cite{Artin_Zhang,Toen_Vaquie_Systemes_de_points,BLMNPS,DPS_Stable_pairs}, a sufficiently well-behaved $t$-structure $\tau$ allows one to cut out a better behaved open $1$-Artin substack inside of $\bfCons_P(X)$, parametrizing families of $\tau$-flat objects.
Indeed, the bulk of this paper is devoted to check that the perverse $t$-structure (of any perversity) is well-behaved in this sense.


\subsection*{Statement of results}

We can now state our main results.
The first main theorem below is a consequence of \cref{prop:categorically_compact_implies_representable} and the finiteness results of \cite{Beyond_conicality}, recalled in the main text as Theorems~\ref{thm:subanalytic_stratified_spaces_are_conically_refineable} \& \ref{thm:algebraic_stratified_spaces_are_conically_refineable_and_categorically_finite}.
The statements involve \emph{subanalytic stratified spaces} (in the sense of \cref{def:subanalytic_stratified_space}); the key example of such is a locally finite stratification a of real analytic manifold by subanalytic subsets.
We also use the term \emph{algebraic stratified space} to mean a stratification of the $ \mathbb{R} $-points of an $ \mathbb{R} $-variety by Zariski locally closed subsets (see \cref{def:algebraic_stratified_space}).

\personal{(Mauro): I'd avoid to state the most general result here - for $(X,P)$ categorically compact and locally categorically compact. The reason is that excess of generality might end up being a turn-off for the reader.}

\begin{thm-intro}[Fixed stratification, see \cref{prop:categorically_compact_implies_representable} \& \cref{cor:examples_when_ConsP_is_locally_geometric}]\label{thm-intro:perverse}
	Let $(X,P)$ be either a compact subanalytic stratified space or an algebraic stratified space, and let $ k $ be a connective derived commutative ring (a.k.a. an animated commutative ring or simplicial commutative ring).
	Then there exists a geometric derived stack $\bfCons_P(X)$ locally of finite presentation over $k$ parametrizing complexes of $P$-constructible sheaves with perfect stalks.
	Furthermore, for every perversity function $\pfrak \colon P \to \mathbb Z$, there exists an open $1$-Artin substack
	\begin{equation*}
		\pPervP(X) \subset \bfCons_P(X)
	\end{equation*}
	parametrizing flat families of perverse sheaves on $X$.
\end{thm-intro}

\begin{thm-intro}[Varying stratification, see \cref{thm:varying_stratification} \& \cref{cor:varying_stratification_perverse}]\label{thm_intro:varying_stratification}
	Let $X$ be a compact subanalytic space (resp., a $\mathbb R$-algebraic variety) and let $ k $ be a connective derived commutative ring.
	Then there exists a geometric derived stack $\bfCons(X)$ locally of finite presentation over $k$ parametrizing complexes of sheaves with perfect stalks which are constructible with respect to \emph{some} subanalytic (resp., $\mathbb R$-algebraic) stratification.
	If $\pfrak$ denotes the middle perversity function, there is an open $1$-Artin substack 
	\begin{equation*}
		\tensor*[^{\pfrak}]{\mathbf{Perv}}{}(X) \subset \bfCons(X)
	\end{equation*}
	parametrizing flat families of perverse sheaves that are constructible with respect to \emph{some} subanalytic (resp., $\mathbb R$-algebraic) stratification.
\end{thm-intro}

These result are special cases of more general ones on stratified spaces satisfying appropriate finiteness conditions; see \cref{prop:categorically_compact_implies_representable} and \cref{thm:representability_perv}.

Finally we showcase the utility of the derived structure by establishing the existence of a \emph{perverse cohomological Hall algebra}, see \cref{thm:perverse_CoHA}.
It should be noted that this last result is a straightforward consequence of the main results of \cite{DPS_Stable_pairs}, the criterion for the properness of quot schemes obtained in \cite{Lampetti_Good_moduli} and the formula for the cotangent complex obtained in \cref{prop:categorically_compact_implies_representable}.

%
%


\subsection*{Notation}

All derived commutative rings (in the sense of Raksit \cite[\S4]{arXiv:2007.02576}) considered in this paper are \textit{connective}, i.e., are what are typically called animated commutative rings or simplicial commutative rings.
Therefore, in this paper, we simply use the term \textit{derived commutative ring} to refer to a \textit{connective} derived commutative ring, and write $\dCRing$ for the $\infty$-category of connective derived commutative rings.
For $ A \in \dCRing $, we write $ \Mod_A $ for the stable $\infty$-category of $A$-modules.
We write $\Perf_A \subset \Mod_A$ for the full subcategory spanned by \textit{perfect} $A$-modules, i.e., the smallest stable full subcategory containing $A$ and closed under retracts.
The $\infty$-category $ \Mod_A $ is compactly generated with full subcategory of compact objects $ \Perf_A $ \cite[Proposition 7.2.4.2]{Lurie_Higher_algebra}, \cite[Notation 25.2.1.1]{Lurie_SAG}.

Let $ \cC $ and $ \cD $ be presentable $ \infty $-categories.
We write $ \cC \otimes \cD $ for the \emph{tensor product} of $ \cC $ and $ \cD $ in the $ \infty $-category $ \PrL $ of presentable $ \infty $-categories and left adjoint functors.
The tensor product is the universal presentable $ \infty $-category equipped with a functor $ \cC \times \cD \to \cC \otimes \cD $ that preserves colimits separately in each variable. 
We refer the reader to \cite[\S4.8.1]{Lurie_Higher_algebra} for further background in the tensor product.


\subsection*{Acknowledgments}

We are grateful to Federico Binda, Bernhard Keller, Enrico Lampetti, Nitin Nitsure, and Olivier Schiffmann for useful conversations about this paper.

PH gratefully acknowledges support from the NSF Mathematical Sciences Postdoctoral Research Fellowship under Grant \#DMS-2102957.
MP and JBT gratefully acknowledge the support of the ANR grant DAG-Arts (24-CE40-4098) and of the Institut Universitaire de France (IUF).


\section{Recollections on stratified homotopy theory}\label{sec:stratified_homotopy}

In this section, we collect some general material on (hyper)constructible (hyper)sheaves and exodromic stratifications.
These results are not original: most of them were obtained in \cite[Appendix A]{Lurie_Higher_algebra} or in \cite{Porta_Teyssier_Exodromy,HPT,Beyond_conicality}.
The reader already acquainted with these ideas is invited to skip this section.


\subsection{Stratified topological spaces}

Let $P$ be a poset.
We equip the underlying set of $ P $ with the topology whose open subsets are the upward-closed subsets $Q\subset P$.
A \emph{stratified space} is a topological space $X$ equipped with a continuous map to a poset.
For a subset $S\subset P$, we set $X_S \coloneqq S \times_P X$ and we let $X_S\to S$ be the induced stratification; we denote by $i_S \colon X_S \hookrightarrow X$ the inclusion.
For $a\in P$, the subset $X_a $ is the \emph{stratum of $(X,P)$ over $a$}.
The collection of stratified spaces organize into a category in an obvious manner.

\begin{rem}
	We abuse notation by denoting a stratification of $X$ by $P$ as $(X,P)$ instead of $ X\to P$ and refer to $(X,P)$ as a stratified space.
\end{rem}

\begin{eg}
	Let $X$ be a topological space equipped with a partition $S = \{X_p\}_{p \in P}$ into disjoint locally closed subspaces.
	We define a partial order on $P$ by $p \leqslant q $ if and only if $ X_p \cap \overline{X_q} \ne \emptyset$.
	This gives rise to a continuous map $X \to P$, which is a stratification in the above sense.
\end{eg}

\begin{eg}
	Let $f \colon Y\to Q$ be a stratified space.
	Write $C(Y) \coloneqq \ast \sqcup \left( Y\times \mathbb{R}_{>0} \right)$.
	The set $C(Y)$ is endowed with the topology whose open subsets are the subsets $U\subset C(Y)$ such that $U\cap \left( Y\times \mathbb{R}_{>0}\right)$ is open and if $\ast \in U$, then 
	\[ C_{\varepsilon}(Y)\coloneqq \{\ast\} \sqcup \left( Y\times (0,\varepsilon) \right)\subset U \]
	for some $\varepsilon >0$.
	Let $Q^{\lhd} $ be the poset obtained from $Q$ by adding a smallest element $-\infty$.
	We define a continuous map $g \colon C(Y)\to Q^{\lhd}$ by sending $\ast$ to $-\infty$  and $(y,t)\in Y\times \mathbb{R}_{>0} $ to $f(y)$.
	We refer to $(C(Y),Q^{\lhd})$ as the \textit{cone} of $(Y,Q)$.
\end{eg}

The following should be thought as a (purely topological) regularity condition on a stratified space (akin to the notion of a Whitney stratification).

\begin{defin}\label{def_conical}
	Let $(X,P)$ be a stratified space.
	We say that $(X,P)$ is \textit{conically stratified} if for every point $x\in X$ lying over $a\in P$, there exists a topological space $Z$, a stratified space $(Y,P_{> a})$ and a map of stratified spaces  $(Z \times C(Y), P_{\geqslant a}) \to (X,P)$ inducing an homeomorphism between  $Z \times C(Y)$ and an open neighbourhood of $x$ in $X$.
\end{defin}

\begin{definition}
	Let $ X $ be a topological space.
	We say that a stratification $ X\to P $ is \textit{locally finite} if for every point $ x \in X $, there is an open neighborhood $ U $ of $ x $ such that $ U $ intersects only finitely many strata of $ (X,P) $.
\end{definition}

\begin{eg}[Simplicial complexes]\label{eg:simplicial_complexes}
	Let $ (V,S) $ be a simplicial complex, and regard $ S $ as a poset ordered by inclusion.
	Write $ \Delta^{(V,S)} $ for the geometric realization of $ (V,S) $.
	There is a natural stratification $ \Delta^{(V,S)}\to S $ with contractible strata, see \cite[Definition A.6.7]{Lurie_Higher_algebra}.
	Moreover, \cite[Proposition A.6.8]{Lurie_Higher_algebra} shows that if $(V,S)$ is \textit{locally finite}, this is a conical stratification.
\end{eg}

\begin{eg}
	If $X$ is a differentiable manifold equipped with a \emph{Whitney stratification} $P$ (see e.g., \cite[Definition 2.5]{Nocera_Volpe_Whitney_stratifications}), then it is conically stratified.
	In fact, it is even \emph{conically smooth} in the sense of \cite{Ayala_Francis_Tanaka_Local_structures}, see \cite{Nocera_Volpe_Whitney_stratifications}.
\end{eg}

We conclude by singling out two classes of stratified spaces that are particularly important in this paper.

\begin{definition}\label{def:subanalytic_stratified_space}
	A \textit{subanalytic stratified space} is the data of a triple $ (M,X,P) $ where $ M $ is a smooth $\R$-analytic space, $ X \subset M $ is a locally closed subanalytic subset, and $ X \to P $ is a locally finite stratification by subanalytic subsets of $ M $.
\end{definition}

\begin{definition}\label{def:algebraic_stratified_space}
	An \textit{algebraic stratified space} is the data of a stratified space $ (X,P) $ where $ X $ is (the real points of) an algebraic variety over $ \R $ and $ X \to P $ is a finite stratification by Zariski locally closed subsets.
\end{definition}


\subsection{Hypersheaves, hyperconstancy, and hyperconstructibility}

We content ourselves to introduce our notation, and we refer to \cite[\S1.1]{HPT}, \cite[\S1.2]{Beyond_conicality}, or \cite[\S III.2.1]{Porta_HDR} for a more thorough review of these notions.

\begin{notation}
	Let $X$ be a topological space.
	We denote by $\PSh(X)$ the $\infty$-category of $\Spc$-valued presheaves on $X$ and by $\Sh(X)$ (resp., $\HSh(X)$) the full subcategory spanned by sheaves (resp., hypersheaves).
	For $\cE \in \PrL$, we set
	\[ \HSh(X;\cE) \coloneqq \HSh(X)\otimes \cE\simeq \FunR(\HSh(X)\op,\cE)  \ , \]
	and similarly for (pre)sheaves.
\end{notation}

\begin{rem}
	Sheaves and hypersheaves on topological spaces coincide in many situations that are the central focus of interest of the current paper. For example, this is the case if $X$ admits a CW structure \cite{MO:168526}.
	The reader uneasy with hypersheaves and only interested in reasonable geometric examples can therefore simply ignore the attribute \emph{hyper} in the rest of the paper.
\end{rem}

\begin{rem}
	The functoriality of the tensor product in $\PrL$ induces for every $\cE \in \PrL$ a \emph{hypersheafification functor} $(-)^{\hyp} \colon \PSh(X;\cE)\to \HSh(X;\cE)$, which is still a localization by \cite[Lemma I.4.11]{Porta_HDR}.
\end{rem}

\begin{notation}
	Let $f \colon X \to Y$ be a map of topological spaces and let $\cE \in \PrL$.
	We denote by
	\[ f\inv \colon \PSh(Y;\cE) \leftrightarrows \PSh(X;\cE) \colon f_\ast \]
	the induced adjunction.
	The functor $f_\ast$ preserves (hyper)sheaves, and it is part of an adjunction
	\[ f^{\ast,\hyp} \colon \HSh(Y;\cE) \leftrightarrows \HSh(X;\cE) \colon f_\ast \ . \]
	Inspection reveals that $f^{\ast,\hyp}$ can be written as $(-)^{\hyp} \circ f\inv$.
\end{notation}

\begin{notation}
	For a topological space $X$, we denote by $\Gamma_X \colon X \to \ast$ the unique map.
\end{notation}

\begin{defin}
	Let $X$ be a topological space and let $\cE \in \PrL$.
	We say that a hypersheaf  $F \in \HSh(X;\cE)$ is \emph{hyperconstant} if $ F $ lies in the image of
	\begin{equation*}
		\Gamma_X^{\ast,\hyp} \colon \cE \to \HSh(X;\cE) \rlap{\ .}
	\end{equation*}
	We say that $ F $ is \emph{locally hyperconstant} if $ F $ is hyperconstant on an open cover of $X$.
	We denote by $\LChyp(X;\cE)$ the full subcategory of $\HSh(X;\cE)$ spanned by locally hyperconstant hypersheaves.
\end{defin}

\begin{defin}[{\cite{Lejay_Constructible}}]
	Let $(X,P)$ be a  stratified space and let $\cE\in \PrL$.
	We say that $F \in \HSh(X;\cE)$ is \emph{hyperconstructible} if for every $p \in P$, the hypersheaf $i_p^{\ast,\hyp}(F)$ is locally hyperconstant on $X_p$.
	We denote by $\ConsPhyp(X;\cE)$ the full subcategory of  $\HSh(X;\cE)$ spanned by hyperconstructible hypersheaves on $(X,P)$.
\end{defin}

\begin{rem}
      When $\cE=\Spc$, we write $\ConsPhyp(X)$ instead of $\ConsPhyp(X;\Spc)$.
\end{rem}

\begin{rem}\label{rem:pullback_functoriality}
	Given a map of stratified spaces $ f \colon (X,P) \to (Y,Q) $ and $\cE\in \PrL$, the hypersheaf pullback $f^{\ast,\hyp} : \HSh(Y;\cE) \to \HSh(X;\cE)$ restricts to a functor
	\begin{equation}\label{eq:pullback_functoriality}
		f^{\ast,\hyp} \colon \ConsQhyp(Y;\cE)\to \ConsPhyp(X;\cE) \ .
	\end{equation}
\end{rem}

\begin{warning}
	Let $k$ be a field and let $\Mod_k^\heartsuit$ be the abelian category of $k$-vector spaces.
	Then inside $\ConsPhyp(X;\Mod_k^\heartsuit)$ we allow constructible sheaves with infinite dimensional stalks.
	While this goes against the typical convention in algebraic geometry, it is crucial for us to allow such objects, for instance in the proof of \cref{toen_vaquié_comparaison_compact}. See also \cite[Warning 6.5.2]{Porta_Teyssier_Exodromy}.
\end{warning}

We conclude with a remark on the functoriality in the coefficient $\infty$-category $\cE$.

\begin{recollection}\label{recollection:functoriality_in_the_coefficients}
	Let $X$ be a topological space.
	Let $ L \colon \cE \to \cD $ be a left adjoint functor between presentable $ \infty $-categories with right adjoint $R \colon \cD \to \cE$.
	We denote by 
	\begin{equation*}
		L^{\hyp} \coloneqq \id \otimes L \colon \Shhyp(X;\cE) \to \Shhyp(X;\cD)
	\end{equation*}
	the induced left adjoint.
	Concretely, $L^{\hyp}$ is obtained via the composition of
	\begin{equation*}
		L \circ - \colon \PSh(X;\cE)  \to \PSh(X;\cD)
	\end{equation*}
	with the hypersheafification functor $(-)^{\hyp}$.
	Via the identification 
	\begin{equation*}
		\HSh(X;\cE) \simeq \FunR(\HSh(X)\op,\cE)
	\end{equation*}
	supplied by \cite[Proposition 4.8.1.17]{Lurie_Higher_algebra}, the right adjoint $R^{\hyp}$ of $L^{\hyp}$ is given by  composing with $R$.
\end{recollection}

\begin{rem}\label{hyp_coeff_change_inverse_image}
	The formation of $ L^{\hyp} $ commutes with hypersheaf pullback. 
\end{rem}

\begin{eg}\label{projection_formula}
	Let $A \in \dCRing$ and let $M \in \Mod_A$.
	In the setting of \cref{recollection:functoriality_in_the_coefficients}, we set $L \coloneqq M \otimes_A (-)$ and
	\[ M\otimes_A^{\hyp} ( - ) \coloneqq (M\otimes_A (-))^{\hyp} \colon \Shhyp(X;\Mod_A)\to \Shhyp(X;\Mod_A) \ . \]
	Similarly, given a morphism $A \to B$ in $\dCRing$, the adjunction
	\[ B \otimes_A (-) \colon \Mod_A \leftrightarrows \Mod_B \colon \Res_{B/A}(-) \ , \]
	given by extension and restriction of scalars induces an adjunction
	\[ B\otimes_A^{\hyp} ( - ) \colon \Shhyp(X;\Mod_A) \leftrightarrows \Shhyp(X;\Mod_B) \colon \Res_{B/A}^{\hyp} \ . \]
	The projection formula for modules immediately implies that for every $M\in Mod_B$ and every $F\in \Shhyp(X;\Mod_A)$,  the canonical morphism
	\begin{equation}\label{eq:projection_formula}
		\Res_{B/A}(M)\otimes_A^{\hyp}  F  \to \Res^{\hyp}_{B/A}(M\otimes_B^{\hyp}  (B\otimes_A^{\hyp}  F))
	\end{equation}
	is an equivalence.
\end{eg}


\subsection{Atomic generation and exodromic stratified spaces}\label{subsec:strat_spaces}

Following ideas of Clausen--Jansen \cite{Clausen_Jansen}, in \cite{Beyond_conicality} we introduced the special class of \emph{exodromic} stratified spaces that we recall below, together with their main properties and examples.

\begin{definition}[Atomic generation]
	Let $\cC \in \PrL$.
	\begin{enumerate}\itemsep=0.2cm
		\item An object $c \in \cC$ is \emph{atomic} (or \emph{absolutely compact}) if
		\[ \Map_\cC(c,-) \colon \cC \to \Spc \]
		commutes with all colimits.
		We write $\cC^{\at}$ for the full subcategory of $\cC$ spanned by atomic objects.
		
		\item We say that $\cC$ is \emph{atomically generated} if the unique colimit-preserving extension $\PSh(\cC^{\at}) \to \cC$ of the inclusion $ \cC^{\at} \hookrightarrow \cC $  is an equivalence.
	\end{enumerate}
\end{definition}

\begin{definition}[{Exodromic spaces, cf. \cite[Definition 3.5]{Beyond_conicality}}]\label{def:exodromic_stratified_space}
	A stratified space $ s \colon X\to P $ is \textit{exodromic} if the following conditions are satisfied:
	\begin{enumerate}\itemsep=0.2cm
		\item The $\infty$-category $ \ConsPhyp(X) $ is atomically generated.
		
		\item The subcategory $\ConsPhyp(X) \subset \Shhyp(X) $ is closed under both limits and colimits.
		
		\item The functor $s^{\ast,\hyp} \colon \Fun(P,\Spc) \simeq \Shhyp(P) \to \ConsPhyp(X)$ preserves limits.
	\end{enumerate}
	In this case, we define the \emph{$\infty$-category of exit-paths} of $(X,P)$ as
	\[ \Piinfty(X,P) \coloneqq (\ConsPhyp(X)^{\at})\op \ . \]
\end{definition}

Condition (2) should be thought as a \emph{categorical} regularity condition on the stratification, that weakens the conicality condition (see \cite[\S0.2]{Beyond_conicality} or \cite[\S III.2.5]{Porta_HDR} for additional comments on this topic).
The main source of examples is:

\begin{eg}[Conical stratifications]\label{eg:conical_stratifications_are_exodromic}
	Let $(X,P)$ be a conically stratified space with locally weakly contractible strata.
	Then $(X,P)$ is exodromic and moreover $\Pi_\infty(X,P)$ is computed by the simplicial model $\Sing_P(X)$ of \cite[Definition A.6.2]{Lurie_Higher_algebra}, see \cite[Theorem 5.17 \& Remark 5.18]{Porta_Teyssier_Exodromy}.
	In particular, if $(V,S)$ is a locally finite simplicial complex, then with the notation of \cref{eg:simplicial_complexes} $(\Delta^{(V,S)},S)$ is exodromic.
	Moreover, using \cite[Theorem A.6.10]{Lurie_Higher_algebra} there is a natural equivalence
	\begin{equation}\label{eq:Pi_infty_simplicial_complex}
		\Piinfty(\Delta^{(V,S)},S) \simeq S \ .
	\end{equation}
\end{eg}

Before stating further examples, let us recall a few important general features of the exodromy equivalence. 

\begin{theorem}[{Exodromy with coefficients, \cite[Corollary 4.1.15]{Beyond_conicality}}]\label{thm:exodromy_with_coefficients}
	Let $(X,P)$ be an exodromic stratified space.
	For every $\cE\in \PrL$, there is a canonical comparison functor
	\[ \Fun(\Piinfty(X,P), \cE) \to \ConsPhyp(X;\cE) \ . \] 
	If $\cE$ compactly assembled or $P$ noetherian and $\cE$ stable, then this functor is an equivalence.
\end{theorem}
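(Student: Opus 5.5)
The comparison functor comes from the universal property of the tensor product in $\PrL$. Since $(X,P)$ is exodromic, $\ConsPhyp(X)$ is atomically generated, so
\[ \ConsPhyp(X) \simeq \PSh(\ConsPhyp(X)^{\at}) \simeq \Fun(\Piinfty(X,P),\Spc) \ . \]
Tensoring with $\cE$ in $\PrL$ gives
\[ \Fun(\Piinfty(X,P),\Spc)\otimes \cE \simeq \Fun(\Piinfty(X,P),\cE) \ , \]
using that $\PSh(\cA)\otimes \cE\simeq \Fun(\cA\op,\cE)$ by \cite[\S4.8.1]{Lurie_Higher_algebra}. Condition (2) of \cref{def:exodromic_stratified_space} says the inclusion $\ConsPhyp(X)\hookrightarrow \HSh(X)$ preserves colimits, so it is a morphism in $\PrL$. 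Tensoring it with $\cE$ yields a colimit-preserving functor
\[ \Fun(\Piinfty(X,P),\cE)\to \HSh(X)\otimes\cE = \HSh(X;\cE) \ . \]
We then check that this functor lands in $\ConsPhyp(X;\cE)$. The source is generated under colimits by objects $c\otimes e$, which map to $F\otimes e$ with $F$ constructible, and these are constructible because $i_p^{\ast,\hyp}$ commutes with coefficient change (\cref{hyp_coeff_change_inverse_image}). For colimits of such objects one needs $\ConsPhyp(X;\cE)$ to be closed under colimits in $\HSh(X;\cE)$; this holds because $i_p^{\ast,\hyp}$ preserves colimits and locally hyperconstant hypersheaves are closed under colimits locally.

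For the equivalence, the plan is to show that the functor $\ConsPhyp(X)\otimes\cE\to\HSh(X)\otimes\cE$ is fully faithful with essential image $\ConsPhyp(X;\cE)$.

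\emph{Compactly assembled case.} Such $\cE$ is a retract in $\PrL$ of a compactly generated $\Ind(\cE_0)$, and $-\otimes\cE$ then preserves fully faithful left adjoints. One reduces to $\cE=\PSh(\cE_0)$, where $-\otimes\cE=\Fun(\cE_0\op,-)$ is computed objectwise, so full faithfulness and the identification of the image can be checked objectwise in $\cE_0\op$. The image is treated as follows. A hypersheaf $G\in\HSh(X;\cE)\simeq \FunR(\HSh(X)\op,\cE)$ is constructible if and only if its stalk-wise restrictions are locally hyperconstant, and condition (3) of \cref{def:exodromic_stratified_space} (limit-preservation of $s^{\ast,\hyp}$) is what makes the right adjoint of the inclusion compatible with $-\otimes\cE$. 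This gives a right adjoint on constructible objects whose counit is an equivalence on the generators.

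\emph{$P$ noetherian and $\cE$ stable.} Here I would argue by noetherian induction on $P$ using recollement. For an open $U\subset P$ and closed complement $Z$, both sides sit in recollements: $\Fun(\Piinfty(X,P),\cE)$ over $\Fun(\Piinfty(X_U),\cE)$ and $\Fun(\Piinfty(X_Z),\cE)$, and $\ConsPhyp(X;\cE)$ over $\ConsPhyp(X_U;\cE)$ and $\ConsPhyp(X_Z;\cE)$, with gluing functors $i^{\ast}j_\ast$. The comparison functor commutes with $j^\ast,i^\ast$ and with $j_\ast$; the last uses condition (3) and stability, which makes $j_\ast$ preserve colimits. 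Since the functor is an equivalence on the pieces, it is an equivalence globally. The base case, a single stratum, is monodromy for locally hyperconstant hypersheaves with coefficients, which is the statement that $\LChyp(X;\cE)\simeq\Fun(\Pi_\infty X,\cE)$ for exodromic (monodromic) $X$. There the same argument applies, with stability replacing compact assembly.

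The main obstacle is proving that the gluing functor $i^\ast j_\ast$ commutes with the comparison, that is, that $j_\ast$ on the constructible side preserves colimits and agrees with right Kan extension along $\Piinfty(X_U)\to\Piinfty(X,P)$. I would attack this through condition (3), writing $j_\ast$ via $s_\ast$ on $\Fun(P,-)$. Noetherianity is needed to make the induction well-founded (every nonempty set of opens has a maximal element), since a single step only splits off one closed piece.
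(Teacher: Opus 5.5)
The paper gives no proof of this statement; it imports it from \cite[Corollary 4.1.15]{Beyond_conicality}. Your argument therefore has to stand on its own, and it has a genuine gap at the step that carries the content: identifying the essential image.

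\textbf{What works.} Constructing the comparison functor as $\iota\otimes\cE$, with $\iota\colon\ConsPhyp(X)\hookrightarrow\HSh(X)$, is the right starting point. Full faithfulness is in fact automatic for \emph{every} $\cE\in\PrL$. By axiom (2) of \cref{def:exodromic_stratified_space}, $\iota$ has a left adjoint $L$ in $\PrL$ with $L\iota\simeq\id$, and $-\otimes\cE$ preserves this adjunction.

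\textbf{The gap.} You argue that the image lies in $\ConsPhyp(X;\cE)$ because ``locally hyperconstant hypersheaves are closed under colimits locally''. This is false, because trivializing covers can shrink along an infinite colimit. On $X=\{0\}\cup\{1/n\}_{n\geqslant 1}$, each skyscraper at $1/n$ with value $\ast$ is locally constant. Their coproduct, however, has empty stalk at $0$ and nonempty stalks at points converging to $0$, so it is not locally constant. Ruling this out for $\cE=\Spc$ is exactly what axiom (2) does, and nothing in your argument supplies its analogue for general $\cE$. What is missing is a trivializing cover of each stratum that works uniformly for all objects. For instance, when the strata are locally weakly contractible (the case used in this paper), take weakly contractible opens $U\subset X_p$. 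Then $i_p^{\ast,\hyp}\iota$ restricted to $U$ factors in $\PrL$ through $\Gamma_U^{\ast,\hyp}\colon\Spc\to\HSh(U)$, and this factorization survives $-\otimes\cE$.

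\textbf{Compactly assembled case.} This case of your plan does not supply the missing uniformity.
\begin{itemize}
\item The reduction to $\cE=\PSh(\cE_0)$ is impossible as soon as $\cE$ is stable and nonzero, e.g.\ $\cE=\Mod_k$. A colimit-preserving $s\colon\cE\to\PSh(\cA)$ sends $0$ to $\emptyset$, and every $s(E)$ maps to $s(0)=\emptyset$. Since initial objects in presheaf categories are strict, $s$ is constant at $\emptyset$, so $\cE$ cannot be a retract of $\PSh(\cA)$ in $\PrL$.
\item The correct model is $\cC\otimes\Ind(\cE_0)\simeq\Fun^{\mathrm{lex}}(\cE_0\op,\cC)$. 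There $f\otimes\cE$ is postcomposition for left exact $f$, which covers $\iota$, $i_p^{\ast,\hyp}$, restrictions, and $\Gamma_U^{\ast,\hyp}$.
\item In this model one inclusion is trivial: a trivializing cover for $i_p^{\ast,\hyp}G$ trivializes every $i_p^{\ast,\hyp}G(e)$.
\item The other inclusion, that $G$ is hyperconstructible when each $G(e)$ is, is precisely the uniformity problem above.
\item Axiom (3) concerns $s^{\ast,\hyp}$, not any adjoint of $\iota$, so it has nothing to say here.
\end{itemize}

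\textbf{Noetherian stable case.} This case does not supply it either.
\begin{itemize}
\item The base case $P=\ast$ is not a base case at all. It is the whole theorem for a trivially stratified space with stable, non compactly assembled coefficients, and ``stability replacing compact assembly'' does not prove it.
\item The inductive step needs $j_\ast$ to preserve colimits so that it commutes with $-\otimes\cE$, and this fails in general. Take the cone on an infinite discrete set, which is conical, hence exodromic, with $P=\{0<1\}$. Then $\Piinfty\simeq\mathbb N^{\lhd}$, and the stalk of $j_\ast G$ at the cone point is $\prod_n G_n$, which does not commute with filtered colimits in $\Mod_k$.
\item A workable gluing must instead use the colimit-preserving functor from the closed part to the open part, namely left Kan extension along exit paths.
\item Noetherianity as used in this paper means every nonempty subset of $P$ has a maximal element (see the proof of \cref{coconnectivity_and_strata}). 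It does not give maximal elements among families of opens, since an infinite antichain is noetherian.
\item The induction must therefore run over elements of $P$, using descent along the covers $\{X_{P_{\geqslant q}}\}_{q>p}$ on both sides.
\end{itemize}
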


\begin{theorem}[{Functoriality, \cite[Theorem 5.1.7-(3)]{Beyond_conicality}}]\label{thm:exodromy_functoriality}
	Let $f \colon (X,P) \to (Y,Q)$ be a morphism between exodromic stratified spaces and let $\cE $ be a compactly assembled presetnable $ \infty $-category.
	Then the functor $f^{\ast,\hyp}$ of \eqref{eq:pullback_functoriality} admits a left adjoint
	\[ f_\sharp^{\cons} \colon \ConsPhyp(X) \to \ConsQhyp(Y) \ , \]
	which furthermore restricts to a functor $\Pi_\infty(f) \colon \Pi_\infty(X,P) \to \Piinfty(Y,Q)$.
	Moreover for every $\cE$ as in \cref{thm:exodromy_with_coefficients}, the diagram
	\begin{equation}\label{exodromic_morphism_square}
		\begin{tikzcd}
			\ConsQhyp(Y;\cE) \arrow[r,"\sim"{yshift=-0.25em}] \arrow[d, "f^*"'] & \Fun(\Piinfty(Y,Q), \cE) \arrow[d, "- \circ \Piinfty(f)"] \\
			\ConsPhyp(X;\cE) \arrow[r,"\sim"{yshift=-0.25em}] & \Fun(\Piinfty(X,P), \cE)
		\end{tikzcd} 
	\end{equation}
	commutes.
\end{theorem}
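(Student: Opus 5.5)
The plan is to establish the statement first for $\cE = \Spc$, then obtain the version with coefficients by tensoring in $\PrL$.

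\emph{Existence of $f_\sharp^{\cons}$.} Both $\ConsPhyp(X)$ and $\ConsQhyp(Y)$ are presentable; by atomic generation they are presheaf $\infty$-categories. By the adjoint functor theorem, it suffices to show that $f^{\ast,\hyp}\colon \ConsQhyp(Y)\to\ConsPhyp(X)$ preserves limits (and is accessible). Accessibility is automatic, and in fact more holds. On hypersheaves $f^{\ast,\hyp}$ is a left adjoint, hence preserves colimits. By condition (2) of \cref{def:exodromic_stratified_space}, colimits in $\ConsQhyp(Y)$ and $\ConsPhyp(X)$ are computed in $\Shhyp$, so $f^{\ast,\hyp}$ preserves colimits of hyperconstructible hypersheaves.

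\emph{Limit preservation.} This is where I expect the main difficulty, since $f^{\ast,\hyp}$ does not preserve limits on all hypersheaves. I would test against the jointly conservative, limit-preserving family $\Map(c,-)$ for $c\in\ConsPhyp(X)^{\at}$, reducing to the strata.
\begin{itemize}
	\item Since limits in $\ConsPhyp(X)$ are computed in $\Shhyp(X)$ by (2), and $i_p^{\ast,\hyp}$ for strata should likewise commute with limits of constructible objects (by the same exodromic argument applied to the induced stratification), I would reduce to the case of a map of strata $X_p\to Y_q$.
	\item For such a map the objects involved are locally hyperconstant. Condition (3), that $s^{\ast,\hyp}$ preserves limits, together with the compatibility $f^{\ast,\hyp}s_Y^{\ast,\hyp}\simeq s_X^{\ast,\hyp}(-\circ f)$ with the maps of posets, should let one transport limit preservation from $\Fun(Q,\Spc)\to\Fun(P,\Spc)$.
\end{itemize}
If this direct route fails, I would try to identify $f^{\ast,\hyp}$ with restriction along a functor of exit-path categories using the explicit generators coming from (3), which would make limit preservation automatic.

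\emph{Restriction to exit paths.} Once $f_\sharp^{\cons}\dashv f^{\ast,\hyp}$ exists, note that the right adjoint $f^{\ast,\hyp}$ preserves all colimits. Hence for $c$ atomic the functor
\[ \Map(f_\sharp^{\cons}c,-)\simeq \Map(c,f^{\ast,\hyp}(-)) \]
commutes with colimits. So $f_\sharp^{\cons}$ carries atomics to atomics, and taking opposites yields $\Piinfty(f)\colon\Piinfty(X,P)\to\Piinfty(Y,Q)$.

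\emph{The commutative square.} Under $\ConsPhyp(X)\simeq\Fun(\Piinfty(X,P),\Spc)$, an object $F$ corresponds to $c\mapsto\Map(c,F)$. The same adjunction identity shows that $f^{\ast,\hyp}F$ corresponds to $\Map(-,F)\circ\Piinfty(f)$. This gives \eqref{exodromic_morphism_square} for $\cE=\Spc$. For a general $\cE$ as in \cref{thm:exodromy_with_coefficients}, I would tensor with $\cE$ in $\PrL$. The square of left adjoints tensors to a square whose vertical maps are $f^{\ast,\hyp}\otimes\id_\cE$ and $(-\circ\Piinfty(f))\otimes\id_\cE$. This works provided the identifications $\ConsPhyp(X;\cE)\simeq\ConsPhyp(X)\otimes\cE$ and $\Fun(\Piinfty,\cE)\simeq\Fun(\Piinfty,\Spc)\otimes\cE$ are compatible with pullback, which follows from \cref{hyp_coeff_change_inverse_image} and the construction of the comparison functor in \cref{thm:exodromy_with_coefficients}.
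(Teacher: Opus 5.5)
\textbf{There is a genuine gap: you never show that $f^{\ast,\hyp}$ preserves limits on constructible objects.} The parts of your argument that come after the adjunction exists are fine. Atomics are preserved because $f^{\ast,\hyp}$ preserves colimits of constructible objects by condition (2). The identity $\Map(c,f^{\ast,\hyp}F)\simeq\Map(f_\sharp^{\cons}c,F)$ gives the square for $\Spc$. Tensoring in $\PrL$ handles coefficients. The gap is the step you flag yourself: that $f^{\ast,\hyp}\colon\ConsQhyp(Y)\to\ConsPhyp(X)$ preserves limits. Your sketched route for it cannot be completed.
\begin{enumerate}
\item Testing against $\Map(c,-)$ for atomic $c$ only restates the problem, since computing $\Map(c,f^{\ast,\hyp}F)$ is exactly what a left adjoint would do.
\item The reduction to strata assumes two things: that $(X_p,\{p\})$ is exodromic, and that $i_p^{\ast,\hyp}$ preserves limits of hyperconstructible hypersheaves. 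But $i_p$ is itself a morphism of stratified spaces, so the second is a special case of the claim. Moreover, $i_p^{\ast,\hyp}$ does not preserve limits on $\Shhyp(X)$.
\item Once you are on a stratum, condition (3) of \cref{def:exodromic_stratified_space} carries no information. For a one-point poset, (1) and (2) already identify $\Gamma^{\ast,\hyp}\colon\Spc\to\LChyp(X_p)$ with the constant-diagram functor, which always preserves limits. In general (3) only controls hypersheaves pulled back from $P$, and says nothing about non-constant local systems.
\item Your fallback is circular. A colimit-preserving functor $\Fun(\Piinfty(Y,Q),\Spc)\to\Fun(\Piinfty(X,P),\Spc)$ is restriction along a functor of exit-path $\infty$-categories exactly when it also preserves limits. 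Condition (3) supplies no generators.
\end{enumerate}

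\textbf{The missing idea is to test on stalks rather than on atomic objects.}
\begin{itemize}
\item[$\bullet$] Limits in $\ConsPhyp(X)$ are computed in $\Shhyp(X)$.
\item[$\bullet$] Stalk functors are jointly conservative on hypersheaves on a topological space.
\item[$\bullet$] $x^{\ast}\circ f^{\ast,\hyp}\simeq f(x)^{\ast}$.
\end{itemize}
So it suffices to know that, for every exodromic stratified space and every point $x$, the stalk functor $x^{\ast}\colon\ConsPhyp(X)\to\Spc$ preserves limits, i.e.\ is corepresented by an atomic object $[x]$. This is a real additional input that your argument does not supply. It is what \cref{obs:objects_of_Exit_with_locally_weakly_contractible_strata} records, and there it is stated under the hypothesis of locally weakly contractible strata.

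Abstractly, $x^{\ast}$ is a point of the presheaf $\infty$-topos $\Fun(\Piinfty(X,P),\Spc)$, hence an ind-object $(c_i)$ of $\Piinfty(X,P)$. This is where condition (3) genuinely enters. It identifies $s^{\ast,\hyp}$ with restriction along $\pi\colon\Piinfty(X,P)\to P$. The equality $x^{\ast}s^{\ast,\hyp}\simeq\mathrm{ev}_p$ then forces $\pi(c_i)=p$ eventually. If in addition the fibre $\Piinfty(X,P)\times_P\{p\}$ is an $\infty$-groupoid, the filtered system is essentially constant. Then $x^{\ast}$ is evaluation at an object and preserves limits.
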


\begin{recollection}[Objects of $\Piinfty(X,P)$]\label{obs:objects_of_Exit_with_locally_weakly_contractible_strata}
	Let $ (X,P) $ be an exodromic stratified space with locally weakly contractible strata.
	There is a natural identification
	\begin{equation*}
		\Piinfty(X,P)^{\simeq} \simeq \coprod_{p \in P} \Piinfty(X_p) \ ,
	\end{equation*}
	where $ \Piinfty(X_p) $ denotes the underlying homotopy type of the stratum $ X_p $.
	See \cite[Observation 5.1.6]{Beyond_conicality}.
	Thus, every point $x \in X$ gives rise to an object
	\[ [x] \coloneqq x_\sharp^{\cons}(\ast) \in \ConsPhyp(X) \ , \]
	and every object of $\Piinfty(X,P)$ is of this form.
	Notice that, by design, $x_\sharp^{\cons}(\ast)$ corepresents the stalk functor
	\begin{equation*}
		x^\ast \colon \ConsPhyp(X) \to \Spc
	\end{equation*}
	On the other hand, different points of $X$ might give rise to the same object of $\Piinfty(X,P)$.
\end{recollection}

\begin{lem}\label{compact_generators_Cons}
    Let $ (X,P) $ be an exodromic stratified space with locally weakly contractible strata and let $\cE $ be a compactly generated presentable \category.
    Let $(e_{\alpha})_{\alpha \in A}$ be a system of compact generators for $\cE$.
    Then, the the objects $(x_{\sharp}^{\cons}(e_{\alpha}))_{x\in X, \alpha \in A}$ form a system of compact generators for \smash{$\ConsPhyp(X,\cE)$}.
\end{lem}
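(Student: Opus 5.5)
The plan is to reduce the statement to exodromy with coefficients and then read off compact generators of a functor category. Compactly generated $\infty$-categories are in particular compactly assembled, so \cref{thm:exodromy_with_coefficients} applies and gives an equivalence
\[ \ConsPhyp(X;\cE) \simeq \Fun(\Piinfty(X,P),\cE) \ . \]
For a small $\infty$-category $I$ and compactly generated $\cE$ with generators $(e_\alpha)$, the category $\Fun(I,\cE)$ is compactly generated by the left Kan extensions $i_\sharp(e_\alpha) \simeq \Map_I(i,-)\otimes e_\alpha$ along the inclusions $\{i\}\hookrightarrow I$. Two facts give this. First, $\Map(i_\sharp e_\alpha, G) \simeq \Map_\cE(e_\alpha, G(i))$, and evaluation at $i$ commutes with filtered colimits in $\Fun(I,\cE)$, so $i_\sharp e_\alpha$ is compact. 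Second, if all these mapping spaces vanish then every $G(i)$ is zero or terminal, so the family detects equivalences, i.e.\ it generates. Since the family jointly detects equivalences, $i$ only needs to range over a set of objects meeting every isomorphism class of $I$.

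Next I identify these generators with $x_\sharp^{\cons}(e_\alpha)$. By \cref{obs:objects_of_Exit_with_locally_weakly_contractible_strata}, every object of $\Piinfty(X,P)$ is of the form $[x]$ for some $x\in X$, so the points of $X$ exhaust the isomorphism classes. It then remains to check that under the equivalence, $x_\sharp^{\cons}(e_\alpha)$ corresponds to the left Kan extension of $e_\alpha$ from the object $[x]$. Here $x_\sharp^{\cons}$ means the left adjoint to the stalk functor $x^{\ast,\hyp} \colon \ConsPhyp(X;\cE)\to\cE$ with coefficients, supplied by \cref{thm:exodromy_functoriality} applied to $x \colon (\ast,\ast)\to(X,P)$. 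Diagram \eqref{exodromic_morphism_square} for this map says that $x^\ast$ corresponds to evaluation at $\Piinfty(x)(\ast) = [x]$. Passing to left adjoints, $x_\sharp^{\cons}$ corresponds to $[x]_\sharp$, the left Kan extension along $\{[x]\}\hookrightarrow \Piinfty(X,P)$.

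The main technical point is this last step. \cref{thm:exodromy_functoriality} states the existence of $f^{\cons}_\sharp$ only for $\Spc$-coefficients, so one must make sense of $x_\sharp^{\cons}$ with coefficients in $\cE$. I would simply define it as the left adjoint of $x^{\ast}$ on $\ConsPhyp(X;\cE)$. Its existence follows from the commutative square, since evaluation at $[x]$ has a left adjoint given by left Kan extension. Alternatively, one can write it as $[x]\otimes e$ under $\ConsPhyp(X;\cE)\simeq \ConsPhyp(X)\otimes\cE$, which agrees with the $\Spc$-valued object $[x]$ tensored with $e_\alpha$. Once this identification is in place, compactness and generation follow from the functor-category argument above.
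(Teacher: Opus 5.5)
Your proposal is correct and is essentially the paper's argument. The paper likewise transports the question along the exodromy equivalence, uses the square \eqref{exodromic_morphism_square} for $x\colon(\ast,\ast)\to(X,P)$ to identify $x_\sharp^{\cons}$ with left Kan extension from $[x]$, and concludes with the compact generation of functor categories (Higher Algebra, Lemma 5.3.2.11-(1)), exactly as in Porta--Teyssier's Theorem 6.4.1. One step needs tidying: for non-stable $\cE$, ``the mapping spaces vanish'' is not the right test for generation, so argue directly that the functors $\Map(e_\alpha,\mathrm{ev}_{[x]}(-))$ are jointly conservative, because the $\Map(e_\alpha,-)$ are jointly conservative on $\cE$ and equivalences in $\Fun(\Piinfty(X,P),\cE)$ are detected pointwise.
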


\begin{proof}
      Same proof as in \cite[Theorem 6.4.1]{Porta_Teyssier_Exodromy} using \eqref{exodromic_morphism_square} and \cite[5.3.2.11-(1)]{Lurie_Higher_algebra}.
\end{proof}

Exodromic stratifications allow to go beyond conical examples, mainly thanks to:

\begin{theorem}[{Stability under coarsening, \cite[Theorem 3.3.5]{Beyond_conicality}}]\label{thm:stability_coarsening}
	Let $ (X,R) $ be an exodromic stratified space, and let $ \phi \colon R\to P $ be a map of posets.
	Then:
	\begin{enumerate}\itemsep=0.2cm
		\item\label{thm:stability_under_coarsening.1} The stratified space $ (X,P) $ is exodromic.
		
		\item\label{thm:stability_under_coarsening.2} The natural functor $ \Piinfty(X,R)\to \Piinfty(X,P) $ induces an equivalence 
		\[ \Piinfty(X,R)[W_P\inv]\simeq \Piinfty(X,P) \ , \]
		where $ W_P \subset \Mor(\Piinfty(X,R)) $ is the collection of morphisms sent to equivalences by the composite $\Piinfty(X,R) \to R \to P $.
	\end{enumerate}
\end{theorem}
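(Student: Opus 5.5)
The plan is to identify $\ConsPhyp(X)$ with an explicit full subcategory of $\ConsRhyp(X) \simeq \Fun(\Piinfty(X,R),\Spc)$, and to read off both assertions from that identification. Throughout, write $\cC \coloneqq \Piinfty(X,R)$ and $W \coloneqq W_P$.

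\emph{Step 1: $\ConsPhyp(X) \subset \ConsRhyp(X)$.} Let $F$ be $P$-hyperconstructible and let $r \in R$ with $\phi(r) = p$. The stratum $X_r$ is contained in $X_p$. Pulling back a locally hyperconstant hypersheaf along $X_r \hookrightarrow X_p$ gives a locally hyperconstant hypersheaf, since hyperconstancy is preserved by $(-)^{\ast,\hyp}$. Hence $i_r^{\ast,\hyp}F$ is locally hyperconstant, and $F$ is $R$-hyperconstructible.

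\emph{Step 2: $F \in \ConsRhyp(X)$ is $P$-hyperconstructible if and only if its exodromic functor $\cC \to \Spc$ inverts $W$.} Fix $p$ and set $R_p \coloneqq \phi^{-1}(p)$. Then $R_p$ is a locally closed subposet of $R$, and $(X_p,R_p)$ is exodromic, because restriction to locally closed subposets preserves exodromy. I will use the following facts.
\begin{enumerate}
	\item By \cref{thm:exodromy_functoriality}, $i_p^{\ast,\hyp}F$ corresponds to the restriction of $F$ along $\Piinfty(X_p,R_p) \to \cC$.
	\item This functor is fully faithful with image the fiber of $\cC \to P$ over $p$, which is the morphisms of $W$ lying over $p$.
	\item An $R_p$-constructible hypersheaf on $X_p$ is locally hyperconstant exactly when its functor inverts every morphism of $\Piinfty(X_p,R_p)$. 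This is exodromy for the trivially stratified space: on a $P$-stratum, the locally hyperconstant objects are the functors out of the groupoid completion.
\end{enumerate}
Since every morphism of $W$ lies over a single $p$, these facts give the claim. I expect the fully faithfulness statement in (2) to be the main obstacle, since it is a locality property of exit paths. I would first try to deduce it from the description of atomic objects $[x]$ in \cref{obs:objects_of_Exit_with_locally_weakly_contractible_strata}, together with the fact that $i_{p,\ast}$ preserves constructibility and colimits in the exodromic setting.

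\emph{Step 3: conclude exodromy of $(X,P)$ and part (2).} By Step 2 and the universal property of localization,
\[ \ConsPhyp(X) \simeq \Fun(\cC[W\inv],\Spc) \subset \Fun(\cC,\Spc) \ . \]
The condition of inverting $W$ is stable under all limits and colimits. Since $\ConsRhyp(X)$ is closed under limits and colimits in $\Shhyp(X)$, so is $\ConsPhyp(X)$, which is condition (2) of \cref{def:exodromic_stratified_space}. A presheaf category is atomically generated, which gives condition (1); its atoms are the retracts of representables. For condition (3), $s_P^{\ast,\hyp}$ factors as $\phi^\ast \colon \Fun(P,\Spc) \to \Fun(R,\Spc)$ followed by $s_R^{\ast,\hyp}$. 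Both functors preserve limits, and the composite lands in $\ConsPhyp(X)$.

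It remains to check that $\Piinfty(X,P)$, defined as the opposite of the atoms, is $\cC[W\inv]$ itself and not its idempotent completion. For this I would use that every atom is of the form $[x] = x^{\cons}_\sharp(\ast)$, again by \cref{obs:objects_of_Exit_with_locally_weakly_contractible_strata}. These are the images of representables, so no nontrivial retracts occur, and the canonical functor $\cC[W\inv] \to \Piinfty(X,P)$ is an equivalence.
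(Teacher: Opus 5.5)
The paper does not reprove this statement; it is quoted from \cite{Beyond_conicality}. So I assess your argument on its own. The skeleton is reasonable, and Step 1 and the exodromy verifications in Step 3 (closure under limits and colimits, atomic generation, transport of axiom (3)) are fine once Step 2 is granted.

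The genuine gap is fact (3) of Step 2. It carries essentially all of the content and is circular as stated. You claim that $\LChyp(X_p)\subset\Cons^{\hyp}_{R_p}(X_p)\simeq\Fun(\Piinfty(X_p,R_p),\Spc)$ is exactly the full subcategory of functors inverting every morphism. That is precisely the case $R_p\to\ast$ of the theorem you are proving, applied to the exodromic space $(X_p,R_p)$.

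``Exodromy for the trivially stratified space'' is not an available input. Nothing says $(X_p,\ast)$ is exodromic; its condition (2), closure of $\LChyp(X_p)$ under limits in $\HSh(X_p)$, is exactly the nontrivial point. Even granting a monodromy equivalence for $X_p$, you would still have to show that the inclusion $\LChyp(X_p)\hookrightarrow\Cons^{\hyp}_{R_p}(X_p)$ is restriction along a functor exhibiting $\Piinfty(X_p)$ as the groupoid completion of $\Piinfty(X_p,R_p)$.

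A symptom is that axiom (3) of \cref{def:exodromic_stratified_space} is never used in Step 2. That axiom is what identifies $\Gamma^{\ast,\hyp}_{X_p}$ with the constant-diagram functor: its left adjoint preserves atoms, so it sends every corepresentable to the point. But this only controls \emph{hyperconstant} hypersheaves. Nonconstant local systems on $|\Piinfty(X_p,R_p)|$ cannot be obtained from constant functors by limits and colimits, since these are computed pointwise. So both ``locally hyperconstant $\Rightarrow$ inverts all morphisms'' and its converse need a genuine locality argument, which the proposal does not supply.

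Two further problems are repairable.

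First, the theorem has no hypothesis on strata. Yet your plan for fact (2) and your idempotent-completeness step both invoke \cref{obs:objects_of_Exit_with_locally_weakly_contractible_strata}, which requires $(X,P)$, not just $(X,R)$, to have locally weakly contractible strata. This is not given; the paper cites a separate lemma of \cite{Beyond_conicality} for it even when coarsening conical stratifications. Also, $i_{p,\ast}$ does not preserve colimits of $\Spc$-valued hypersheaves; already for a closed immersion it fails to preserve the initial object. Fact (2) should instead be taken from the stability of exodromy under passage to locally closed subposets, which gives $\Piinfty(X_S,S)\simeq\Piinfty(X,R)\times_R S$.

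Second, idempotent completeness of $\cC[W\inv]$ has a hypothesis-free proof. In the homotopy category, a morphism of $\cC[W\inv]$ is a zigzag whose forward arrows go up in $P$ and whose backward arrows lie in $W$. So a zigzag between objects over the same $p$ lies entirely in $W$. Hence $\cC[W\inv]\to P$ is conservative, so $\cC[W\inv]$ is layered, and layered $\infty$-categories are idempotent complete. Your argument via the atoms $[x]$ does work under the extra hypothesis on strata, but not in the stated generality.
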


\begin{eg}[Subanalytic stratifications]\label{eg:subanalytic_stratifications}
	Let $(M,X,P)$ be a subanalytic stratified space, in the sense of \cref{def:subanalytic_stratified_space}.
	By \cite{Verdier1976}, the stratified space $(X,P)$ admits a refinement by a locally finite triangulation.
	Combining \cref{eg:conical_stratifications_are_exodromic} and \cref{thm:stability_coarsening}, it follows that $(X,P)$ is exodromic. See \cite[Theorem 5.3.9]{Beyond_conicality}.
\end{eg}

\begin{eg}[Algebraic stratifications]
	Let $(X,P)$ be an algebraic stratified space, in the sense of \cref{def:algebraic_stratified_space}.
	Then $(X,P)$ is exodromic, see \cite[Theorem 5.3.13]{Beyond_conicality}: this is essentially a consequence of the previous example together with the van Kampen theorem for exodromic spaces, see \cite[Theorem 5.1.7-(4)]{Beyond_conicality}.
\end{eg}


\subsection{Finiteness conditions}

Fundamental to the construction of moduli spaces are the following finiteness conditions on exodromic stratified spaces:

\begin{defin}\label{categorically_compact}
	For an exodromic stratified space $(X,P)$, we say that:
	\begin{enumerate}\itemsep=0.2cm
		\item $(X,P)$ is \emph{categorically compact} if $\Piinfty(X,P)$ is a compact object in $\Cat_\infty$ ;

		\item $(X,P)$ is \emph{locally categorically compact} if $X$ admits a fundamental system of open subsets $U$ such that $(U,P)$ is exodromic and categorically compact.
		\end{enumerate}
\end{defin}		

\begin{eg}[Simplicial complexes]\label{finite_geometric_realization_simplicial_complex}
	Let $(V,S)$ be a locally finite simplicial complex.
	Equation \eqref{eq:Pi_infty_simplicial_complex} shows that the exodromic stratified space $(\Delta^{(V,S)}, S)$ is locally categorically compact and that it is categorically compact if and only if the set $S$ is finite.
\end{eg}

The following are the two main finiteness results proven in \cite{Beyond_conicality}. They generalize the classical finiteness theorems of homotopy groups of Lefschetz--Whitehead, Łojasiewicz, and Hironaka.
They are the conceptual basis for the rest of this paper.

\begin{theorem}[Subanalytic finiteness]\label{thm:subanalytic_stratified_spaces_are_conically_refineable}
	Let $ (M,X,P) $ be a subanalytic stratified space.
	Then:
	\begin{enumerate}\itemsep=0.2cm
		\item\label{thm:subanalytic_stratified_spaces_are_conically_refineable.1} The stratified space $ (X,P) $ admits a refinement by a locally finite triangulation. 
		
		\item\label{thm:subanalytic_stratified_spaces_are_conically_refineable.2} The stratified space $ (X,P) $ is  exodromic with locally weakly contractible strata.
		
		\item\label{thm:subanalytic_stratified_spaces_are_conically_refineable.3} The stratified space $ (X,P) $ is locally categorically compact. 
		
		\item\label{thm:subanalytic_stratified_spaces_are_conically_refineable.4} If $ X $ is compact, then $ (X,P) $ admits a refinement by a \emph{finite} triangulation.
		Hence $ (X,P) $ is categorically compact.
	\end{enumerate}
\end{theorem}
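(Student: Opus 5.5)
We prove the four items in order, each building on the previous one.

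For \eqref{thm:subanalytic_stratified_spaces_are_conically_refineable.1}, we invoke the subanalytic triangulation theorem (Łojasiewicz, Hironaka, Verdier \cite{Verdier1976}). Since $P$ is locally finite, the family of strata $\{X_p\}_{p\in P}$ is a locally finite family of subanalytic subsets of $M$ contained in the locally closed subanalytic set $X$. The triangulation theorem provides a locally finite simplicial complex $(V,S)$ and a homeomorphism $h \colon \Delta^{(V,S)} \to X$ such that each stratum $X_p$ is a union of images of open simplices. Since the open simplices are connected, the order relation $p\leqslant q$ is determined by closures, and closures of open simplices are faces. The assignment sending a simplex to the stratum containing its interior is therefore a map of posets $\phi \colon S\to P$. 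This exhibits $(X,S)$ as a refinement of $(X,P)$.

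For \eqref{thm:subanalytic_stratified_spaces_are_conically_refineable.2}, \cref{eg:conical_stratifications_are_exodromic} shows that $(\Delta^{(V,S)},S)$ is exodromic. \Cref{thm:stability_coarsening}\eqref{thm:stability_under_coarsening.1} applied to $\phi$ then shows that $(X,P)$ is exodromic. Local weak contractibility of the strata is a separate matter. Each point of a stratum $X_p$ has arbitrarily small neighbourhoods in $X_p$ that are subanalytic, and triangulating them (after shrinking to open stars in a subdivided triangulation adapted to $X_p$ and the point) gives contractible open star neighbourhoods. We therefore take a fundamental system of open stars of the point in iterated barycentric subdivisions of a triangulation compatible with $X_p$. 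Such a star is contractible and its intersection with $X_p$ is the open star in the subcomplex, which is also contractible.

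For \eqref{thm:subanalytic_stratified_spaces_are_conically_refineable.3}, we take the fundamental system of open stars $U=\mathrm{St}(v)$ of vertices $v$ in iterated subdivisions of the triangulation from \eqref{thm:subanalytic_stratified_spaces_are_conically_refineable.1}. Each such $U$ is the realization-type stratified space attached to the finite poset $S_U$ of simplices containing $v$, restricted to the open star. It is conical with contractible strata, so \cref{eg:conical_stratifications_are_exodromic} gives $\Piinfty(U,S_U)\simeq S_U$, which is a finite poset and hence compact in $\Cat_\infty$. By \cref{thm:stability_coarsening}\eqref{thm:stability_under_coarsening.2}, $\Piinfty(U,P)\simeq S_U[W_P\inv]$. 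A localization of a compact $\infty$-category at a set of morphisms is a pushout of compact objects (along $\coprod \Delta^1\to\coprod\Delta^0$, with finitely many morphisms since $S_U$ is finite), hence compact. So $(U,P)$ is exodromic and categorically compact.

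For \eqref{thm:subanalytic_stratified_spaces_are_conically_refineable.4}, when $X$ is compact a locally finite triangulation has only finitely many simplices, so $S$ is finite. By \cref{finite_geometric_realization_simplicial_complex}, $(X,S)$ is categorically compact. The same localization argument then gives $\Piinfty(X,P)\simeq S[W_P\inv]$, which is compact.

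The main obstacle is \eqref{thm:subanalytic_stratified_spaces_are_conically_refineable.1}. It requires citing the triangulation theorem in the precise form that handles locally closed (rather than closed) $X$ together with a locally finite family of subanalytic subsets. It also requires checking that the induced map $S\to P$ is order-preserving, that is, continuous for the upward-closed topology. The compactness of localizations in \eqref{thm:subanalytic_stratified_spaces_are_conically_refineable.3} and \eqref{thm:subanalytic_stratified_spaces_are_conically_refineable.4} is formal.
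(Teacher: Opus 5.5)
Your proposal is correct and follows essentially the same route as the paper, which recalls this result from \cite{Beyond_conicality}. The route is a subanalytic triangulation refining $P$, then exodromy of the conical simplicial stratification with $\Piinfty \simeq S$, then stability under coarsening; your finite-pushout presentation of $S[W_P\inv]$ is just the finite-poset case of \cref{prop:categorical_finiteness_and_compactness_are_stable_under_coarsening}. The locally-closed issue you flag is harmless: replace $M$ by the open subset $M \smallsetminus (\overline{X} \smallsetminus X)$, in which $X$ is closed and the strata form a locally finite family (they need not be locally finite in $M$ itself), so the usual triangulation theorem makes $X$ a locally finite subcomplex.
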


\begin{theorem}[Algebraic finiteness]\label{thm:algebraic_stratified_spaces_are_conically_refineable_and_categorically_finite}
	Let $ (X,P) $ be an algebraic stratified space.
	Then:
	\begin{enumerate}\itemsep=0.2cm
		\item\label{thm:algebraic_stratified_spaces_are_conically_refineable_and_categorically_finite.1} If $ X $ is affine, $ (X,P) $ admits a categorically compact locally categorically compact conical refinement $ (X,R) $ with locally weakly contractible strata and with $ R $ finite.
		
		\item\label{thm:algebraic_stratified_spaces_are_conically_refineable_and_categorically_finite.3} The stratified space $ (X,P) $ is exodromic  with locally weakly contractible strata.
		
		\item\label{thm:algebraic_stratified_spaces_are_conically_refineable_and_categorically_finite.4} The stratified space $ (X,P) $ is categorically compact and locally categorically compact.
	\end{enumerate}
\end{theorem}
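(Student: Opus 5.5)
The plan is to reduce everything to the semialgebraic triangulation theorem, and then to move from the refinement to $(X,P)$ using \cref{thm:stability_coarsening} and the van Kampen theorem for exodromic spaces \cite[Theorem 5.1.7-(4)]{Beyond_conicality}.

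For \eqref{thm:algebraic_stratified_spaces_are_conically_refineable_and_categorically_finite.1}, assume $X$ is affine.
\begin{enumerate}\itemsep=0.2cm
	\item Embed $X(\R)$ as a closed real algebraic subset of $\R^n$. The strata, being Zariski locally closed, become finitely many semialgebraic subsets.
	\item Since $X$ need not be compact, compactify semialgebraically via $\R^n \subset S^n$, adding a point at infinity. Take the closure $\overline X \subset S^n$, which is compact and semialgebraic.
	\item Apply the Łojasiewicz--Hironaka semialgebraic triangulation theorem to $\overline X$. Choose it compatible with the finitely many strata $X_p$ and with the point at infinity.
	\item This exhibits $X$ as a finite simplicial complex minus a closed subcomplex. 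Hence $X$ is a finite union of open simplices.
\end{enumerate}
Let $R$ be the finite poset of simplices contained in $X$. Then $(X,R)$ is conical with contractible strata, by the same argument as \cref{eg:simplicial_complexes}, where the locally finite case only uses open stars. The argument of \eqref{eq:Pi_infty_simplicial_complex} identifies $\Pi_\infty(X,R)$ with the finite poset $R$, which is compact in $\Cat_\infty$. Small open stars give the fundamental system required for local categorical compactness.

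For \eqref{thm:algebraic_stratified_spaces_are_conically_refineable_and_categorically_finite.3} and \eqref{thm:algebraic_stratified_spaces_are_conically_refineable_and_categorically_finite.4}, cover $X$ by finitely many affine opens $U_1,\dots,U_n$. Since $X$ is separated, every finite intersection $U_I$ is again affine. By the affine case and \cref{thm:stability_coarsening}, each $(U_I,P)$ is exodromic. Moreover,
\[ \Pi_\infty(U_I,P)\simeq R_I[W_P^{-1}] \ , \]
where $R_I$ is a finite poset and $W_P$ is a finite set of morphisms. Such a localization is a pushout of finite diagrams of compact $\infty$-categories, namely $R_I$ along copies of $[1]\to[0]$. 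It is therefore compact in $\Cat_\infty$.

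The van Kampen theorem then shows two things. First, $(X,P)$ is exodromic. Second, $\Pi_\infty(X,P)$ is the colimit of the $\Pi_\infty(U_I,P)$ over the finite poset of nonempty $I \subset \{1,\dots,n\}$. As a finite colimit of compact objects, $\Pi_\infty(X,P)$ is compact.

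Local weak contractibility of the strata follows because semialgebraic sets are locally triangulable, hence locally contractible. For local categorical compactness, take open stars $U$ of simplices in the affine triangulations. Each $(U,P)$ is exodromic by coarsening, and $\Pi_\infty(U,P)$ is a localization of a finite poset, hence compact.

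The main obstacle I expect is the triangulation step in \eqref{thm:algebraic_stratified_spaces_are_conically_refineable_and_categorically_finite.1}. One must check that removing the subcomplex at infinity still yields a conical stratification with $R$ finite. One must also check that its exit-path category is exactly the poset $R$, which should follow by an open-star argument. A secondary point is verifying that the van Kampen statement applies to finite affine covers with exactly the colimit shape used above.
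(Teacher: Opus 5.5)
Your proposal is correct and follows essentially the route the paper points to (via its cited source). That route is a finite semialgebraic triangulation of a compactification in the affine case, then \cref{thm:stability_coarsening} and \cref{prop:categorical_finiteness_and_compactness_are_stable_under_coarsening} to coarsen from $R$ to $P$, and finally the van Kampen theorem along a finite affine cover closed under intersections, with compactness preserved by finite colimits.

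The one step to tighten is that open stars in a fixed triangulation do not form a fundamental system of neighbourhoods. You should instead use open stars in iterated barycentric subdivisions, or shrunken conical charts $B\times C_\varepsilon(\mathrm{lk}\,\sigma)$; before coarsening, these still have finite-poset exit-path categories.
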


\begin{rem}
	In the conical setting, stronger finiteness results have been obtained by Volpe in \cite[Theorem 1.5 \& Corollary 1.6]{Volpe_Finiteness}.
\end{rem}

The proofs of \Cref{thm:subanalytic_stratified_spaces_are_conically_refineable,thm:algebraic_stratified_spaces_are_conically_refineable_and_categorically_finite} rely on the following more general stability properties:

\begin{prop}[{\cite[Theorem 3.6.2]{Beyond_conicality}}]\label{lem:pulling_back_to_a_locally_closed_subposet_preserves_categorical_finiteness_and_compactness}
	Let $(X,P)$ be an exodromic stratified space and $S \subset P$ a locally closed subposet.
	If $(X,P)$ is categorically  compact, then so is $(X_S, S)$.
\end{prop}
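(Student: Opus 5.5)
We want to show: if $(X,P)$ is exodromic and categorically compact and $S\subset P$ is locally closed, then $\Piinfty(X_S,S)$ is a compact object of $\Cat_\infty$. The plan is to write $S = U\cap Z$ with $U$ upward closed (open) and $Z$ downward closed (closed), and to treat the two cases separately. The general case then follows by first passing to $Z$ and then to $S = Z\cap U$, which is open in $Z$.

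We begin by recording what $\Piinfty(X_S,S)$ is. It should be the full subcategory of $\Piinfty(X,P)$ spanned by the objects lying over $S$, that is, the fibre product $\Piinfty(X,P)\times_P S$. Since $\Piinfty(X,P)\to P$ is conservative, this is the same as the subcategory of objects over $S$.

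For $U$ open, the objects over $U$ form a cosieve (exit paths only go up). Restriction along the open inclusion $j\colon X_U\hookrightarrow X$ corresponds under \cref{thm:exodromy_functoriality} to restriction along $\Piinfty(X_U,U)\to\Piinfty(X,P)$. The left adjoint $j_\sharp^{\cons}$ is extension by zero on the complement, which is fully faithful on atomics. So $\Piinfty(X_U,U)\simeq \Piinfty(X,P)\times_P U$.

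For $Z$ closed we would use the recollement. The functor $i_Z^{\ast,\hyp}$ corresponds to left Kan extension along the localization/sieve inclusion, and atomics are sent to atomics. This identifies $\Piinfty(X_Z,Z)$ with the full sieve $\Piinfty(X,P)\times_P Z$. I expect these identifications to be essentially standard consequences of \cref{thm:exodromy_functoriality} and the exodromic axioms. However, checking that $i_Z^{\ast,\hyp}$ preserves constructible hypersheaves compatibly with atomics needs care, because exodromy is only assumed for $(X,P)$; a priori we do not know that $(X_S,S)$ is exodromic. I would first prove that $(X_S,S)$ is exodromic by exhibiting $\ConsShyp(X_S)$ as the appropriate (co)localization of $\ConsPhyp(X)$ and transporting atomic generation.

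The key categorical input is then the following. If $\cC\to P$ is a functor with $\cC$ compact in $\Cat_\infty$, and $S\subset P$ is locally closed (with $P$ arbitrary, but only finitely many strata being relevant by compactness), then $\cC\times_P S$ is compact. To show this, write $\cC$ as a retract of a finite cell complex built from the $\Delta^n$ via pushouts along $\partial\Delta^n\to\Delta^n$. Pulling back along $S\subset P$ commutes with colimits in $\Cat_\infty$ over $P$, since pullback along a locally closed subposet inclusion is an exponentiable fibration (a Conduché functor) and hence preserves colimits. It therefore suffices to treat $\Delta^n\times_P S$ for $\Delta^n\to P$. This is the full subcategory of $[n]$ on an interval of the linear order, which is again some $\Delta^m$ or empty, hence compact. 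Finally, retracts pass through the pullback, so $\cC\times_P S$ is a retract of a finite colimit of compact objects and is compact.

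The main obstacle is justifying that $-\times_P S$ preserves colimits in $\Cat_\infty$. I would verify it by checking that $S\hookrightarrow P$ is a Conduché (flat) functor: any factorization of a map in $P$ between objects of $S$ stays in $S$, precisely because $S$ is convex. Then I would invoke the known fact that pullback along exponentiable fibrations preserves colimits.
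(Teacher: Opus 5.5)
Your proposal is correct and follows essentially the route behind the cited result. You identify $\Piinfty(X_S,S)$ with the full subcategory $\Piinfty(X,P)\times_P S$, after checking, as you note, that $(X_S,S)$ is exodromic via the open and closed cases. You then use that the inclusion of a locally closed (convex) subposet is exponentiable, so $-\times_P S$ preserves finite colimits and retracts and sends each simplex over $P$ to a simplex or to $\emptyset$.

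One small slip: under exodromy, $i_Z^{\ast,\hyp}$ corresponds to \emph{restriction} along the sieve inclusion $\Piinfty(X,P)\times_P Z\hookrightarrow\Piinfty(X,P)$. It is its left adjoint, left Kan extension, that preserves atomic objects; this does not affect the argument.
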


\begin{proposition}[{\cite[Theorem 3.6.4]{Beyond_conicality}}]\label{prop:categorical_finiteness_and_compactness_are_stable_under_coarsening}
	Let $(X,R)$ be an exodromic stratified space and let $R \to P$ be a map of posets.
	If $(X,R)$ is categorically compact, then so is $(X,P)$.
\end{proposition}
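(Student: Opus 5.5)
The plan is to reduce the statement to \cref{thm:stability_coarsening} together with \cref{lem:pulling_back_to_a_locally_closed_subposet_preserves_categorical_finiteness_and_compactness}, using only formal closure properties of compact objects in $\Cat_\infty$.

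\textbf{Step 1: describe the localizing class.} Write $\phi \colon R \to P$ and $\cC \coloneqq \Piinfty(X,R)$. By \cref{thm:stability_coarsening}, $(X,P)$ is exodromic and $\Piinfty(X,P) \simeq \cC[W_P\inv]$. Since $P$ is a poset, its only equivalences are identities. Hence $W_P$ consists exactly of the morphisms of $\cC$ lying over an identity of $P$. Thus $W_P$ is the wide subcategory
\[ \cW \coloneqq \cC \times_P P^{\simeq} \simeq \coprod_{p \in P} \cC \times_P \{p\} \ , \]
where the pullback is taken along $\cC \to R \to P$. This subcategory contains all equivalences of $\cC$. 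Localizing at a subcategory containing the equivalences is computed by the pushout in $\Cat_\infty$
\[ \cC[W_P\inv] \simeq \cC \sqcup_{\cW} |\cW| \ , \]
where $|\cW|$ is the classifying space, i.e.\ the groupoidification.

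\textbf{Step 2: finiteness of the relevant index set.} A compact object of $\Cat_\infty$ has only finitely many equivalence classes of objects. So only finitely many $r \in R$ have $X_r \neq \emptyset$, and only finitely many $p$ have a nonempty fiber $\cC \times_P \{p\}$. The coproduct in Step 1 is therefore effectively finite.

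\textbf{Step 3: each summand is compact.} For each $p$, the fiber $R_p \coloneqq \phi\inv(p)$ is convex in $R$, hence a locally closed subposet. I will identify $\cC \times_P \{p\} \simeq \cC \times_R R_p$ with $\Piinfty(X_{R_p}, R_p)$. This is the standard compatibility of exit-path categories with restriction to locally closed subposets, established in \cite{Beyond_conicality} and underlying \cref{lem:pulling_back_to_a_locally_closed_subposet_preserves_categorical_finiteness_and_compactness}. That proposition then gives that each $\cC\times_P\{p\}$ is compact, so $\cW$ is a finite coproduct of compact objects and is compact.

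\textbf{Step 4: conclude.} The groupoidification functor $\Cat_\infty \to \Spc \subset \Cat_\infty$ is left adjoint to an inclusion that preserves filtered colimits, so it preserves compact objects; hence $|\cW|$ is compact. Compact objects are closed under finite colimits, so the pushout $\cC \sqcup_{\cW} |\cW| \simeq \Piinfty(X,P)$ is compact.

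The main obstacle is Step 3: correctly identifying the fiber of $\Piinfty(X,R) \to P$ over $p$ with $\Piinfty(X_{R_p},R_p)$, i.e.\ that exit paths over a locally closed subposet are computed by the fiber product. If this identification is not available in the required form, the fallback is to prove compactness of the fiber directly. One would present $\cC$ as a finite colimit of simplices, each mapping to $R$; its pullback to the locally closed $R_p$ is again a finite colimit, since fibers over locally closed subposets of $\Delta^n \to R$ are finite and pullback along $R_p \to R$ commutes with colimits. A lesser point to check is the pushout formula for localization at a subcategory, which is standard.
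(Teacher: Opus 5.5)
Your proof is correct. The paper does not prove this statement itself; it imports it from \cite[Theorem 3.6.4]{Beyond_conicality}, so there is no in-paper argument to compare yours with. What you give is a sound self-contained derivation from \cref{thm:stability_coarsening}: write $\Piinfty(X,P)\simeq \cC\sqcup_{\mathcal W}|\mathcal W|$ with $\mathcal W=\cC\times_P P^{\simeq}$, then show that $\mathcal W$ is compact.

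A few justifications should be tightened.

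\textbf{Step 2.} The remark about nonempty strata $X_r$ is unnecessary, and it implicitly uses the fiber identification. Finiteness of $\pi_0(\cC^{\simeq})$, from \cref{compact_implies_finite_pi_0}, already shows that $\cC\to P$ hits only finitely many $p$.

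\textbf{Step 3.} Your fallback is actually the cleaner route. It is purely categorical and lets you pull back directly along $\{p\}\hookrightarrow P$. This bypasses both the identification $\cC\times_R R_p\simeq\Piinfty(X_{R_p},R_p)$ and \cref{lem:pulling_back_to_a_locally_closed_subposet_preserves_categorical_finiteness_and_compactness}.

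However, ``pullback commutes with colimits'' is not automatic, because $\Cat_\infty$ is not locally cartesian closed. It holds here because the inclusion of a locally closed (equivalently, convex) subposet is exponentiable. Indeed, it is the composite of a downward-closed inclusion, which is a cartesian fibration, and an upward-closed inclusion, which is a cocartesian fibration.

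You should also run the argument in $\Cat_{\infty/P}$. Compactness of $\cC$ in $\Cat_\infty$ implies compactness of $(\cC\to P)$ in $\Cat_{\infty/P}$. The compact objects there are retracts of finite colimits of simplices $[n]\to P$, not just finite colimits. Finally, $[n]\times_P\{p\}$ is an interval of $[n]$, hence empty or a simplex.

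\textbf{Step 4.} You need two facts: that $|{-}|\colon\Cat_\infty\to\Spc$ preserves compact objects, and that $\Spc\hookrightarrow\Cat_\infty$ does. The latter holds because its right adjoint $(-)^{\simeq}\simeq\Map(\ast,-)$ commutes with filtered colimits.

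With these points made explicit, the argument is complete.
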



\subsection{\texorpdfstring{$!$}{!}-pullback functoriality}

We conclude this section by recalling $ ! $-pullback functoriality of (constructible) sheaves its main properties, mostly following \cite{Porta_Teyssier_Exodromy}.
Let us highlight that a number of important results in the constructible setting require local categorical compactness.

\begin{recollection}\label{locally_closed_lower_shriek}
	Let $X$ be a topological space, let $i \colon Y \hookrightarrow X$ be a closed immersion and let $j \colon U \hookrightarrow X$ be the open complement.
	By \cite[Corollary 2.18 \& Proposition 2.26]{Haine_Nonabelian_basechange} for every presentable stable $\infty$-category $\cE$, the fully faithful functors 
	\[ 
	i_{\ast} \colon \HSh(Y;\cE) \hookrightarrow \HSh(X;\cE)   \hookleftarrow  \HSh(U;\cE)\colon   j_{\ast}  
	\]
	exhibit $\HSh(X;\cE)$ as a recollement of $\HSh(Y;\cE)$  and $\HSh(U;\cE)$ in the sense of \cite[A.8.1]{Lurie_Higher_algebra}.
	In particular, $i_{\ast} \colon  \HSh(Y;\cE) \hookrightarrow \HSh(X;\cE)$ admits a right adjoint 
	\[
	i^{!,\hyp} \colon \HSh(X;\cE) \to \HSh(Y;\cE) \ ,
	\]
	whereas $j^\ast$ admits a left adjoint $j_!^{\hyp}$.
	For a locally closed immersion $i \colon Y \hookrightarrow X$ between topological spaces presented as the composition of a closed immersion $\iota \colon Y\hookrightarrow U$ followed by an open immersion $j \colon U\hookrightarrow X$, we set 
	\[
	i^{!,\hyp} \coloneqq \iota^{!,\hyp} \circ j^{\ast, \hyp} \colon \HSh(X;\cE) \to\HSh(Y;\cE)  \  .
	\]
	This does not depend on the choice of the presentation of $i$.
	Observe also that $i^{!,\hyp}$  admits a left adjoint given by 
	\[
		i_!^{\hyp} \coloneqq j_!^{\hyp} \circ \iota_\ast \colon \HSh(Y;\cE) \to \HSh(X;\cE)  \  .
	\]
\end{recollection}
	
\begin{eg}\label{i_S_shrieck}
	Let $(X,P)$ be a stratified space. 
	Let $S \subset P$ be a locally closed subset.
	Define
	\[ \geq S \coloneqq\{p\in P | \textup{ there exists } s\in S \text{ such that } p\geq s\} \]
	The set $\geq S$ is open in $P$ and $S$ is closed in $\geq S$.
	Thus, the inclusion $i_S \colon X_S \to X$ factors as
	\[ \begin{tikzcd}[column sep = small]
		X_S \arrow{r}{\iota_S} & X_{\geqslant S} \arrow{r}{i_{\geqslant S}} & X \ ,
	\end{tikzcd} \]
	where $\iota_S$ is a closed immersion and $i_{\geqslant S}$ is an open immersion.
\end{eg}
	
\begin{lem}[{\cite[Lemma 2.5.12]{Porta_Teyssier_Exodromy}}]\label{right_adjoint_and_!_closed_immersion}
	Let $i \colon Y \hookrightarrow X$ be a locally closed immersion between topological spaces presented as the composition of a closed immersion $\iota \colon Y\hookrightarrow U$ followed by an open immersion $j \colon U\hookrightarrow X$.
	Let $ L \colon \cE \rightleftarrows \cD \noloc R $ be an adjunction between presentable stable $ \infty $-categories.
	Then, the squares
	\[ \begin{tikzcd}
			\HSh(Y;\cE)  \arrow{r}{L^{\hyp}} \arrow[d, "i_!^{\hyp}"'] & \HSh(Y;\cD)\arrow{d}{i_!^{\hyp}} \\
			\HSh(X;\cE)\arrow[r, "L^{\hyp}"'] & \HSh(X;\cD)  
	\end{tikzcd} 
	\quad \text{and} \quad
	\begin{tikzcd}
		\HSh(Y;\cE)  &\HSh(Y;\cD)\arrow{l}[swap]{R^{\hyp}}\\
		\HSh(X;\cE)\arrow{u}{i^{!,\hyp}}&  \HSh(X;\cD)\arrow{l}{R^{\hyp}} \arrow[u, "i^{!,\hyp}"']
	\end{tikzcd} \]
	commute.
\end{lem}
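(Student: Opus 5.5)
We prove the claim by reducing both squares to the two basic cases, a closed immersion and an open immersion, and then using adjunctions. By definition $i_!^{\hyp} = j_!^{\hyp}\circ \iota_\ast$ and $i^{!,\hyp} = \iota^{!,\hyp}\circ j^{\ast,\hyp}$. Pasting commutative squares, it therefore suffices to establish four facts:
\begin{enumerate}
	\item $L^{\hyp}$ commutes with $\iota_\ast$;
	\item $L^{\hyp}$ commutes with $j_!^{\hyp}$;
	\item $R^{\hyp}$ commutes with $\iota^{!,\hyp}$;
	\item $R^{\hyp}$ commutes with $j^{\ast,\hyp}$.
\end{enumerate}
Facts (3) and (4) are the right-adjoint versions of facts (1) and (2). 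The right square is obtained from the left one by passing to right adjoints: the right adjoint of $i_!^{\hyp}$ is $i^{!,\hyp}$ and that of $L^{\hyp}$ is $R^{\hyp}$, so an equivalence $L^{\hyp} i_!^{\hyp}\simeq i_!^{\hyp}L^{\hyp}$ yields $i^{!,\hyp}R^{\hyp}\simeq R^{\hyp}i^{!,\hyp}$. So the real content is the left square.

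\emph{Open immersion.} Here $j^{\ast,\hyp}$ is simply restriction of presheaves, and restriction commutes with postcomposition by $R$. This gives $R^{\hyp} j_\ast \simeq j_\ast R^{\hyp}$, and also $R^{\hyp} j^{\ast,\hyp}\simeq j^{\ast,\hyp}R^{\hyp}$, which is fact (4). On left adjoints, the first equivalence gives $j^{\ast,\hyp}L^{\hyp}\simeq L^{\hyp}j^{\ast,\hyp}$, which is \cref{hyp_coeff_change_inverse_image}. For fact (2) we need $j_!^{\hyp}$ instead. Under the identification $\HSh(-;\cE)\simeq \HSh(-)\otimes\cE$, the functor $j_!^{\hyp}$ is $j_!\otimes \id_\cE$, where $j_!$ is the left adjoint of $j^\ast$ on $\Spc$-valued hypersheaves (for stable $\cE$ it agrees with the recollement adjoint). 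Likewise $L^{\hyp}=\id\otimes L$. Both composites are then $j_!\otimes L$, by bifunctoriality of $\otimes$ in $\PrL$. I would check that the recollement $j_!$ agrees with this tensored functor; this is where stability and \cite{Haine_Nonabelian_basechange} enter.

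\emph{Closed immersion.} This is the main obstacle, because $\iota_\ast$ is naturally a right adjoint and $L^{\hyp}$ a left adjoint. My first approach is to show that $\iota_\ast$ is itself of the form $\iota_\ast\otimes\id_\cE$. In the recollement for stable $\cE$, $\iota_\ast$ preserves colimits, since it has the right adjoint $\iota^{!,\hyp}$. It is therefore a morphism in $\PrL$, and it is natural in $\cE$ as soon as it commutes with the tensoring. Concretely, I would identify $\iota_\ast$ with the fiber of the unit $\id\to j_\ast j^\ast$ restricted to the essential image, or equivalently with $\mathrm{cofib}(j_!j^\ast\to\id)$ composed with $\iota^\ast$. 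Both descriptions are built from $j_!$, $j^\ast$ and finite colimits in the stable setting, all of which commute with $L^{\hyp}$ by the open case. This proves fact (1). Pasting facts (1) and (2) gives the left square, and taking right adjoints gives the right square.
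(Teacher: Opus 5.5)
The paper does not reprove this lemma; it quotes it from Porta--Teyssier. So I am judging your argument on its own. The reduction to $\iota_\ast$ and $j_!^{\hyp}$, the passage to right adjoints, and the open case are fine. Note that fact (2) follows at once from fact (4) by taking left adjoints, so the $\PrL$-tensor argument is not needed.

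\textbf{The closed case.} This is the step you rightly single out as the crux, and it does not go through as written.

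Your ``first approach'' is not meaningful. On $\Spc$-valued hypersheaves $\iota_\ast$ does not preserve colimits: $\iota_\ast$ of the initial sheaf is not initial once $Y\neq U$. So there is no functor $\iota_\ast\otimes\id_\cE$, and ``natural in $\cE$ as soon as it commutes with the tensoring'' is the claim itself.

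The fiber-of-the-unit description is the more serious problem. Write $j'\colon U\smallsetminus Y\hookrightarrow U$ for the open complement; it is not the $j$ of the statement. The fiber of $\id\to j'_\ast j'^{\ast,\hyp}$ is $\iota_\ast\iota^{!,\hyp}$, and it is built from $j'_\ast$. In general $j'_\ast$ does \emph{not} commute with $L^{\hyp}$: take $L=\Q\otimes_{\Z}(-)$ and $U\smallsetminus Y$ with infinitely many components accumulating at a point of $Y$. Restricted to the image of $\iota_\ast$, this description is merely tautological. So your claim that both descriptions use only $j_!$, $j^\ast$ and finite colimits is false for the first one. Commuting $L^{\hyp}$ past $!$-restrictions is precisely the hard statement that needs the hypotheses of \cref{prop:uppershriek_and_constructibility}.

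The cofiber description is the right one, but it computes $\iota_\ast\iota^{\ast,\hyp}\simeq\mathrm{cofib}(j'_!j'^{\ast,\hyp}\to\id)$, not $\iota_\ast$. To finish you must also use two facts. First, $L^{\hyp}$ commutes with the \emph{closed} pullback $\iota^{\ast,\hyp}$; this is \cref{hyp_coeff_change_inverse_image}, not the open case. Second, $\iota^{\ast,\hyp}\iota_\ast\simeq\id$. Then
\[ L^{\hyp}\iota_\ast\simeq L^{\hyp}\iota_\ast\iota^{\ast,\hyp}\iota_\ast\simeq\iota_\ast\iota^{\ast,\hyp}L^{\hyp}\iota_\ast\simeq\iota_\ast L^{\hyp}\iota^{\ast,\hyp}\iota_\ast\simeq\iota_\ast L^{\hyp}\ . \]
There is an even more direct repair that avoids $j'_!$ entirely. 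We have $j'^{\ast,\hyp}L^{\hyp}\iota_\ast G\simeq L^{\hyp}j'^{\ast,\hyp}\iota_\ast G\simeq 0$, so by the recollement $L^{\hyp}\iota_\ast G$ lies in the essential image of $\iota_\ast$. Applying $\iota^{\ast,\hyp}$ then gives the claim. With either repair your proof is complete.

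\textbf{A cleaner route.} Your fiber-of-the-unit formula is in fact the most efficient tool, but on the right-adjoint side, where everything is elementary. For $F\in\HSh(U;\cD)$ one has $\iota_\ast\iota^{!,\hyp}F\simeq\mathrm{fib}(F\to j'_\ast j'^{\ast,\hyp}F)$. The functor $R^{\hyp}$ is postcomposition with the exact functor $R$. By contrast, $\iota_\ast$, $j'_\ast$ and $j'^{\ast,\hyp}$ are precomposition along maps of posets of open subsets, and the unit is given by restriction maps. Hence
\[ \iota_\ast R^{\hyp}\iota^{!,\hyp}\simeq R^{\hyp}\iota_\ast\iota^{!,\hyp}\simeq\iota_\ast\iota^{!,\hyp}R^{\hyp} \ , \]
and full faithfulness of $\iota_\ast$ yields fact (3). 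Combined with fact (4), this gives the right square, and the left square follows by passing to left adjoints. This route avoids $j_!^{\hyp}$, the hypersheafified closed pullback, and tensor products in $\PrL$ altogether.
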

	
In general, the $ ! $-pullbacks functors do not preserve $P$-hyperconstructibility: this is indeed a special feature of the conical setting.

\begin{notation}[{\cite[\S6.5]{Porta_Teyssier_Exodromy}}]
	Let $(X,P)$ be a conically stratified space and let $\cE$ be a compactly generated presentable stable $\infty$-category.
	We denote by $\Cons_{P,\omega}^{\hyp}(X;\cE)$ the full subcategory of $\ConsPhyp(X;\cE)$ spanned by $P$-hyperconstructible hypersheaves with compact stalks.
	Under the exodromy equivalence, they correspond to $\Fun(\Pi_\infty(X,P), \cE^\omega)$.
\end{notation}

\begin{prop}\label{prop:uppershriek_and_constructibility}
	Let $(X,P)$ be a conically stratified space with locally weakly contractible strata and let $S \subseteq P$ be a locally closed subset.
	Let $\cE$ be a compactly generated presentable stable $\infty$-category.
	\begin{enumerate}\itemsep=0.2cm
		\item\label{prop:uppershriek_and_constructibility:preservation} \cite[Proposition 6.8.3]{Porta_Teyssier_Exodromy} The functor $i_S^{!,\hyp} \colon \HSh(X;\cE)  \to  \HSh(X_S;\cE)$ restricts to a functor
		\[ i_S^{!,\hyp} \colon \ConsPhyp(X;\cE) \to \ConsShyp(X_S;\cE)  \ . \]
	\end{enumerate}
	Assume in addition that $(X,P)$ is locally categorically compact and that $P$ is finite. Then:
	\begin{enumerate}\itemsep=0.2cm\setcounter{enumi}{1}
		\item\label{prop:uppershriek_and_constructibility:uppershriek_and_compact_stalks} \cite[Proposition 6.8.4]{Porta_Teyssier_Exodromy} The functor $i_S^{!,\hyp}$ restricts to a functor
		\[ i_S^{!,\hyp} \colon \Cons_{P,\omega}^{\hyp}(X;\cE) \to \Cons_{S,\omega}^{\hyp}(X_S;\cE)\ . \]
		
		\item\label{prop:uppershriek_and_constructibility:uppershriek_change_of_coefficients} \cite[Proposition 6.9.6]{Porta_Teyssier_Exodromy} For every morphism $L \colon \cE \to \cD$ of presentable stable $\infty$-categories, the commutative square
		\[ \begin{tikzcd}
			\HSh(X_S;\cE) \arrow{r}{L^{\hyp} } \arrow[d, "i_{S,!}^{\hyp}"'] & \HSh(X_S;\cD) \arrow{d}{i_{S,!}^{\hyp}} \\
			\HSh(X;\cE)  \arrow[r, "L^{\hyp}"'] & \HSh(X;\cE)  
		\end{tikzcd} \]
		is vertically right adjointable on $P$-hyperconstructible hypersheaves.
		That is, for every $F\in \ConsPhyp(X;\cE)$, the exchange transformation
		\[  L^{\hyp} \circ i_S^{!,\hyp}(F) \to i_S^{!,\hyp} \circ L^{\hyp}(F)  \]
		is an equivalence.
	\end{enumerate}
\end{prop}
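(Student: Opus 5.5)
The statement collects three results already in \cite{Porta_Teyssier_Exodromy}; I would reprove them in the order given, using exodromy (\cref{thm:exodromy_with_coefficients}) to move between hypersheaves and functors on $\Piinfty(X,P)$. First reduce to the case where $S$ is closed. Write $i_S = i_{\geqslant S}\circ \iota_S$ as in \cref{i_S_shrieck}. Then $i_{\geqslant S}^{!,\hyp}=i_{\geqslant S}^{\ast,\hyp}$ is an open restriction, and it preserves constructibility, compact stalks and change of coefficients (\cref{rem:pullback_functoriality}, \cref{hyp_coeff_change_inverse_image}). Moreover $(X_{\geqslant S},\geqslant S)$ stays conical and locally categorically compact. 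So it is enough to treat a closed immersion $i\colon X_S\hookrightarrow X$ with open complement $j\colon U\hookrightarrow X$.

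For (1), use the recollement fiber sequence $i_\ast i^{!,\hyp}F\to F\to j_\ast j^{\ast,\hyp}F$. This reduces the claim to showing that $j_\ast$ sends $\Cons^{\hyp}_{P\smallsetminus S}(U;\cE)$ into $\ConsPhyp(X;\cE)$, since constructible hypersheaves are stable and $i^{\ast,\hyp}$ preserves constructibility. The statement is local, so we may work on a conical chart $Z\times C(Y)$. There the stalks of $j_\ast G$ at a point of $Z\times\{\ast\}$ are computed as sections over $Z'\times C_\varepsilon(Y)\cap U$. By conicality these are independent of $\varepsilon$ and of small contractible $Z'$, because the strata are locally weakly contractible. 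This gives local hyperconstancy of $j_\ast G$ on each stratum.

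For (2), apply exodromy. By \cref{compact_generators_Cons}, compact stalks means the functor $\Piinfty\to\cE$ takes compact values. The stalk of $i^{!}F$ at $x$ is the fiber of $F_x\to (j_\ast j^\ast F)_x$. Using a categorically compact neighborhood $V$ of $x$, supplied by local categorical compactness, the stalk $(j_\ast j^\ast F)_x$ is a limit over $\Piinfty(V\cap U, P\smallsetminus S)$ of the compact values of $F$. \cref{lem:pulling_back_to_a_locally_closed_subposet_preserves_categorical_finiteness_and_compactness} shows this $\infty$-category is compact in $\Cat_\infty$. A limit over a compact $\infty$-category, which is a retract of a finite one, of compact objects in a stable category is compact. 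This finiteness step is the main obstacle: one has to identify $(j_\ast j^\ast F)_x$ with that limit, and finiteness of $P$ is needed to run induction on strata.

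For (3), $L^{\hyp}$ commutes with $i^\ast$, $j^\ast$ and with cofibers, so by the fiber sequence it suffices that $L^{\hyp}j_\ast\to j_\ast L^{\hyp}$ is an equivalence on constructible objects. Checking this on stalks, the stalk of $j_\ast$ is the finite limit from (2). The functor $L$ is exact, so it preserves finite limits and retracts, and the map is an equivalence.
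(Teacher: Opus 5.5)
The paper does not prove this proposition: all three items are imported from \cite{Porta_Teyssier_Exodromy}, and the text identifies the stalk computation in conical charts (\cite[Proposition 6.7.5]{Porta_Teyssier_Exodromy}) as the real input. Your skeleton fits that picture. You reduce to $S$ closed via $i_S=i_{\geqslant S}\circ\iota_S$, use the recollement sequence $i_\ast i^{!,\hyp}F\to F\to j_\ast j^{\ast,\hyp}F$, compute stalks of $j_\ast$ as sections over $Z'\times C_\varepsilon(Y)\cap U$ with $Z'$ weakly contractible, and let local categorical compactness make the resulting limit finite. Part (1) and the formal parts of (2) and (3) are fine. This includes your retract argument that a limit over a compact $\infty$-category of compact objects in a stable category is compact, and that an exact $L$ commutes with such limits.

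The gap is the step you flag yourself, and as written it is wrong. Take an open $V$ from the fundamental system given by local categorical compactness. In general $\Gamma(V\cap U,F)$ is not the stalk $(j_\ast j^{\ast,\hyp}F)_x$. That identification needs $[x]$ to be initial in $\Pi_\infty(V,P)$, which holds for your conical neighbourhoods $Z'\times C_\varepsilon(Y)$ but not for an arbitrary categorically compact $V$. You also cannot pass to the colimit over such $V$, because a filtered colimit of compact objects need not be compact.

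What you actually need is that $\Pi_\infty(Z'\times C_\varepsilon(Y)\cap U,P\smallsetminus S)$ itself is compact. Get this by sandwiching. Choose $Z''\times C_\delta(Y)\subset W\subset Z'\times C_\varepsilon(Y)$ with $Z''$ weakly contractible and $(W,P)$ categorically compact. The homotopy invariance behind your computation in (1) shows that $\Pi_\infty(Z''\times C_\delta(Y)\cap U)\to\Pi_\infty(Z'\times C_\varepsilon(Y)\cap U)$ is an equivalence. This map factors through $\Pi_\infty(W\cap U,P\smallsetminus S)$, so the target is a retract of it. That category is compact by \cref{lem:pulling_back_to_a_locally_closed_subposet_preserves_categorical_finiteness_and_compactness}, applied to the open subposet $P\smallsetminus S$. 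With this in place, (2) and (3) go through as you wrote them, since (3) uses the same identification of stalks.

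Finally, nothing in your argument runs an induction on strata, so the remark about finiteness of $P$ is unsupported. Finiteness of $P$ does enter in (3), though you do not say so. There $\cD$ is only presentable stable, and you compute stalks of the $\cD$-valued $j_\ast j^{\ast,\hyp}L^{\hyp}F$ as limits over exit-path categories. That uses exodromy with $\cD$-coefficients, which by \cref{thm:exodromy_with_coefficients} requires $P$ noetherian.
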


\section{Perverse sheaves and flat families}\label{sec:perverse_t_structure}

In this section we collect, the basic facts about the perverse $t$-structure and about how it varies in families.
The main result is \cref{prop:perverse_flatness_reformulation}, which allows to prove that the perverse $t$-structure \emph{universally satisfies openness of flatness} in the sense of \cite[Definition II.2.54]{DPS_Stable_pairs}.

\begin{convention}
	Throughout the whole section we fix a derived commutative ring $A$ and consider the associated presentable stable $\infty$-category $\Mod_A$, equipped with its standard $t$-structure $\tau = ((\Mod_A)_{\geqslant 0}, (\Mod_A)_{\leqslant 0})$, see \cite[Proposition 7.1.1.13]{Lurie_Higher_algebra}.
	In particular $\tau$ is accessible, left and right complete and compatible with filtered colimits.
\end{convention}


\subsection{Hypersheaves and $t$-structures}\label{t_structure_on_hypersheaves}

We start by recalling the basics of the induced $ t $-structure on $ \Mod_A $-valued hypersheaves.

\begin{recollection}\label{recollection:standard_t_structure}
	Let $X$ be a topological space.
	Recall from \cite[Proposition 2.1.1.1]{Lurie_SAG} that $\HSh(X;\Mod_A)$ is equipped with a $t$-structure
	\[ \tau^{\hyp}_X = \big( \HSh(X;\Mod_A)_{\geqslant 0}, \HSh(X;\Mod_A)_{\leqslant 0} \big) \]
	whose coconnective part is
	\[ \HSh(X;\Mod_A)_{\leq 0} \coloneqq \HSh(X;(\Mod_A)_{\leqslant 0}) \ . \]
	It follows from \emph{loc.\ cit.} that \smash{$\tau_X^{\hyp}$} is accessible, left complete, and compatible with filtered colimits.
	We refer to \smash{$\tau_X^{\hyp}$} as the \emph{standard t-structure} on \smash{$\HSh(X;\Mod_A)$}.
\end{recollection}

\begin{recollection}\label{recollection:t_exactness}
	Let $f \colon X \to Y$ be a map of topological spaces.
	Then \cite[Remark 1.3.2.8]{Lurie_SAG} shows that the functor
	\[ f^{\ast,\hyp} \colon \HSh(Y;\Mod_A) \to \HSh(X;\Mod_A) \]
	is $t$-exact with respect to the standard $t$-structures $\tau^{\hyp}_X$ and $\tau^{\hyp}_Y$.
	Consequently, its right adjoint 
	\begin{equation*}
		f_\ast \colon \HSh(X;\Mod_A) \to \HSh(Y;\Mod_A)
	\end{equation*}
	is left (but typically not right) $t$-exact.
\end{recollection}

We make repeated use of the following elementary observation:

\begin{lem}\label{lem:conservativity_trick}
	Let $f \colon \cE \to \cE'$ be a functor in $\PrL$.
	Assume that $\cE$ and $\cE'$ are stable and equipped with $t$-structures $\tau = (\cE_{\geqslant 0}, \cE_{\leqslant 0})$ and $\tau' = (\cE'_{\geqslant 0}, \cE'_{\leqslant 0})$.
	If $f$ is $t$-exact and conservative, then an object $E \in \cE$ is (co)connective if and only if $f(E)$ is (co)connective.
\end{lem}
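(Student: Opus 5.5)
One direction is immediate from $t$-exactness, and the other combines $t$-exactness with conservativity via the truncation triangles; I treat the connective case, and the coconnective case is dual.

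If $E \in \cE_{\geqslant 0}$, then $f(E) \in \cE'_{\geqslant 0}$ because $f$ is $t$-exact.

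For the converse, assume $f(E) \in \cE'_{\geqslant 0}$ and consider the truncation fiber sequence
\[ \tau_{\geqslant 0} E \to E \to \tau_{\leqslant -1} E \]
in $\cE$. Since $f$ is exact and $t$-exact, it commutes with truncations, so $f(\tau_{\leqslant -1} E) \simeq \tau'_{\leqslant -1} f(E)$. This vanishes because $f(E)$ is connective. In a stable setting, conservativity of the exact functor $f$ means it detects zero objects: the map $\tau_{\leqslant -1}E \to 0$ is sent to an equivalence, hence is itself an equivalence. Therefore $\tau_{\leqslant -1} E \simeq 0$, which gives $E \simeq \tau_{\geqslant 0} E \in \cE_{\geqslant 0}$.

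The coconnective case runs the same argument with the sequence $\tau_{\geqslant 1} E \to E \to \tau_{\leqslant 0} E$, showing that $\tau_{\geqslant 1} E \simeq 0$. There is no real obstacle here. The only point to check is that $t$-exactness yields compatibility with the truncation functors, which is standard: a $t$-exact functor sends the truncation triangle of $E$ to one of the form (connective) $\to f(E) \to$ (coconnective), and that is the truncation triangle of $f(E)$ by uniqueness.
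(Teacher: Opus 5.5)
Your proposal is correct and follows the paper's proof: both use $t$-exactness to identify $f(\tau_{\leqslant -1}E) \simeq \tau'_{\leqslant -1} f(E) \simeq 0$, then conservativity to conclude $\tau_{\leqslant -1}E \simeq 0$, with the coconnective case handled by the dual truncation. The only differences are that the paper works with $\geqslant n$ for arbitrary $n$ and leaves the easy direction implicit, while you spell it out.
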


\begin{proof}
	Write $\tau_{\leqslant n}$ and $\tau_{\leqslant n}'$ for the truncation functors in $\cE$ and $\cE'$, respectively.
	Let $E \in \cE$ be an object and assume that $f(E) \in \cE'_{\geqslant n}$.
	Then
	\[ 0 \simeq \tau'_{\leqslant n-1} f(E) \simeq f (\tau_{\leqslant n-1}(E) ) \ , \]
	where the second equivalence follows from $t$-exactness of $f$.
	Since $f$ is conservative, $\tau_{\leqslant n-1}(E) \simeq 0$, i.e., $E \in \cE_{\geqslant n}$.
	The symmetric argument (using the other truncation functors) shows that if $f(E) \in \cE'_{\leqslant n}$, then $E \in \cE_{\leqslant n}$ as well.
\end{proof}

We collect below some well-known properties of the standard $t$-structure.

\begin{cor}\label{cor:t-structure_on_stalks}
	Let $X$ be a topological space.
	Then $F \in \HSh(X;\Mod_A)$ is (co)connective if and only if its stalks are (co)connective.
\end{cor}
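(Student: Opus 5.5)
The plan is to apply \cref{lem:conservativity_trick} to the functor that records all stalks at once,
\[ \Phi \coloneqq \prod_{x \in X} x^{\ast,\hyp} \colon \HSh(X;\Mod_A) \to \prod_{x\in X} \Mod_A \ , \]
where $\prod_{x \in X}\Mod_A$ carries the product of the standard $t$-structures. In that $t$-structure an object is (co)connective if and only if each of its components is. Once the hypotheses of the lemma are checked for $\Phi$, the lemma says exactly that $F$ is (co)connective if and only if all of its stalks $x^{\ast,\hyp}F$ are.

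First, $\Phi$ is a morphism in $\PrL$. Each $x^{\ast,\hyp}$ is a left adjoint, so it preserves colimits. Colimits in a product of presentable $\infty$-categories are computed componentwise, so $\Phi$ preserves colimits as well. Second, $\Phi$ is $t$-exact: by \cref{recollection:t_exactness}, applied to the inclusion $x \colon \ast \to X$, each $x^{\ast,\hyp}$ is $t$-exact for the standard $t$-structures. Since truncations in the product are computed componentwise, $\Phi$ is $t$-exact too.

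The remaining hypothesis, conservativity of $\Phi$, is the main point. Because everything is stable, it suffices to show that if all stalks of $F$ vanish then $F \simeq 0$. I would argue this in two steps.
\begin{enumerate}
	\item Each homotopy sheaf $\pi_n F$, the sheafification of $U \mapsto \pi_n F(U)$, is a sheaf of $\pi_0 A$-modules whose stalk at $x$ is $\pi_n(x^{\ast,\hyp}F) = 0$. An ordinary sheaf with vanishing stalks is zero, so all homotopy sheaves of $F$ vanish.
	\item Hypersheaves are exactly the $\infty$-connective-local objects, i.e.\ those that are local with respect to morphisms inducing isomorphisms on all homotopy sheaves. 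Hence a hypersheaf with vanishing homotopy sheaves is zero. Concretely, $0 \to F$ induces isomorphisms on all homotopy sheaves, so it is an $\infty$-connective morphism between hypersheaves and therefore an equivalence.
\end{enumerate}
This is precisely the step where the hyper-assumption is needed; conservativity of stalks fails for non-hypercomplete sheaves in general.

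If one prefers not to use the identification of homotopy sheaves in the $\Mod_A$-valued setting, an alternative is to reduce to $\Spc$-valued hypersheaves on $X$. Passing through the equivalence $\HSh(X;\Mod_A)\simeq\FunR(\HSh(X)\op,\Mod_A)$ and the forgetful functor to spectra, one can then invoke the standard fact that the points of $X$ detect equivalences in the hypercomplete topos $\HSh(X)$, since they detect homotopy sheaves. Either route completes the verification of conservativity and hence the proof.
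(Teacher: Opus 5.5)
Your proposal is correct and follows the paper's argument: apply \cref{lem:conservativity_trick} to the family of stalk functors, with $t$-exactness coming from \cref{recollection:t_exactness}. The only difference is that the paper takes joint conservativity of $\{x^{\ast,\hyp}\}_{x\in X}$ as known (citing \cite[Recollection 1.7]{HPT}), whereas you sketch a proof of it; in your step 2, with $\Mod_A$-coefficients, the cleanest justification is the left completeness, hence left separatedness, of $\tau^{\hyp}_X$ recorded in \cref{recollection:standard_t_structure}, rather than the $\Spc$-level characterization of hypersheaves as $\infty$-connective-local objects.
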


\begin{proof}
	\cref{recollection:t_exactness} and \cite[Recollection 1.7]{HPT} imply that the stalk functors
	\[ \{x^{\ast,\hyp} \colon \HSh(X;\Mod_A) \to \Mod_A\}_{x \in X} \]
	are $t$-exact and jointly conservative.
	The conclusion then follows from \cref{lem:conservativity_trick}.
\end{proof}

\begin{cor}\label{locality_t_structure}
	Let $X$ be a topological space and let $\{U_i\}_{i \in I}$ be an open cover of $X$.
	Then a hypersheaf $F \in \HSh(X;\Mod_A)$ is (co)connective if and only if for each $i \in I$, the restriction $F |_{U_i}$ is (co)connective.
\end{cor}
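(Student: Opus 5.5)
The plan is to reduce the statement to \cref{cor:t-structure_on_stalks}, exactly as that corollary was reduced to \cref{lem:conservativity_trick}. The forward implication is immediate. Restriction to an open subset is the hypersheaf pullback $j_i^{\ast,\hyp}$ along the open immersion $j_i \colon U_i \hookrightarrow X$, and this functor is $t$-exact by \cref{recollection:t_exactness}. So if $F$ is (co)connective, each $F|_{U_i}$ is (co)connective.

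For the converse, we use stalks. For a point $x \in U_i$, the stalk of $F$ at $x$ is computed by the stalk of $F|_{U_i}$ at $x$:
\[ x^{\ast,\hyp}F \simeq x^{\ast,\hyp}(F|_{U_i}) \ . \]
This holds because the composite $\{x\} \hookrightarrow U_i \hookrightarrow X$ is the inclusion of $x$, and hypersheaf pullbacks compose. Now suppose each $F|_{U_i}$ is (co)connective. Applying \cref{cor:t-structure_on_stalks} on $U_i$, every stalk of $F|_{U_i}$ is (co)connective. Since the $U_i$ cover $X$, every point $x \in X$ lies in some $U_i$, so every stalk of $F$ is (co)connective. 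Applying \cref{cor:t-structure_on_stalks} again, this time on $X$, shows that $F$ is (co)connective.

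An alternative route is to apply \cref{lem:conservativity_trick} directly to the product functor
\[ \prod_i j_i^{\ast,\hyp} \colon \HSh(X;\Mod_A) \to \prod_i \HSh(U_i;\Mod_A) \ . \]
This functor is $t$-exact, and it is conservative because the $U_i$ form a cover, so the stalks detect equivalences. The stalk argument above avoids having to check that this product is a morphism in $\PrL$, so I would follow it.

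The only point needing any care is the compatibility of stalks with restriction, and it follows directly from the functoriality of $f^{\ast,\hyp}$. I therefore expect no real obstacle in this proof.
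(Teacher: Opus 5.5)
Your proposal is correct and matches the paper's proof, which says only that the statement is immediate from \cref{cor:t-structure_on_stalks}. Your stalkwise argument, using $x^{\ast,\hyp}F \simeq x^{\ast,\hyp}(F|_{U_i})$ for $x \in U_i$ and then applying that corollary on each $U_i$ and on $X$, is exactly the expansion of that one line.
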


\begin{proof}
	Immediate from \cref{cor:t-structure_on_stalks}.
\end{proof}

\begin{rem}\label{Res_exact}
    If  $A \to B$ is a morphism in $\dCRing$, the functor 
    \[ \Res_{B/A}^{\hyp} \colon \HSh(X; \Mod_B) \to \HSh(X; \Mod_A) \]
    introduced in \cref{projection_formula} is conservative and $t$-exact with respect to the standard $t$-structures from \cref{recollection:standard_t_structure}.
    Thus, \cref{lem:conservativity_trick} shows that $F \in \HSh(X;\Mod_B)$ is (co)connective if and only if so is $\Res_{B/A}^{\hyp}(F) \in \HSh(X;\Mod_A)$.
\end{rem}


\begin{lem}\label{more_exactness}
	Let $X$ be a topological space. 
	The following hold:
	\begin{enumerate}\itemsep=0.2cm
		\item For every open immersion $j  \colon U \hookrightarrow X$, the functor
		\[ j_! \colon \HSh(U;\Mod_A) \hookrightarrow \HSh(X;\Mod_A) \]
		is right $\tau^{\hyp}_X$-exact.

		\item For every locally closed immersion $i \colon Y\to X$, the functor
		\[ i^{!,\hyp} \colon \HSh(X;\Mod_A) \to \HSh(Y;\Mod_A) \]
		of \cref{locally_closed_lower_shriek} is left $\tau^{\hyp}_X$-exact.
		
		\item For every closed immersion $i \colon Y \hookrightarrow X$, the functor
		\[ i_\ast \colon \HSh(Y;\Mod_A) \hookrightarrow \HSh(X;\Mod_A) \]
		is $\tau^{\hyp}_X$-exact.
	\end{enumerate}
\end{lem}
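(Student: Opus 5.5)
The plan is to reduce everything to \cref{cor:t-structure_on_stalks}, the $t$-exactness of pullbacks (\cref{recollection:t_exactness}) and formal adjunction arguments. I would prove (3) first, then (1), and finally obtain (2) from (1) and (3) by passing to right adjoints.

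For (3), let $i \colon Y \hookrightarrow X$ be a closed immersion. Left $t$-exactness of $i_\ast$ holds because it is right adjoint to the $t$-exact functor $i^{\ast,\hyp}$. For right $t$-exactness I would compute stalks.
\begin{itemize}
\item For $x \in Y$, the fact that $i$ is closed gives $x^{\ast,\hyp} i_\ast F \simeq x^{\ast,\hyp} F$. Since $Y$ is closed, every neighbourhood of $x$ in $Y$ is cut out by a neighbourhood of $x$ in $X$, so the colimits over neighbourhoods defining the two stalks agree.
\item For $x \notin Y$, the restriction $j^\ast i_\ast F$ to the open complement vanishes, so the stalk at $x$ is $0$. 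This vanishing is part of the recollement structure of \cref{locally_closed_lower_shriek}.
\end{itemize}
Hence the stalks of $i_\ast F$ are stalks of $F$ or zero, and \cref{cor:t-structure_on_stalks} shows that $i_\ast$ preserves connective objects. The one point needing care is the stalk computation at $x \in Y$ for hypersheaves. I would handle it by using the identification of $x^{\ast,\hyp}$ with the presheaf stalk, which commutes with hypersheafification.

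For (1), let $j \colon U \hookrightarrow X$ be an open immersion. I would use that $j_!^{\hyp}$ is left adjoint to the $t$-exact functor $j^\ast$, and argue as follows.
\begin{itemize}
\item If $F$ is connective and $G \in \HSh(X;\Mod_A)_{\leqslant -1}$, then $\Map(j_! F, G) \simeq \Map(F, j^\ast G) \simeq \ast$, because $j^\ast G$ is again coconnective of the same degree.
\item Since the connective part of a $t$-structure is the left orthogonal of $\HSh(X;\Mod_A)_{\leqslant -1}$, this gives $j_! F \in \HSh(X;\Mod_A)_{\geqslant 0}$.
\end{itemize}
The same orthogonality argument shows in general that a left adjoint of a left $t$-exact functor is right $t$-exact. (Alternatively, one could check stalks of $j_!$, which are either those of $F$ or zero.)

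For (2), write the locally closed immersion as $i = j \circ \iota$ with $\iota$ closed and $j$ open, so that $i^{!,\hyp} = \iota^{!,\hyp} \circ j^{\ast,\hyp}$. Here $j^{\ast,\hyp}$ is $t$-exact, and $\iota^{!,\hyp}$ is right adjoint to $\iota_\ast$, which is right $t$-exact by (3). A right adjoint of a right $t$-exact functor is left $t$-exact, by the dual of the orthogonality argument in (1): if $G$ is coconnective and $F$ is $1$-connective, then $\Map(F, \iota^! G) \simeq \Map(\iota_\ast F, G) \simeq \ast$. So $\iota^{!,\hyp}$ is left $t$-exact, and composing with $j^{\ast,\hyp}$ shows that $i^{!,\hyp}$ is left $t$-exact.
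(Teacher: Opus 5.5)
Your proof is correct. Part (1) matches the paper, but for (2) and (3) you run the argument in the opposite direction.

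\textbf{The paper's route.} It proves (2) first. After reducing to a closed immersion $i$ with open complement $j$, it uses the recollement fiber sequence $i^{!,\hyp}(F) \to i^{\ast,\hyp}(F) \to i^{\ast,\hyp} j_\ast j^{\ast,\hyp}(F)$. The two right-hand terms are coconnective because $i^{\ast,\hyp}$ and $j^{\ast,\hyp}$ are $t$-exact and $j_\ast$ is left $t$-exact. The conclusion then follows from closure of $\HSh(X;\Mod_A)_{\leqslant 0}$ under limits. Right $t$-exactness of $i_\ast$ in (3) is then formal, since $i_\ast$ is left adjoint to $i^{!,\hyp}$.

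\textbf{Your route.} You prove (3) directly, by computing the stalks of $i_\ast F$ and invoking \cref{cor:t-structure_on_stalks}. You then deduce (2) from the same adjunction, used in the other direction.

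\textbf{Comparison.} The paper's argument is purely formal: it uses only the recollement and \cref{recollection:t_exactness}, never the stalk criterion or any neighbourhood-cofinality argument. Yours replaces the fiber-sequence argument with an explicit, elementary stalk computation. That computation yields both halves of the $t$-exactness of $i_\ast$ at once, so your separate adjunction argument for left exactness is redundant.

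\textbf{A small inaccuracy.} The identification $x^{\ast,\hyp} i_\ast F \simeq x^{\ast,\hyp} F$ for $x \in Y$ holds for any subspace inclusion, because open neighbourhoods of $x$ in $Y$ are always traces of open neighbourhoods in $X$. Closedness of $Y$ is what you need for the vanishing of the stalks at $x \notin Y$, not for the first bullet.
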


\begin{proof}
	Item (1) follows from \cref{recollection:t_exactness} as $j_!$ is left adjoint to $j^{\ast, \hyp}$.
	We now prove (2).
	Choose a presentation of $i$ as a closed immersion $\iota \colon Y \hookrightarrow U$ followed by an open immersion $j \colon U \hookrightarrow X$, so that $i^{!,\hyp} =\iota^{!,\hyp}\circ j^{\ast,\hyp}$.
	By \cref{recollection:t_exactness}, $ j^{\ast,\hyp} $ is $ t $-exact; hence can reduce to the case where $i$ is a closed immersion.
	Let $j \colon X \smallsetminus Y \hookrightarrow X$ be the inclusion of the open complement.
	For $F\in \HSh(X;\Mod_A)$, there is a fiber sequence 
	\[ i^{!,\hyp}(F) \to i^{\ast,\hyp}(F) \to i^{\ast,\hyp}j_\ast j^{\ast,\hyp}(F) \ . \]
	Thus, if $F\in \HSh(X;\Mod_A)_{ \leqslant0}$, \cref{recollection:t_exactness} shows that the middle and right terms are also coconnective. 
	Since $\HSh(X;\Mod_A)_{ \leqslant0}$ is stable under limits \cite[Corollary 1.2.1.6]{Lurie_Higher_algebra}, the conclusion follows.
	
	To prove (3), by \cref{recollection:t_exactness}, it remains to prove that $i_\ast$ is right $\tau^{\hyp}_X$-exact.
	This follows from item (2) as $i_\ast$ is left adjoint to $i^{!,\hyp}$.
\end{proof}

%


\begin{lem}\label{coconnectivity_and_strata}
	Let $(X,P)$ be a stratified space with $P$ noetherian.
	Then $F\in \HSh(X;\Mod_A)$ is coconnective if and only if for each $ p \in P $, the $ ! $-restriction $i_p^{!, \hyp}(F)$ is coconnective.
\end{lem}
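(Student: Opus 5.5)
The forward implication is immediate from \cref{more_exactness}-(2): each $i_p^{!,\hyp}$ is left $\tau^{\hyp}_X$-exact, so it carries coconnective objects to coconnective objects. The content lies in the converse, which I would prove by noetherian induction on $P$, using the recollement of \cref{locally_closed_lower_shriek} to peel off one stratum at a time.

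\emph{Reduction to open subposets.} For an open (upward-closed) subset $Q \subseteq P$ and $p \in Q$, the stratum $X_p$ is locally closed in $X_Q$. Moreover $i_p^{!,\hyp}(F|_{X_Q}) \simeq i_p^{!,\hyp}(F)$, because by definition $i_p^{!,\hyp}$ first restricts to the open $X_{\geqslant p}\subseteq X_Q$. So the hypothesis passes to every $F|_{X_Q}$. Let $\mathcal{U}$ be the collection of open subsets $Q \subseteq P$ such that $F|_{X_Q}$ is coconnective. Then:
\begin{itemize}
\item $\emptyset \in \mathcal{U}$;
\item $\mathcal{U}$ is closed under arbitrary unions, by \cref{locality_t_structure} applied to the cover $\{X_Q\}_{Q\in\mathcal{U}}$ of $X_{\bigcup Q}$.
\end{itemize}
Hence $\mathcal{U}$ has a largest element $Q_0$. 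Suppose $Q_0 \neq P$. Since $P$ is noetherian, the nonempty set $P \smallsetminus Q_0$ has a maximal element $p$ (an infinite strictly ascending chain would produce a strictly descending chain of closed subsets $\downarrow p_i$; I would double-check which chain condition "noetherian" refers to, and dualize the argument if needed). Then $Q \coloneqq Q_0 \cup \{p\}$ is open, and $X_p$ is closed in $X_Q$ with open complement $X_{Q_0}$.

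\emph{The inductive step.} Write $G \coloneqq F|_{X_Q}$, let $\iota\colon X_p \hookrightarrow X_Q$ be the closed immersion and $j\colon X_{Q_0}\hookrightarrow X_Q$ its open complement. The recollement gives a fiber sequence
\[ \iota_\ast \iota^{!,\hyp} G \to G \to j_\ast j^{\ast,\hyp} G \ . \]
\begin{itemize}
\item The right term is coconnective: $j^{\ast,\hyp}G = F|_{X_{Q_0}}$ is coconnective by choice of $Q_0$, and $j_\ast$ is left $t$-exact by \cref{recollection:t_exactness}.
\item The left term is coconnective: $\iota^{!,\hyp}G \simeq i_p^{!,\hyp}F$ is coconnective by hypothesis, and $\iota_\ast$ is $t$-exact by \cref{more_exactness}-(3).
\end{itemize}
Since the coconnective part is closed under extensions (it is stable under limits, so in particular under fibers of maps between coconnective objects and hence under extensions in a stable setting), $G$ is coconnective. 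Thus $Q \in \mathcal{U}$, contradicting the maximality of $Q_0$. Therefore $Q_0 = P$ and $F$ is coconnective.

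\emph{Main obstacle.} The only delicate point is the well-foundedness step: extracting a maximal element of $P\smallsetminus Q_0$ from the noetherian hypothesis on $P$. This is purely order-theoretic and depends on the precise convention for "noetherian". The identification $\iota^{!,\hyp} \simeq i_p^{!,\hyp}$ on $X_Q$ also needs a check, but it follows from the independence of presentation stated in \cref{locally_closed_lower_shriek}.
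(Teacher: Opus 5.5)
Your proof is correct and is essentially the paper's argument: both rest on the existence of maximal elements in nonempty subsets of $P$ (this is what ``noetherian'' means here, as the paper's own proof shows, so no dualization is needed), locality of the standard $t$-structure, and the recollement fiber sequence $\iota_\ast\iota^{!,\hyp}G \to G \to j_\ast j^{\ast,\hyp}G$ for a closed stratum $X_p$ whose open complement already has coconnective restriction. The only difference is bookkeeping. The paper chooses $p$ maximal such that $F|_{X_{P_{\geqslant p}}}$ is not coconnective and works on $X_{P_{\geqslant p}}$ with open complement $X_{P_{>p}}$, whereas you take the largest good open $Q_0$ and adjoin a maximal point of $P\smallsetminus Q_0$.
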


\begin{proof}
	The direct implication follows from \cref{more_exactness}-(2).
	Let us show the converse.
	Since the open subsets $\{P_{\geq p}\}_{p\in P}$ cover $P$, by \cref{locality_t_structure} it suffices to show that 
	\begin{equation}\label{eq_coconnectivity_and_strata}
		i_{P_{\geq p}}^{\ast, \hyp}(F) \text{ is coconnective }
	\end{equation}
    for each $p\in P$.
	
	We argue by contradiction and assume that \eqref{eq_coconnectivity_and_strata} fails for some $p\in P$.
	Since $P$ is noetherian, we can choose $p$ to be maximal. 
	Hence,  for every $q\in P_{>p}$,  the hypersheaf $i_{P_{\geq q}}^{\ast, \hyp}(F)$  is coconnective.
	Since the open subsets $\{P_{\geq q}\}_{q>p}$ cover $P_{>p}$, \cref{locality_t_structure} implies that \smash{$i_{P_{> p}}^{\ast, \hyp}(F)$} is coconnective.
	Let $i \colon X_p \hookrightarrow X_{P_{\geq p}}$ and $j \colon X_{P_{>p}} \hookrightarrow X_{P_{\geq p}}$ be the inclusions and consider the fibre sequence 
	\[ i_{\ast}i_p^{!, \hyp}(F) \to  i_{P_{\geq  p}}^{\ast \hyp}(F)   \to  	j_{\ast} i_{P_{>  p}}^{\ast, \hyp}(F) \ . \]
	Then, by assumption combined with \cref{recollection:t_exactness}, the right and left terms of the above sequence are coconnective.
	Hence so is the middle term, a contradiction.
\end{proof}


\subsection{Perverse $t$-structures}

In the following definition, we make use of the $t$-structure on hypersheaves from  \cref{t_structure_on_hypersheaves}.

\begin{defin}\label{def_t_structure_perv}
	Let $(X,P)$ be a stratified space with $P$ finite, let $A \in \dCRing$ and $\pfrak \colon P \to \mathbb Z$ be a function.
	We define
	\[ {}^{\pfrak} \HSh(X;\Mod_A)_{ \geqslant0}  \coloneqq   \big\{F \in \HSh(X;\Mod_A)\mid \forall p \in P \ , \: i_p^{\ast,\hyp}(F)\in \HSh(X_p;\Mod_A)_{\geq \pfrak(p)}\ \big\} \]
	and
	\[ {}^{\pfrak} \HSh(X;\Mod_A)_{\leqslant 0}  \coloneqq \big\{F \in \HSh(X;\Mod_A)\mid \forall p \in P \ , \:  i_p^{!,\hyp}(F)\in \HSh(X_p;\Mod_A)_{\leq \pfrak(p)}\ \big\} \]
\end{defin}

\begin{lem}
    In the setting of \cref{def_t_structure_perv}, the subcategories 
	\[ ({}^{\pfrak} \HSh(X;\Mod_A)_{ \geqslant 0},  {}^{\pfrak} \HSh(X;\Mod_A)_{\leqslant 0}) \]
	define a $t$-structure on $\HSh(X;\Mod_A)$.
\end{lem}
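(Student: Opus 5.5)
We will obtain the perverse $t$-structure by \emph{recollement} (gluing), in the style of Beilinson--Bernstein--Deligne. The induction is on the cardinality of the finite poset $P$.

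\emph{Base case.} If $P=\{p\}$ is a singleton, then $X=X_p$. In that case $i_p^{\ast,\hyp}=i_p^{!,\hyp}=\id$. The two subcategories are then the standard $t$-structure $\tau_X^{\hyp}$ of \cref{recollection:standard_t_structure}, shifted by $\pfrak(p)$, so there is nothing to prove.

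\emph{Inductive step.} Suppose $|P|\ge 2$ and choose a minimal element $p\in P$, so that $\{p\}$ is closed. Its complement $U\coloneqq P\smallsetminus\{p\}$ is open in $P$. This gives a closed immersion $i\colon X_p\hookrightarrow X$ and an open complement $j\colon X_U\hookrightarrow X$.

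By \cref{locally_closed_lower_shriek}, $\HSh(X;\Mod_A)$ is a recollement of $\HSh(X_p;\Mod_A)$ and $\HSh(X_U;\Mod_A)$. By induction, $\HSh(X_U;\Mod_A)$ carries the perverse $t$-structure attached to $\pfrak|_U$, and $\HSh(X_p;\Mod_A)$ carries the shifted standard one. The gluing result \cite[Proposition A.8.12]{Lurie_Higher_algebra} (the $\infty$-categorical version of BBD 1.4.10) then produces a $t$-structure on $\HSh(X;\Mod_A)$ with
\[ \{F : j^{\ast}F\in{}^{\pfrak}_{\geq0},\ i^{\ast,\hyp}F\in{}_{\geq\pfrak(p)}\}\quad\text{and}\quad\{F : j^{\ast}F\in{}^{\pfrak}_{\leq0},\ i^{!,\hyp}F\in{}_{\leq\pfrak(p)}\}. \]
The hypothesis of that proposition is that $i^{\ast}$, $j^{\ast}$ and their adjoints form a recollement. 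This holds by \cref{locally_closed_lower_shriek}.

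It remains to check that these glued subcategories coincide with those of \cref{def_t_structure_perv}. The key point is compatibility of restrictions for $q\in U$. Since $X_q\hookrightarrow X$ factors through the open $X_U$, and $j^{\ast}$ is an open restriction, we have $i_q^{\ast,\hyp}\simeq i_{q,U}^{\ast,\hyp}\circ j^{\ast}$. Likewise $i_q^{!,\hyp}\simeq i_{q,U}^{!,\hyp}\circ j^{\ast}$. The latter identity follows from the definition of $!$-restriction for locally closed immersions in \cref{locally_closed_lower_shriek}, including its independence of the chosen presentation. With these identifications, the conditions of \cref{def_t_structure_perv} for the strata in $U$ become exactly the inductive conditions on $j^{\ast}F$.

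\emph{Expected obstacle.} The main point of care is this last bookkeeping: the $!$-restriction to $X_q$ computed in $X_U$ must agree with the one computed in $X$. It follows from the presentation-independence asserted in \cref{locally_closed_lower_shriek}, but it needs to be stated explicitly.

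\emph{Alternative argument.} One could instead verify the $t$-structure axioms directly. Shift-stability and $\Hom$-vanishing are straightforward: by induction over the strata using the recollement fibre sequences, this reduces to \cref{more_exactness}. The hard axiom is the existence of truncation triangles, and that is exactly what the gluing proposition supplies. This is why we prefer the recollement route.
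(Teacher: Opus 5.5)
Your proof is correct and follows the same route as the paper: induction on $|P|$, with the singleton case giving a shifted standard $t$-structure and the inductive step a BBD-style gluing along the recollement of \cref{locally_closed_lower_shriek}. The only difference is that you peel off a minimal (closed) stratum and glue it to the open $X_U$, whereas the paper removes a maximal (open) stratum $X_p$ and glues it to the closed $X_Q$; this mirror-image choice is harmless, since in either case you only need to check that the $\ast$- and $!$-restrictions to individual strata are compatible with the recollement.
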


\begin{proof}
     The proof is the same as \cite[Corollary 2.1.4]{BBD}.
     Namely we argue by recursion on the cardinality of $P$.
     If $P=\{\ast\}$, \cref{def_t_structure_perv} produces  a shift of the standard $t$-structure on $\HSh(X;\Mod_A)$.
     Assume that $P$ has at least two elements and pick $p\in P$ maximal.
     Put $Q\coloneqq P\setminus p$ and let $\mathfrak q \coloneqq \pfrak|_Q \colon Q\to \bZ$.
     Then, the conclusion follows from the observation that
	\[ ({}^{\pfrak} \HSh(X;\Mod_A)_{ \geqslant 0}  , {}^{\pfrak} \HSh(X;\Mod_A)_{ \leqslant 0} ) \]     
	is the $t$-structure obtained by gluing  the $t$-structure
	\[ ({}^{\mathfrak q} \HSh(X_Q;\Mod_A)_{ \geqslant 0}  , {}^{\mathfrak q} \HSh(X_Q;\Mod_A)_{ \leqslant 0} ) \]
	with the shifted standard $t$-structure
	\[ (\HSh(X_p;\Mod_A)_{ \geqslant \pfrak(p)},\HSh(X_p;\Mod_A)_{ \leqslant \pfrak(p)} ) \ . \qedhere \]
\end{proof}

\begin{notation}\label{notation:perverse_t_structure}
    We refer to the $t$-structure from \cref{def_t_structure_perv} as the \textit{$\pfrak$-perverse $t$-structure} and denote it by \smash{$\ptauhypX$}.
    We write 
	\[ {}^{\pfrak} \mathrm{Perv}^{\hyp}(X;\Mod_A) \subset \HSh(X;\Mod_A) \]
	for its heart.
\end{notation}

\begin{lem}\label{Perv_scalar_restriction_prep}
    Let $(X,P)$ be a  stratified space with $P$ finite and let $\pfrak \colon P \to \mathbb Z$ be a function.
    Let $A \to B$ be a morphism in $\dCRing$.
    Then $F\in \HSh(X;\Mod_B)$ is $\ptauhypX$-(co)connective if and only if $\Res^{\hyp}_{B/A}(F)\in \HSh(X;\Mod_A)$ is $\ptauhypX$-(co)connective.
\end{lem}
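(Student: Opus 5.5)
The plan is to reduce both halves of the statement to the corresponding fact for the standard $t$-structure, \cref{Res_exact}. The reduction consists of commuting $\Res^{\hyp}_{B/A}$ past the functors $i_p^{\ast,\hyp}$ and $i_p^{!,\hyp}$ that appear in \cref{def_t_structure_perv}.

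For the connective half, fix $p \in P$. \cref{hyp_coeff_change_inverse_image} applies to the adjunction $B \otimes_A (-) \dashv \Res_{B/A}$, but it concerns the left adjoint. For the right adjoint we argue directly: by \cref{recollection:functoriality_in_the_coefficients}, $\Res^{\hyp}_{B/A}$ is given by postcomposition with $\Res_{B/A}$. Since $\Res_{B/A}$ preserves all colimits and limits, it commutes with presheaf pullback $i_p\inv$. It also commutes with hypersheafification, because hypersheafification can be expressed via colimits and limits over hypercovers, and postcomposition with a functor preserving both commutes with it.

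This gives an equivalence $\Res^{\hyp}_{B/A} \circ i_p^{\ast,\hyp} \simeq i_p^{\ast,\hyp} \circ \Res^{\hyp}_{B/A}$. Combining it with \cref{Res_exact} on $X_p$, after shifting by $\pfrak(p)$, shows that $i_p^{\ast,\hyp}(F)$ is $\pfrak(p)$-connective if and only if $i_p^{\ast,\hyp}\Res^{\hyp}_{B/A}(F)$ is. Quantifying over $p$ gives the claim for connectivity.

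For the coconnective half, fix $p \in P$ and apply \cref{right_adjoint_and_!_closed_immersion} to the adjunction $L = B \otimes_A (-) \colon \Mod_A \rightleftarrows \Mod_B \noloc \Res_{B/A}$ and the locally closed immersion $i_p$, presented as in \cref{i_S_shrieck}. Its second square gives
\[ \Res^{\hyp}_{B/A} \circ i_p^{!,\hyp} \simeq i_p^{!,\hyp} \circ \Res^{\hyp}_{B/A} \ . \]
No constructibility hypothesis is needed here, unlike \cref{prop:uppershriek_and_constructibility}-(3), because we commute $!$-pullback with a right adjoint. \cref{Res_exact} on $X_p$ then says that $i_p^{!,\hyp}(F)$ lies in $\HSh(X_p;\Mod_B)_{\leq \pfrak(p)}$ if and only if $\Res^{\hyp}_{B/A} i_p^{!,\hyp}(F) \simeq i_p^{!,\hyp}\Res^{\hyp}_{B/A}(F)$ lies in $\HSh(X_p;\Mod_A)_{\leq \pfrak(p)}$. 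Ranging over $p \in P$ gives the statement.

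The only step needing care is the commutation of $\Res^{\hyp}$ with $i_p^{\ast,\hyp}$. If the argument via hypersheafification is felt too quick, there is a fallback. Since $\Res^{\hyp}_{B/A}$ is conservative and $t$-exact, we can check connectivity stalkwise using \cref{cor:t-structure_on_stalks}. The stalks of $\Res^{\hyp}_{B/A}(F)$ are the restrictions of the stalks of $F$, since stalks are filtered colimits and $\Res_{B/A}$ preserves these. Connectivity of $i_p^{\ast,\hyp}(F)$ is then read off from the stalks at points $x \in X_p$, which equal $x^{\ast,\hyp}F$.
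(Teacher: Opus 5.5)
Your proposal is correct and follows the paper's proof. The connective half commutes $\Res^{\hyp}_{B/A}$ with $i_p^{\ast,\hyp}$ and applies \cref{Res_exact}, and the coconnective half uses the second square of \cref{right_adjoint_and_!_closed_immersion} together with \cref{Res_exact}. One simplification: \cref{hyp_coeff_change_inverse_image} applies directly. Indeed, $\Res_{B/A}$ preserves colimits, so it is itself a left adjoint (to $\Hom_A(B,-)$), and $\Res^{\hyp}_{B/A}\simeq \id\otimes\Res_{B/A}$. Your hypersheafification argument and stalkwise fallback are therefore unnecessary, though both are fine.
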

\begin{proof}
    For connectiveness, the claim follows from Remarks \ref{Res_exact} and \ref{hyp_coeff_change_inverse_image}.
    For the coconnectiveness, the claim follows from \cref{Res_exact} and \cref{right_adjoint_and_!_closed_immersion}.
\end{proof}

\begin{lem}\label{Perv_scalar_restriction}
	Let $(X,P)$ be a  stratified space with $P$ finite and let $\pfrak \colon P \to \mathbb Z$ be a function.
	Let $A \to B$ be a morphism in $\dCRing$.
	Then, there is a pullback square in $\Cat_{\infty}$
	\[ \begin{tikzcd}[row sep=2.5em, column sep=3.5em]
		{}^{\pfrak}\mathrm{Perv}^{\hyp}_P(X;\Mod_B) \arrow{r}{\Res^{\hyp}_{B/A}} \arrow[hook]{d} &{}^{\pfrak}\mathrm{Perv}^{\hyp}_P(X;\Mod_A)\arrow[hook]{d}\\
		\HSh(X;\Mod_B)\arrow[r, "\Res^{\hyp}_{B/A}"'] & \HSh(X;\Mod_A) \ .
	\end{tikzcd} \]
\end{lem}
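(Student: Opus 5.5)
The square is a pullback as soon as two things hold: the vertical arrows are fully faithful, and the heart of $B$-modules is the preimage of the heart of $A$-modules under $\Res^{\hyp}_{B/A}$. We then use the standard fact that a square in $\Cat_\infty$ whose vertical functors are fully faithful inclusions of full subcategories is a pullback exactly when the left-hand subcategory is the preimage of the right-hand one under the bottom functor.

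First we recall why the square is well defined. The heart ${}^{\pfrak}\mathrm{Perv}^{\hyp}_P(X;\Mod_B)$ is by definition the full subcategory of $\HSh(X;\Mod_B)$ spanned by objects that are both $\ptauhypX$-connective and $\ptauhypX$-coconnective (\cref{notation:perverse_t_structure}). By \cref{Perv_scalar_restriction_prep}, $\Res^{\hyp}_{B/A}$ sends $\ptauhypX$-connective objects to $\ptauhypX$-connective objects, and likewise for coconnective ones. Hence the restriction of $\Res^{\hyp}_{B/A}$ to the heart factors through ${}^{\pfrak}\mathrm{Perv}^{\hyp}_P(X;\Mod_A)$, so the top horizontal arrow exists and the square commutes.

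Next, the pullback of the bottom row along the right vertical inclusion is, since that inclusion is fully faithful, the full subcategory of $\HSh(X;\Mod_B)$ spanned by those $F$ with $\Res^{\hyp}_{B/A}(F)$ in the $A$-heart. The comparison functor from ${}^{\pfrak}\mathrm{Perv}^{\hyp}_P(X;\Mod_B)$ to this pullback is therefore fully faithful. For essential surjectivity, suppose $\Res^{\hyp}_{B/A}(F)$ is both $\ptauhypX$-connective and $\ptauhypX$-coconnective. The converse directions of \cref{Perv_scalar_restriction_prep} then say that $F$ itself is both, so $F$ lies in the $B$-heart.

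There is no real obstacle here. All the content sits in \cref{Perv_scalar_restriction_prep}, which in turn rests on conservativity and $t$-exactness of $\Res^{\hyp}_{B/A}$ (\cref{Res_exact}) and on its commutation with $i_p^{\ast,\hyp}$ and $i_p^{!,\hyp}$ (\cref{hyp_coeff_change_inverse_image} and \cref{right_adjoint_and_!_closed_immersion}). The only care needed is to phrase the pullback of full subcategories correctly in $\Cat_\infty$, that is, up to equivalence, with the subcategories replete.
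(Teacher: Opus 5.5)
Your proposal is correct and follows the paper's route: the paper simply declares the lemma immediate from \cref{Perv_scalar_restriction_prep}. You spell out the implicit step, namely that a square of replete full-subcategory inclusions is a pullback exactly when the $B$-heart is the preimage of the $A$-heart under $\Res^{\hyp}_{B/A}$, and the two directions of that lemma give precisely this.
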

\begin{proof}
	Immediate from \cref{Perv_scalar_restriction_prep}.
\end{proof}

\begin{lem}\label{Perv_and_refinement}
	Let $(X,R)$ be a stratified space with $R$ finite and let $\phi \colon R \to P$ be a map of finite posets. 
    Let $\pfrak \colon P \to \mathbb Z$ be a function and let $\mathfrak r \coloneqq \pfrak \phi \colon R \to \mathbb Z$ be the composite.
	Let $A \in \dCRing$.
	Then, for every $F \in \HSh(X;\Mod_A)$ the following hold :
	\begin{enumerate}\itemsep=0.2cm
		\item $F$ is $\ptauhypX$-connective if and only if it is $\rtauhypX$-connective.
		\item $F$ is $\ptauhypX$-coconnective if and only if it is $\rtauhypX$-coconnective.
	\end{enumerate}
\end{lem}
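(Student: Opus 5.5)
For each $p \in P$, the stratum $X_p$ is stratified by the finite poset $R_p \coloneqq \phi^{-1}(p)$, with strata $X_r$ for $r \in R_p$. On $R_p$ the perversity $\mathfrak r$ is constant, equal to $\pfrak(p)$. I will compare the conditions stratum by stratum.

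\emph{Connectivity.} $F$ is $\ptauhypX$-connective if and only if $i_p^{\ast,\hyp}(F) \in \HSh(X_p;\Mod_A)_{\geq \pfrak(p)}$ for all $p$. By \cref{cor:t-structure_on_stalks}, a hypersheaf on $X_p$ is $\pfrak(p)$-connective if and only if all its stalks are. Stalks at points of $X_r \subset X_p$ can be computed after pulling back along $X_r \hookrightarrow X_p$, since pullbacks compose and $x^{\ast,\hyp}$ factors through $i_r^{\ast,\hyp}$. So the condition at $p$ is equivalent to $i_r^{\ast,\hyp}(F)$ being $\mathfrak r(r)$-connective for all $r \in R_p$. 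Ranging over $p$ gives $\rtauhypX$-connectivity. This direction is routine.

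\emph{Coconnectivity.} Here I use \cref{coconnectivity_and_strata}, applied to $G \coloneqq i_p^{!,\hyp}(F)$ on the stratified space $(X_p, R_p)$; $R_p$ is finite, hence noetherian. That lemma says $G$ is coconnective if and only if $i_{r}^{'!,\hyp}(G)$ is coconnective for each $r \in R_p$, where $i'_r \colon X_r \hookrightarrow X_p$. After shifting by $\pfrak(p)$, the same holds for $\pfrak(p)$-coconnectivity. It remains to identify $i_r^{'!,\hyp} \circ i_p^{!,\hyp} \simeq i_r^{!,\hyp}$, i.e.\ to check that $!$-pullbacks along the locally closed immersions $X_r \hookrightarrow X_p \hookrightarrow X$ compose.

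For this compatibility I would argue by adjunction: the lower-shriek functors $i_!^{\hyp} = j_!^{\hyp}\iota_\ast$ of \cref{locally_closed_lower_shriek} compose. To see this, write both immersions as closed-in-open and use the standard commutations: $\iota_\ast$ commutes with $j_!$ after base change, since for $U$ open and $Z$ closed in $X$, $Z\cap U \to U$ is closed, and $j_! \iota'_\ast \simeq \iota_\ast j'_!$, as one checks by recollement on stalks. Passing to right adjoints then gives the composition of $!$-pullbacks. I expect this bookkeeping to be the main (mild) obstacle. Alternatively, I can reduce to open and closed steps by choosing the presentation via $X_{\geq S}$ as in \cref{i_S_shrieck}, and use that $!$-pullbacks along closed immersions compose, since $i_\ast$ composes, and commute with open restriction.

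Combining these steps, $F$ is $\ptauhypX$-coconnective if and only if for all $p \in P$ and all $r \in R_p$, $i_r^{!,\hyp}(F)$ is $\mathfrak r(r)$-coconnective. This is exactly $\rtauhypX$-coconnectivity.
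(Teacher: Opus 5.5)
Your proposal is correct and is essentially the paper's proof, which is just the instruction to apply \cref{cor:t-structure_on_stalks} for connectivity and \cref{coconnectivity_and_strata} for coconnectivity. The latter is applied, as you do, to $i_p^{!,\hyp}(F)$ on $(X_p,\phi^{-1}(p))$, where $\mathfrak r$ is constant equal to $\pfrak(p)$; the paper uses the compatibility of $!$-pullbacks along $X_r \hookrightarrow X_p \hookrightarrow X$ tacitly, and your adjunction/base-change sketch for it is adequate.
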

\begin{proof}
	Apply \cref{cor:t-structure_on_stalks} and \cref{coconnectivity_and_strata}.
\end{proof}

The following lemma allows us to verify openness of the perverse $t$-structure.
    
\begin{lem}\label{lem:checking_perverse_on_stalks}
	Let $(X,P)$ be a stratified space with $P$ finite and let $\pfrak \colon P \to \bZ$ be a function.
	Then:
	\begin{enumerate}\itemsep=0.2cm
		\item\label{perverse_connective_stalks} A hypersheaf $F\in \HSh(X;\Mod_A)$ is \smash{$\ptauhypX$}-connective if and only if for each $p\in P$, the stalks of $i_p^{\ast,\hyp}(F)$ are $\pfrak(p)$-connective.
		
		\item\label{perverse_coconnective_stalks} A hypersheaf $F\in \HSh(X;\Mod_A)$ is \smash{$\ptauhypX$}-coconnective if and only if for each $p\in P$, the stalks of $i_p^{!,\hyp}(F)$ are $\pfrak(p)$-coconnective.
	\end{enumerate}
\end{lem}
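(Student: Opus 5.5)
This is a direct unwinding of \cref{def_t_structure_perv} combined with \cref{cor:t-structure_on_stalks}, applied stratum by stratum.

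For \eqref{perverse_connective_stalks}, by \cref{def_t_structure_perv}, $F$ is $\ptauhypX$-connective exactly when, for each $p \in P$, the hypersheaf $i_p^{\ast,\hyp}(F)$ lies in $\HSh(X_p;\Mod_A)_{\geqslant \pfrak(p)}$. This is a shift of the standard $t$-structure $\tau^{\hyp}_{X_p}$. So I apply \cref{cor:t-structure_on_stalks} on the topological space $X_p$ to the shifted hypersheaf $i_p^{\ast,\hyp}(F)[-\pfrak(p)]$. That corollary says connectivity can be tested on stalks, which is exactly the claim, since stalk functors commute with shifts.

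For \eqref{perverse_coconnective_stalks} the argument is the same, now using the coconnective half of \cref{def_t_structure_perv}. It requires $i_p^{!,\hyp}(F) \in \HSh(X_p;\Mod_A)_{\leqslant \pfrak(p)}$, and \cref{cor:t-structure_on_stalks} again reduces this condition to the stalks of $i_p^{!,\hyp}(F)$ on $X_p$ being $\pfrak(p)$-coconnective.

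The only point needing care is that \cref{cor:t-structure_on_stalks} is stated for the unshifted standard $t$-structure. I handle this by shifting, using that the standard $t$-structure shifts compatibly and that stalk functors are exact. There is no real obstacle: no regularity of the stratification is needed, because the stalks are taken of the hypersheaves $i_p^{\ast,\hyp}(F)$ and $i_p^{!,\hyp}(F)$ on $X_p$, not of $F$ itself.
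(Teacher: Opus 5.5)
Your proposal is correct and is essentially the paper's proof: unwind \cref{def_t_structure_perv} stratum by stratum and apply \cref{cor:t-structure_on_stalks} on each $X_p$. The shift by $\pfrak(p)$ is harmless because stalk functors commute with shifts, and in the coconnective case you correctly require $i_p^{!,\hyp}(F) \in \HSh(X_p;\Mod_A)_{\leqslant \pfrak(p)}$, where the paper's proof writes $\geq$ by a typo.
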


\begin{proof}
    By definition $F$ is \smash{$\ptauhypX$}-(co)connective if and only if for each $p \in P$ one has $i_p^{\ast,\hyp}(F) \in \HSh(X_p;\Mod_A)_{\geq \pfrak(p)}$ (resp., $i_p^{!,\hyp}(F) \in \HSh(X_p;\Mod_A)_{\geq \pfrak(p)}$). Thus, the conclusion follows from \cref{cor:t-structure_on_stalks}.
\end{proof}

%


\subsection{Flatness}

From this point on, we make heavy use of the notations from \cref{projection_formula}.
The following is the translation of \cite[Definition II.2.44]{DPS_Stable_pairs} in our special setting:

\begin{defin}\label{def:perverse_flatness}
	Let $(X,P)$ be a  stratified space with P finite and let $\pfrak \colon P \to \mathbb Z$ be a function.
	For $A \in \dCRing$, we say that an $A$-linear hypersheaf $F \in \HSh(X;\Mod_A)$ is \emph{\smash{$\ptauhypX$}-flat relative to $A$} if for each $M \in \Mod_A^\heartsuit$, we have
	\[ M \otimes_A^{\hyp} F \in {}^{\pfrak} \mathrm{Perv}^{\hyp}(X;\Mod_A) \ . \]
\end{defin}

\begin{warning}
	Recall that $A$ lies in $\Mod_A^\heartsuit$ if and only if the natural map $A\to \pi_0(A)$ is an equivalence, that is when $A$ is \textit{underived}.	
	In that case, a \smash{$\ptauhypX$}-flat object automatically lies in ${}^{\pfrak} \mathrm{Perv}^{\hyp}(X;\Mod_A)$.
	However, unless $A$ is a field, objects in ${}^{\pfrak} \mathrm{Perv}^{\hyp}(X;\Mod_A)$ are not automatically $A$-flat.
\end{warning}

\begin{lem}[{\cite[Lemma II.2.45]{DPS_Stable_pairs}}]\label{tau_flatness_stable_under_pullback}
     Let $(X,P)$ be a  stratified space with $P$ finite and let $\pfrak \colon P \to \mathbb Z$ be a function.
     Let $A \to B$ be a morphism in $\dCRing$ and let $F \in \HSh(X;\Mod_A)$.
     If $F$ is \smash{$\ptauhypX$}-flat relative to $A$, then $B\otimes_A^{\hyp} F$ is \smash{$\ptauhypX$}-flat relative to $B$.
\end{lem}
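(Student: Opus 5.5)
We reduce the flatness of $B\otimes_A^{\hyp} F$ to the flatness of $F$ itself, using restriction of scalars and the projection formula. Let $N \in \Mod_B^\heartsuit$. We must show that
\[ N \otimes_B^{\hyp} (B \otimes_A^{\hyp} F) \in {}^{\pfrak}\mathrm{Perv}^{\hyp}(X;\Mod_B) \ . \]
By \cref{Perv_scalar_restriction} (equivalently \cref{Perv_scalar_restriction_prep}), it is enough to check that the restriction
\[ \Res^{\hyp}_{B/A}\big(N \otimes_B^{\hyp} (B \otimes_A^{\hyp} F)\big) \in \HSh(X;\Mod_A) \]
is both $\ptauhypX$-connective and $\ptauhypX$-coconnective.

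Next we apply the projection formula \eqref{eq:projection_formula} with $M = N$. It identifies the object above with
\[ \Res_{B/A}(N) \otimes_A^{\hyp} F \ . \]
The restriction functor $\Res_{B/A} \colon \Mod_B \to \Mod_A$ is $t$-exact for the standard $t$-structures, since the homotopy groups of a $B$-module are computed on the underlying spectrum. Hence $\Res_{B/A}(N) \in \Mod_A^\heartsuit$. Since $F$ is $\ptauhypX$-flat relative to $A$, \cref{def:perverse_flatness} gives
\[ \Res_{B/A}(N) \otimes_A^{\hyp} F \in {}^{\pfrak}\mathrm{Perv}^{\hyp}(X;\Mod_A) \ , \]
which is exactly what we needed.

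The only step requiring care is the projection formula \eqref{eq:projection_formula}, and this is already recorded in \cref{projection_formula}. It follows by checking the equivalence on presheaves, where it reduces to the module-level identity $\Res_{B/A}(M) \otimes_A G \simeq \Res_{B/A}(M \otimes_B (B \otimes_A G))$, and then hypersheafifying. Here one uses that $\Res^{\hyp}_{B/A}$ is given by postcomposition with $\Res_{B/A}$ and commutes with hypersheafification, since $\Res_{B/A}$ preserves limits and colimits. The remaining point is compatibility of $\Res_{B/A}$ with the hearts, which is immediate from $t$-exactness. No finiteness or constructibility hypotheses on $(X,P)$ are needed beyond $P$ finite, which is required to define the perverse $t$-structure.
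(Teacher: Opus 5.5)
Your proof is correct and is the paper's argument: the paper's proof just cites \cref{Perv_scalar_restriction_prep} together with the projection formula \eqref{eq:projection_formula}. You spell out the details: reduce to $\Res^{\hyp}_{B/A}$, identify the result with $\Res_{B/A}(N)\otimes_A^{\hyp} F$, and use that $\Res_{B/A}$ preserves hearts.
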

\begin{proof}
     Immediate from \cref{Perv_scalar_restriction_prep} and the projection formula from \cref{projection_formula}.
\end{proof}

\cref{prop:perverse_flatness_reformulation} provides a stalkwise reformulation of $\ptau$-flatness.

\begin{lem}\label{shriek_and_tensor_product}
	Let $(X,P)$ be a conically stratified space with locally weakly contractible strata and let $S\subset P$ be a locally closed subset. 
	Assume that $(X,P)$ is locally categorically compact and  that $P$ is finite.
	Then for every $A \in \dCRing$, every $M \in \Mod_A$, and every $F \in \Cons_P(X;\Mod_A)$, the natural transformation 		
		\[  M \otimes_A^{\hyp} i_S^{!,\hyp}(F) \longrightarrow i_S^{!,\hyp}( M \otimes_A^{\hyp} F )  \] is an equivalence.
\end{lem}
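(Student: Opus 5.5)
The plan is to deduce the statement from \cref{prop:uppershriek_and_constructibility}-\eqref{prop:uppershriek_and_constructibility:uppershriek_change_of_coefficients}, applied to the colimit-preserving functor
\[ L \coloneqq M \otimes_A (-) \colon \Mod_A \to \Mod_A \ . \]
This functor is a morphism of presentable stable $\infty$-categories, and $\Mod_A$ is compactly generated, so the hypotheses on $(X,P)$ are exactly those of that proposition. By \cref{projection_formula}, $L^{\hyp}$ is precisely $M \otimes_A^{\hyp}(-)$. It then suffices to check two things. First, the natural transformation in the statement is the exchange transformation
\[ L^{\hyp} \circ i_S^{!,\hyp} \to i_S^{!,\hyp} \circ L^{\hyp} \]
associated to the commutative square of \cref{prop:uppershriek_and_constructibility}-\eqref{prop:uppershriek_and_constructibility:uppershriek_change_of_coefficients}. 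Second, $F$ is $P$-hyperconstructible.

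I will carry this out in order. First, I identify $\Cons_P(X;\Mod_A)$ with $\ConsPhyp(X;\Mod_A)$; hyper and non-hyper agree in this setting, or this is simply the paper's notation. Second, I recall that the square $L^{\hyp} i_{S,!}^{\hyp} \simeq i_{S,!}^{\hyp} L^{\hyp}$ exists by \cref{right_adjoint_and_!_closed_immersion}. Third, I construct the Beck--Chevalley map from the counit $i_{S,!}^{\hyp} i_S^{!,\hyp} \to \id$: apply $L^{\hyp}$, use the square to get $i_{S,!}^{\hyp} L^{\hyp} i_S^{!,\hyp}(F) \to L^{\hyp}(F)$, and take the adjoint. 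I then check that this is the \enquote{natural transformation} meant in the statement. Finally, I invoke the proposition to conclude that the map is an equivalence.

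The main obstacle is the identification of maps in the third step. The statement's map could a priori be defined differently, for instance through the presentation $i_S = i_{\geqslant S} \circ \iota_S$ of \cref{i_S_shrieck}. If so, I would reduce to each factor separately. For the open immersion, $j^{\ast,\hyp}$ commutes with $L^{\hyp}$ by \cref{hyp_coeff_change_inverse_image}. For the closed immersion $\iota_S$, the exchange map can be compared via the recollement fiber sequence $\iota^{!,\hyp} \to \iota^{\ast,\hyp} \to \iota^{\ast,\hyp} j_\ast j^{\ast,\hyp}$, using the uniqueness of adjoint exchange maps up to equivalence.

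If one wanted to avoid the general proposition, there is an alternative: reduce via this same fiber sequence to showing that $j_\ast$ commutes with $M \otimes_A^{\hyp}(-)$ on constructible objects. This should follow from exodromy, which identifies $j_\ast$ with a right Kan extension along finite, categorically compact exit-path data, since finite limits commute with $M \otimes_A(-)$ in the stable setting. I would only pursue this route if the direct application fails.
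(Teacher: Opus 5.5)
Your proposal is correct and follows essentially the paper's own proof: apply \cref{prop:uppershriek_and_constructibility}-\eqref{prop:uppershriek_and_constructibility:uppershriek_change_of_coefficients} to $L = M \otimes_A (-) \colon \Mod_A \to \Mod_A$, for which $L^{\hyp} = M \otimes_A^{\hyp}(-)$. Your extra checks on identifying the exchange transformation, and the alternative route via $j_\ast$, are harmless but not needed, since the map in the statement is by definition that exchange transformation.
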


\begin{proof}
	Apply \cref{prop:uppershriek_and_constructibility}-\eqref{prop:uppershriek_and_constructibility:uppershriek_change_of_coefficients} to $L=M \otimes_A (-) : \Mod_A \to \Mod_A$.
\end{proof}

\personal{
     Ok with the fact that connectivness coincides with positive $ \Tor $-amplitude for eventually connective objects.  
     To keep the symmetry between the connective and coconnective stalkwise criteria for flatness below,  let me stick to the tor amplitude viewpoint,  as this is the only one relevant for moduli.
\begin{lem}\label{tor_spectral}
    Let $A \in \dCRing$.
    Let $F\in \Mod_A$ and assume that $\pi_i(F) \simeq 0$ for $i \ll 0$.
    For every $k\in \Z$,  the following are equivalent : 
\begin{enumerate}\itemsep=0.2cm
			\item $F$ is $k$-connective, that is it belongs to $(\Mod_A)_{\geqslant k}$;
			\item for every $M \in \Mod_A^\heartsuit$, the module $M \otimes_A F$ is $k$-connective, that is $F$ has $ \Tor $-amplitude $[k, +\infty)$.
			\item $\pi_0(A)\otimes_A F$ is $k$-connective.
	\end{enumerate}
\end{lem}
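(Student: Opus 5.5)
The plan is to prove the cycle of implications (1) $\Rightarrow$ (2) $\Rightarrow$ (3) $\Rightarrow$ (1). Throughout, $F$ is eventually connective, i.e.\ $\pi_i(F) \simeq 0$ for $i \ll 0$.

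\emph{Step (1) $\Rightarrow$ (2).} Let $M \in \Mod_A^\heartsuit$. Then $M$ is connective, and the tensor product over a connective ring of two connective modules is connective, so $(\Mod_A)_{\geqslant 0} \otimes_A (\Mod_A)_{\geqslant k} \subset (\Mod_A)_{\geqslant k}$. This gives the claim at once.

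\emph{Step (2) $\Rightarrow$ (3).} This is immediate, since $\pi_0(A) \in \Mod_A^\heartsuit$.

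\emph{Step (3) $\Rightarrow$ (1).} This is the only step with content. Suppose $F$ is not $k$-connective. Because $F$ is eventually connective, there is a smallest integer $n$ with $\pi_n(F) \neq 0$, and we have $n < k$. Then $F \in (\Mod_A)_{\geqslant n}$, so $F[-n]$ is connective. For connective modules over the connective ring $A$ we have the standard identity
\[ \pi_0\big(\pi_0(A) \otimes_A F[-n]\big) \simeq \pi_0(A) \otimes_{\pi_0(A)} \pi_0(F[-n]) \simeq \pi_0(F[-n]) \ , \]
which follows from the Tor spectral sequence, or directly from right $t$-exactness of $\pi_0(A)\otimes_A -$ together with the computation on $\tau_{\leqslant 0}$ \cite[Corollary 7.2.1.23]{Lurie_Higher_algebra}. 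It follows that $\pi_n(\pi_0(A) \otimes_A F) \simeq \pi_n(F) \neq 0$. Since $n < k$, this contradicts the $k$-connectivity of $\pi_0(A) \otimes_A F$ assumed in (3).

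The only subtle point is that the eventual connectivity hypothesis is used to choose the minimal $n$. Without it, a module with homotopy unbounded below could have vanishing $\pi_0(A)\otimes_A F$ in low degrees, and the argument breaks down. I will therefore state explicitly where this hypothesis enters. Finally, I will record that condition (2) is precisely the definition of $\Tor$-amplitude in $[k,+\infty)$, so nothing further needs checking there.
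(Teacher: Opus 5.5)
Your proof is correct and takes the same approach as the paper, whose entire proof is ``Immediate from the Tor spectral sequence \cite[Corollary 7.2.1.23]{Lurie_Higher_algebra}''. You simply make explicit the bottom-degree computation $\pi_n(\pi_0(A)\otimes_A F)\simeq \pi_n(F)$ and where the eventual connectivity hypothesis enters, both of which the paper leaves implicit.
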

\begin{proof}
Immediate from the Tor spectral sequence \cite[Corollary 7.2.1.23]{Lurie_Higher_algebra}.
\end{proof}
}

\begin{theorem}\label{prop:perverse_flatness_reformulation}
	Let $(X,P)$ be a conically stratified space with locally weakly contractible strata and let $\pfrak \colon P \to \mathbb Z$ be a function.
	Assume that $(X,P)$ is locally categorically compact and that $P$ is finite.
	Let $A \in \dCRing$ and let $F \in \Conshyp_{P}(X;\Mod_A)$. 
	Then the following statements are equivalent:
	\begin{enumerate}\itemsep=0.2cm
		\item The hypersheaf $F$ is $\ptauhypX$-flat relative to $A$.
		
		\item For each $p \in P$, the stalks of $i_p^{\ast,\hyp}(F)$ have $ \Tor $-amplitude contained in $[\pfrak(p),+\infty)$ (equivalently, they are $\pfrak(p)$-connective) and the stalks of $i_p^{!,\hyp}(F)$ have $ \Tor $-amplitude contained in $(-\infty,\pfrak(p)]$.
		\personal{Mauro: it's really weird to say that something has $ \Tor $-amplitude in $[a,+\infty)$, as this is really equivalent to $a$-connectivity. On the other hand, having $ \Tor $-amplitude in $(-\infty,a]$ is very different from $a$-coconnectivity.}
	\end{enumerate}
\end{theorem}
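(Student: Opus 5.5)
The plan is to unwind \cref{def:perverse_flatness} through the stalkwise criterion \cref{lem:checking_perverse_on_stalks}, and to move $M \otimes_A^{\hyp}(-)$ past both $i_p^{\ast,\hyp}$ and $i_p^{!,\hyp}$. For the $\ast$-restriction we use \cref{hyp_coeff_change_inverse_image}, i.e.\ $L^{\hyp}$ commutes with hypersheaf pullback. For the $!$-restriction we use \cref{shriek_and_tensor_product} with $S=\{p\}$, and this is the only place where the hypotheses (conical, locally weakly contractible strata, locally categorically compact, $P$ finite) enter. Stalks are also computed by a hypersheaf pullback along a point, so for every $x \in X_p$ and every $M \in \Mod_A$ we obtain
\[ x^{\ast}\, i_p^{\ast,\hyp}(M \otimes_A^{\hyp} F) \simeq M \otimes_A x^\ast i_p^{\ast,\hyp}(F) , \qquad x^{\ast}\, i_p^{!,\hyp}(M \otimes_A^{\hyp} F) \simeq M \otimes_A x^\ast i_p^{!,\hyp}(F) . \]
By \cref{lem:checking_perverse_on_stalks}, $M\otimes_A^{\hyp}F$ lies in the perverse heart if and only if, for all $p$ and all $x \in X_p$, the module $M\otimes_A x^\ast i_p^{\ast,\hyp}(F)$ is $\pfrak(p)$-connective and the module $M\otimes_A x^\ast i_p^{!,\hyp}(F)$ is $\pfrak(p)$-coconnective.

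Letting $M$ range over $\Mod_A^\heartsuit$, condition (1) becomes the following: for every discrete $M$, the modules $M\otimes_A F^\ast_x$ are $\pfrak(p)$-connective and the modules $M\otimes_A F^!_x$ are $\pfrak(p)$-coconnective, where $F^\ast_x \coloneqq x^\ast i_p^{\ast,\hyp}(F)$ and $F^!_x \coloneqq x^\ast i_p^{!,\hyp}(F)$. By definition of $\Tor$-amplitude, this is exactly the statement that $F^\ast_x$ has $\Tor$-amplitude in $[\pfrak(p),+\infty)$ and $F^!_x$ has $\Tor$-amplitude in $(-\infty,\pfrak(p)]$. So (1)$\Leftrightarrow$(2) is essentially formal once the commutations above are established.

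It remains to justify the parenthetical claim in (2): a module $N$ has $\Tor$-amplitude in $[a,+\infty)$ if and only if $N$ is $a$-connective.
\begin{itemize}
\item If $N$ is $a$-connective, then $M\otimes_A N$ is $a$-connective for every connective $M$, since tensor products of connective modules over a connective ring are connective.
\item Conversely, take $M=\pi_0(A)$ and consider $\pi_0(A)\otimes_A N$. If $N$ is bounded below, the Tor spectral sequence (or simply the lowest homotopy group: if $\pi_k N\neq 0$ is the lowest nonzero one, then $\pi_k(\pi_0 A\otimes_A N)\simeq \pi_0A\otimes_{\pi_0 A}\pi_k N = \pi_k N$) forces $N$ to be $a$-connective.
\item To avoid the bounded-below hypothesis, use that $\Mod_A$ is right complete: write $N \simeq \colim_n \tau_{\geq -n}N$ and apply $\pi_0(A)\otimes_A(-)$ to the cofiber sequence $\tau_{\geq b}N\to N\to\tau_{<b}N$, where $b$ is the lowest degree below $a$ at which $\pi_b N\neq 0$. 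The term $\tau_{<b}N$ then contributes nothing in degree $b$, while $\pi_b(\pi_0 A\otimes_A \tau_{\geq b}N)=\pi_b N\neq 0$.
\end{itemize}

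The main obstacle is the $!$-commutation. It is exactly \cref{shriek_and_tensor_product}, but it requires $F$ to be $P$-constructible, and $M\otimes_A F$ must be handled inside the same framework. This is fine, because $M\otimes_A^{\hyp}(-)$ preserves hyperconstructibility: it commutes with $i_p^{\ast,\hyp}$ and preserves local hyperconstancy. A second subtle point is that the stalk of $i_p^{!,\hyp}$ commutes with $M\otimes$ only after passing through \cref{shriek_and_tensor_product}; the stalk itself is a pullback and commutes with $M\otimes$ automatically.
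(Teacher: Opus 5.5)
Your proof of (1)$\Leftrightarrow$(2) in its $\Tor$-amplitude form is correct and is the paper's argument. Like the paper, you apply \cref{lem:checking_perverse_on_stalks}, move $M\otimes_A^{\hyp}(-)$ past $i_p^{\ast,\hyp}$ by \cref{hyp_coeff_change_inverse_image} and past $i_p^{!,\hyp}$ by \cref{shriek_and_tensor_product}, and then pass to stalks; the paper leaves that last step implicit. Your worry about $M\otimes_A^{\hyp}F$ staying constructible is unnecessary: \cref{shriek_and_tensor_product} only needs $F$ constructible, and \cref{lem:checking_perverse_on_stalks} holds for arbitrary hypersheaves.

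The genuine problem is your third bullet, which tries to prove the parenthetical ``$\Tor$-amplitude in $[a,+\infty)$ iff $a$-connective'' without a bounded-below hypothesis. First, if $N$ is not bounded below, there is no ``lowest'' $b<a$ with $\pi_b N\neq 0$. More seriously, the claim that $\tau_{<b}N$ contributes nothing in degree $b$ after applying $\pi_0(A)\otimes_A(-)$ fails when $A$ is not discrete. That functor is right $t$-exact but not left $t$-exact; for instance $\pi_2(\pi_0(A)\otimes_A\pi_0(A))\cong\pi_1(A)$.

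No repair is possible, because the claim is false for modules that are not bounded below. Take $A=\mathbb{Q}[t]$ with $t$ in homological degree $2$, and $N=A[t^{-1}]$. Every discrete $A$-module $M$ is a $\pi_0(A)=\mathbb{Q}$-module. Since $\pi_0(A)\simeq\mathrm{cofib}(t\colon A[2]\to A)$ and $t$ acts invertibly on $N$, we get $\pi_0(A)\otimes_A N\simeq 0$, hence $M\otimes_A N\simeq 0$. So $N$ has $\Tor$-amplitude in $[a,+\infty)$ for every $a$, yet $\pi_{2n}N\simeq\mathbb{Q}$ for all $n\in\mathbb{Z}$.

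Taking $X$ to be a point with $\pfrak=0$ shows that the parenthetical in (2) needs an extra hypothesis. It holds when the stalks of $i_p^{\ast,\hyp}(F)$ are bounded below, for example perfect, which is the only case used later in \cref{Perv_as_pullback}; it also holds when $A$ is discrete. Your first two bullets prove it correctly under that hypothesis. The paper's proof never addresses the parenthetical, so you should drop the third bullet and state the bounded-below restriction explicitly.
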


\begin{proof}
     By \cref{lem:checking_perverse_on_stalks}, the hypersheaf  $F$ is $\ptauhypX$-flat relative to $A$ if and only if for every $M \in \Mod_A^\heartsuit$ and every $p\in P$,  the stalks of $i_p^{\ast,\hyp}(M\otimes_A^{\hyp} F)$ are $\pfrak(p)$-connective and the stalks of $i_p^{!,\hyp}(M \otimes_A^{\hyp} F)$ are $\pfrak(p)$-coconnective.
     By \cref{hyp_coeff_change_inverse_image}, we have
	\[ i_p^{\ast,\hyp}(M\otimes_A^{\hyp} F)\simeq M\otimes_A^{\hyp} i_p^{\ast,\hyp}(F) \ . \] 
     Furthermore, \cref{shriek_and_tensor_product} gives
	\[ i_p^{!,\hyp}(M\otimes_A^{\hyp} F)\simeq M \otimes_A^{\hyp} i_p^{!,\hyp}(F) \ . \]
     The conclusion thus follows.
\end{proof}

\begin{rem}
	It should be noted that, although the proof is simple, the whole difficulty is hidden in \cref{shriek_and_tensor_product}, which ultimately relies on the key computation performed in \cite[Proposition 6.7.5]{Porta_Teyssier_Exodromy}.
	The proof of the latter result relies on a careful analysis of the geometry of conically stratified spaces.
\end{rem}

The following is the translation of \cite[Lemma II.2.46]{DPS_Stable_pairs} in our setting:

\begin{cor}\label{tau_flatness_flat_hyperdescent}
	In the setting of \cref{prop:perverse_flatness_reformulation}, let $A \to B$ be a faithfully flat map in $\dCRing$.
	Then $F \in \Conshyp_P(X;\Mod_A)$ is \smash{$\ptauhypX$}-flat relative to $A$ if and only if $B \otimes_A^{\hyp} F$ is \smash{$\ptauhypX$}-flat relative to $B$.
\end{cor}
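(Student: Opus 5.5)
The plan is to reduce the statement to a stalkwise statement about $\Tor$-amplitude via \cref{prop:perverse_flatness_reformulation}, and then invoke faithfully flat descent for $\Tor$-amplitude of modules.

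The forward implication needs no such reduction. It is \cref{tau_flatness_stable_under_pullback} applied to $A \to B$.

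For the converse, assume $B \otimes_A^{\hyp} F$ is $\ptauhypX$-flat relative to $B$. First observe that $B \otimes_A^{\hyp} F$ is again $P$-hyperconstructible. Indeed, $B\otimes_A^{\hyp}(-)$ commutes with $i_p^{\ast,\hyp}$ by \cref{hyp_coeff_change_inverse_image}, and it carries locally hyperconstant hypersheaves to locally hyperconstant ones, since it commutes with $\Gamma^{\ast,\hyp}$ on each open. So \cref{prop:perverse_flatness_reformulation} applies on both sides. It tells us that for every $p\in P$ and every point $x \in X_p$, the $B$-modules
\[ x^{\ast}\, i_p^{\ast,\hyp}(B\otimes_A^{\hyp}F) \quad\text{and}\quad x^{\ast}\, i_p^{!,\hyp}(B\otimes_A^{\hyp}F) \]
have $\Tor$-amplitude in $[\pfrak(p),+\infty)$ and $(-\infty,\pfrak(p)]$ respectively. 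We then identify these stalks with base changes of the corresponding $A$-modules:
\begin{itemize}
\item For the $\ast$-restriction, use \cref{hyp_coeff_change_inverse_image} together with the fact that stalks are $\ast$-pullbacks along points, which gives $x^\ast i_p^{\ast,\hyp}(B\otimes^{\hyp}_A F)\simeq B\otimes_A x^\ast i_p^{\ast,\hyp}(F)$.
\item For the $!$-restriction, use \cref{prop:uppershriek_and_constructibility}-\eqref{prop:uppershriek_and_constructibility:uppershriek_change_of_coefficients} with $L = B\otimes_A(-)\colon \Mod_A\to\Mod_B$. This is where the conical, locally categorically compact and finite-$P$ hypotheses enter. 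It gives $i_p^{!,\hyp}(B\otimes_A^{\hyp}F)\simeq B\otimes_A^{\hyp} i_p^{!,\hyp}(F)$, and taking stalks commutes with $B\otimes_A$ as before.
\end{itemize}

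It then remains to prove a purely algebraic fact. Let $A\to B$ be faithfully flat and $N\in\Mod_A$. If $B\otimes_A N$ has $\Tor$-amplitude in an interval $I$ over $B$, then $N$ has $\Tor$-amplitude in $I$ over $A$. The argument goes as follows. For $M\in\Mod_A^\heartsuit$ we have
\[ B\otimes_A(M\otimes_A N)\simeq (B\otimes_A M)\otimes_B(B\otimes_A N). \]
Here $B\otimes_A M$ lies in $\Mod_B^\heartsuit$ by flatness of $B$, so the right-hand side has homotopy concentrated in $I$. Since $B$ is faithfully flat, $\pi_i(B\otimes_A E)\simeq \pi_0B\otimes_{\pi_0A}\pi_i E$ for any $E$. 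Faithful flatness of $\pi_0 A\to\pi_0 B$ then shows that $\pi_i(M\otimes_A N)$ vanishes exactly where $\pi_i$ of the base change does. Hence $M\otimes_A N$ has homotopy in $I$. Finally, applying \cref{prop:perverse_flatness_reformulation} over $A$ concludes.

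The main obstacle I expect is the identification of the $!$-restriction with a base change. That step depends entirely on \cref{prop:uppershriek_and_constructibility}-\eqref{prop:uppershriek_and_constructibility:uppershriek_change_of_coefficients}, which holds only on hyperconstructible objects, so I will need to check carefully that $F$ itself is $P$-hyperconstructible, which is given. The remaining steps are formal.
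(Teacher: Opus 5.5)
Your proposal is correct and follows the paper's proof. The forward direction is \cref{tau_flatness_stable_under_pullback}, and the converse is reduced via \cref{prop:perverse_flatness_reformulation}, \cref{hyp_coeff_change_inverse_image} and \cref{prop:uppershriek_and_constructibility}-\eqref{prop:uppershriek_and_constructibility:uppershriek_change_of_coefficients} to faithfully flat descent of $\Tor$-amplitude. The paper cites that last algebraic fact (Lurie, SAG, Proposition 2.8.4.2), whereas you prove it by hand and also spell out why $B \otimes_A^{\hyp} F$ stays $P$-hyperconstructible.
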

\begin{proof}
	The ``only if'' is the content of \cref{tau_flatness_stable_under_pullback}.
	By \cref{prop:perverse_flatness_reformulation}, \cref{hyp_coeff_change_inverse_image} and \cref{prop:uppershriek_and_constructibility}-\eqref{prop:uppershriek_and_constructibility:uppershriek_change_of_coefficients}, the ``if'' reduces to the fact that $ \Tor $-amplitude of given range is a fpqc local property \cite[Proposition 2.8.4.2]{Lurie_SAG}.
\end{proof}

\section{Perverse character stacks}\label{moduli}

We approach the main representability result.
Its proof combines the finiteness theorem from \cite{Beyond_conicality} (see Theorems \ref{thm:subanalytic_stratified_spaces_are_conically_refineable} \& \ref{thm:algebraic_stratified_spaces_are_conically_refineable_and_categorically_finite}) with the characterization of perverse flatness of \cref{prop:perverse_flatness_reformulation}.

\begin{convention}
	Throughout this whole section, we fix $k \in \dCRing$.
	All derived stacks should tacitly be understood to be over $k$.
\end{convention}


\subsection{Moduli of constructible complexes}

We first construct a derived geometric stack, locally of finite type, parametrizing complexes of constructible sheaves on a fixed (sufficiently nice) stratified space $(X,P)$.
This is not a $1$-Artin stack, but rather a higher stack, in the sense of Simpson \cite{Simpson_Algebraic_1996}.
Following the general philosophy of \cite[\S II.2]{DPS_Stable_pairs}, using the perverse $t$-structure, we cut out the $1$-Artin stack of perverse sheaves as an open in the larger stack of complexes of constructible sheaves.

\begin{notation}
	Given a stratified space $ (X,P) $ and $A \in \dCRing_k$, we write 
	\begin{equation*}
		\Conshyp_{P,\omega}(X; \Mod_A) \subset \Conshyp_{P}(X; \Mod_A)
	\end{equation*}
	for the full subcategory spanned by the hyperconstructible hypersheaves on $ (X,P) $ whose stalks belong to $\Perf_A$.
\end{notation}

\begin{defin}
	Let $(X,P)$ be a stratified space.
	Let  
	\[ \bfConsPhyp(X) \colon \dAff_k\op \to \cS \]
	be the derived prestack sending $S = \Spec(A) \in \dAff_k$ to the maximal $\infty$-groupoid contained in $\Conshyp_{P,\omega}(X; \Mod_A)$.
	The functoriality is induced by the extension of scalars, discussed in \cref{projection_formula}.
	When $P = \ast$, we write $\bfLoc(X)$ instead of $\bfCons_\ast(X)$.
\end{defin}

\begin{rem}
	Given a map of stratified spaces $f \colon (X,P) \to (Y,Q)$, the functor $f^{\ast,\hyp}$ of \cref{rem:pullback_functoriality} induces a morphism of derived prestacks $\bfConsQhyp(Y)\to \bfConsPhyp(X)$.
\end{rem}

\begin{lem}\label{lem:descent_ConsP_X}
	Let $(X,P)$ be a  stratified space and let $k$ be a derived commutative ring.
	Then, the presheaf 
	 \[ \bfConsPhyp(-) \colon \Open(X)\op \to \PSh(\dAff_k) \]
 	satisfies hyperdescent.
\end{lem}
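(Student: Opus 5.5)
Since limits in $\PSh(\dAff_k)$ are computed objectwise, the plan is to fix $S=\Spec(A)\in\dAff_k$ and prove that
\[ U \longmapsto \Conshyp_{P,\omega}(U_{\,};\Mod_A)^{\simeq} \]
is a hypersheaf of spaces on $X$. Here $U$ carries the induced stratification $U\to P$. Taking maximal subgroupoids commutes with limits of $\infty$-categories, so it suffices to show that the categorical version
\[ U \longmapsto \Conshyp_{P,\omega}(U;\Mod_A) \]
is a hypersheaf of $\infty$-categories. We will also check that the equivalences produced are compatible with the extension-of-scalars functoriality in $A$. This is automatic, because $B\otimes^{\hyp}_A(-)$ commutes with the restriction functors $j^{\ast,\hyp}$ (\cref{hyp_coeff_change_inverse_image}).

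First, the ambient functor $U\mapsto \HSh(U;\Mod_A)$ is a hypersheaf of $\infty$-categories. This is standard: $U\mapsto \HSh(U)$ satisfies hyperdescent for open hypercovers, since $\HSh(X)_{/h_U}\simeq\HSh(U)$ and the $\infty$-topos of hypersheaves is hypercomplete. Tensoring with $\Mod_A$ in $\PrL$ preserves limits along the right adjoints $j_\ast$. Equivalently, via $\HSh(U;\Mod_A)\simeq \FunR(\HSh(U)\op,\Mod_A)$, one can argue directly with the sheaf condition on $\Mod_A$-valued hypersheaves. Given an open hypercover $U_\bullet\to U$, this yields
\[ \HSh(U;\Mod_A)\xrightarrow{\sim}\lim_{[n]\in\Delta}\HSh(U_n;\Mod_A). \]

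Second, it remains to see that the full subcategories of $P$-hyperconstructible hypersheaves with perfect stalks are cut out locally. In other words, $F\in\HSh(U;\Mod_A)$ lies in $\Conshyp_{P,\omega}(U;\Mod_A)$ if and only if each $F|_{U_0\cap V}$ does, for the components $V$ of the cover. Two conditions must be checked.
\begin{itemize}
\item Perfectness of stalks: this is pointwise. The stalk of $F$ at $x$ agrees with that of $F|_{V}$ for any open $V\ni x$, and the $U_0$ cover $U$.
\item Hyperconstructibility: $i_p^{\ast,\hyp}$ commutes with restriction to opens, and $(U_0)_p$ covers $U_p$. Local hyperconstancy is by definition local on the stratum, so an open cover of $U_p$ on whose members $i_p^{\ast,\hyp}F$ is locally hyperconstant refines to one witnessing local hyperconstancy on $U_p$.
\end{itemize}
A limit of full subcategories defined by a condition that is local in this sense is the full subcategory of the limit on objects satisfying the condition. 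Hence the descent equivalence restricts.

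The main subtle point is the first step, namely hyperdescent of $\HSh(-;\Mod_A)$ for general (non-hypercomplete-sensitive) hypercovers, rather than merely Čech descent. I would cite that $\HSh(X)$ is a hypercomplete $\infty$-topos, so $\HSh(-)$ is a hypersheaf on $\Open(X)$. I would then transfer this to $\Mod_A$ coefficients using that $-\otimes\Mod_A$ preserves limits in $\PrL$ computed in $\widehat{\Cat}_\infty$ via right adjoints, since $\Mod_A$ is dualizable. Everything else is formal.
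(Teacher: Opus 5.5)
Your proposal is correct and follows essentially the same route as the paper: hyperdescent of $\HSh(-;\Mod_A)$, combined with the fact that hyperconstructibility and perfectness of stalks are local conditions. The only minor difference is that you transfer hyperdescent to $\Mod_A$-coefficients using dualizability of $\Mod_A$, whereas the paper simply invokes hyperdescent of $\HSh(-;\mathcal E)$ for arbitrary $\mathcal E\in\PrL$.
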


\begin{proof}
     Since constructibility and compactness of stalks can be checked locally, the statement follows from hyperdescent of $\HSh(-,\cE)$, which holds for every $\cE \in \PrL$.
\end{proof}

Our first task is to compare the derived prestack \smash{$\bfConsPhyp(X)$} with the \emph{moduli of objects} of the (large) $\infty$-category $\ConsPhyp(X;\Mod_k)$.
For the convenience of the reader, we briefly recall the Toën and Vaquié's construction (see \cite[\S II.2.4]{DPS_Stable_pairs} for a more extended survey):

\begin{recollection}[Toën and Vaquié's moduli of objects]\label{recollection_Toen_Vaquie}
	Let $\cC\in \PrLomega_k$.
	Its \emph{moduli of objects} is the derived stack
	\[ \cM_{\cC}\colon \dAff_k\op \to  \cS \]
	given by the rule
	\[ \cM_{\cC}(\Spec(A)) \coloneqq \Fun^{\st}_k((\cC^{\omega})\op, \Perf(A))^\simeq \ , \]
	where $\Fun^{\st}_k$ denotes the $\infty$-category of exact, $k$-linear functors.
	When $\cC$ is a compact object of $\PrLomega_k$,
	\cite[Theorem 3.6]{Toen_Moduli} states that $\cM_{\cC}$ is a locally geometric derived stack locally of finite presentation.
	Moreover, Corollary 3.17 in \emph{loc.\ cit.} states that if $x \colon \Spec(A) \to \cM_\cC$ classifies $F \colon (\cC^\omega)\op \to \Perf(A)$, then the pullback of the tangent complex of $\cM_\cC$ at $x$ is given by the formula
	\[ x^\ast \mathbb T_{\cM_\cC} \simeq \Hom_{\Fun^{\st}_k((\cC^\omega)\op,\Perf(A))}(F,F)[1] \ . \]
\end{recollection}


\begin{lemma}\label{exodromy_compact_objects}
	Let $(X,P)$ be an exodromic stratified space with locally weakly contractible strata.
	Let $A \in \dCRing_k$. Then:
	\begin{enumerate}\itemsep=0.2cm
		\item The exodromy equivalence of \cref{thm:exodromy_with_coefficients} restricts to an equivalence
		\[ \Fun(\Piinfty(X,P), \Perf_A) \simeq \Conshyp_{P,\omega}(X;\Mod_A) \ . \]
		
		\item The derived prestack $\bfConsPhyp(X)$ satisfies flat hyperdescent and it is thus a derived stack.
	\end{enumerate}
\end{lemma}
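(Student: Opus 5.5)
For (1), I will use the exodromy equivalence of \cref{thm:exodromy_with_coefficients} with $\cE = \Mod_A$. Since $\Mod_A$ is compactly generated, it is compactly assembled, so the comparison functor $\Fun(\Piinfty(X,P),\Mod_A) \to \ConsPhyp(X;\Mod_A)$ is an equivalence. The task is to identify the essential image of $\Fun(\Piinfty(X,P),\Perf_A)$. By \cref{obs:objects_of_Exit_with_locally_weakly_contractible_strata}, every object of $\Piinfty(X,P)$ has the form $[x]$ for some point $x \in X$. Under exodromy, evaluation at $[x]$ corresponds to the stalk functor $x^{\ast,\hyp}$. To justify this, I would apply \eqref{exodromic_morphism_square} of \cref{thm:exodromy_functoriality} to the stratified map $x \colon (\ast,\ast) \to (X,P)$: the left vertical map becomes the stalk and the right vertical map becomes evaluation at $\Piinfty(x)(\ast) = [x]$. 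Hence a functor $\Piinfty(X,P) \to \Mod_A$ takes values in $\Perf_A$ if and only if the corresponding hypersheaf has all stalks in $\Perf_A$, which is exactly the defining condition of $\Conshyp_{P,\omega}(X;\Mod_A)$. I also need this identification to be compatible with base change $B \otimes_A (-)$, so that it becomes an equivalence of prestacks. This follows from the naturality of the exodromy comparison in the coefficient category, together with \cref{hyp_coeff_change_inverse_image}, which says that coefficient change commutes with stalks.

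For (2), part (1) gives an equivalence of prestacks
\[ \bfConsPhyp(X) \simeq \Map_{\Cat_\infty}\big(\Piinfty(X,P), \mathbf{Perf}(-)^{\simeq}\big) \ , \]
where the target sends $A$ to $\Fun(\Piinfty(X,P),\Perf_A)^\simeq$. This functor of $A$ is the limit of $\Perf_A^{\simeq}$ indexed over $\Piinfty(X,P)$. More precisely, $\Fun(\cK,\Perf_{(-)})$ is the cotensor of the prestack of categories $A \mapsto \Perf_A$ by $\cK$, and taking maximal subgroupoids commutes with limits. The prestack $A \mapsto \Perf_A$ satisfies flat hyperdescent; this is the standard fpqc hyperdescent for perfect modules, \cite[Corollary D.6.3.3]{Lurie_SAG} or the corresponding statement in \cite{Toen_Moduli}. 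Since limits of hypersheaves are hypersheaves and $\Fun(\cK,-)$ preserves limits in the coefficient variable, the descent property transfers to $\bfConsPhyp(X)$.

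The only delicate point is checking that the equivalence in (1) is natural in $A$. I expect this to go through because the exodromy comparison functor of \cref{thm:exodromy_with_coefficients} is induced by tensoring a fixed functor with $\cE$ in $\PrL$, and is therefore natural in left adjoints $\cE \to \cD$.
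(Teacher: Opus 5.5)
Your proposal is correct and follows essentially the same route as the paper: for (1), use the functoriality square of \cref{thm:exodromy_functoriality} to identify evaluation at $[x]$ with the stalk at $x$, and use \cref{obs:objects_of_Exit_with_locally_weakly_contractible_strata} to know that every object of $\Piinfty(X,P)$ is of the form $[x]$. For (2), deduce descent from (1) together with flat hyperdescent of $A \mapsto \Perf_A$. Your remarks on naturality in $A$ and on the cotensor/limit argument only make explicit what the paper's short proof leaves implicit.
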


\begin{proof}
	The second statement follows from the first and the fact that the assignment $A \mapsto \Perf_A$ satisfies flat hyperdescent \cite[Corollary D.6.3.3 \& Proposition 2.8.4.2-(10)]{Lurie_SAG}.
	The first statement follows from the fact that, in the exodromy equivalence
	\begin{equation*}
		\Fun(\Piinfty(X,P), \Mod_A) \simeq \ConsPhyp(X;\Mod_A) \ ,
	\end{equation*}
	evaluation at an object $[x] \in \Piinfty(X,P)$ corresponds to taking the stalk at a point $x \in X$ representing $[x]$, see \cref{thm:exodromy_functoriality} and \cref{obs:objects_of_Exit_with_locally_weakly_contractible_strata}.
\end{proof}

\begin{lem}\label{toen_vaquié_comparaison}
    Let $ (X,P) $ be an exodromic stratified space.
    Set $\cC \coloneqq \ConsPhyp(X;\Mod_k)$. 
	Then for every $A \in \dCRing_k$, there is a canonical equivalence
	\begin{equation}\label{eq_toen_vaquié_comparaison}
		\Fun_k^{\mathrm{st}}\big( (\cC^\omega)\op, \Mod_A \big)   \simeq \Conshyp_{P}(X;\Mod_A)  \ .
	\end{equation}
\end{lem}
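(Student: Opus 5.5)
The plan is to pass through exodromy on both sides and identify each with a functor category out of $\Piinfty(X,P)$. First, \cref{thm:exodromy_with_coefficients} with $\cE = \Mod_k$, which is compactly generated and hence compactly assembled, gives
\[ \cC = \ConsPhyp(X;\Mod_k) \simeq \Fun(\Piinfty(X,P),\Mod_k) \simeq \Fun(\Piinfty(X,P)\op,\Spc)\otimes \Mod_k \simeq \PSh(\Piinfty(X,P)\op)\otimes\Mod_k \ . \]
Consequently $\cC$ is compactly generated. By \cref{compact_generators_Cons}, or directly via Yoneda, a set of compact generators is given by the objects $[x]\otimes k$, which correspond to the corepresentables $\Map(\,[x],-)\otimes k$ on $\Piinfty(X,P)$.

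Next, I would identify $\cC^\omega$. Let $\Perf_k[\Piinfty(X,P)\op]$ denote the idempotent-complete stable $k$-linear closure of the image of the $k$-linearized Yoneda embedding $\Piinfty(X,P)\op \to \Fun(\Piinfty(X,P),\Mod_k)$, $c\mapsto k[\Map(c,-)]$. Then $\cC^\omega$ is this closure. The universal property of $k$-linear stable idempotent-complete envelopes then gives
\[ \Fun^{\st}_k((\cC^\omega)\op,\Mod_A) \simeq \Fun(\Piinfty(X,P),\Mod_A) \ , \]
where $A$-linearity on the target is absorbed because $\Mod_A$ is a $k$-linear stable idempotent-complete (in fact presentable) category and exact functors preserve finite colimits and retracts. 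Concretely, an exact $k$-linear functor out of $(\cC^\omega)\op$ is determined by its restriction along $\Piinfty(X,P)\to(\cC^\omega)\op$. Equivalently, I could use $\Fun^{\st}_k((\cC^\omega)\op,\Mod_A)\simeq \Fun^L_k(\Ind(\cC^\omega)\op\text{-}\ldots)$. The cleaner route is: exact functors $(\cC^\omega)\op\to\Mod_A$ are the same as colimit-preserving $k$-linear functors $\cC^{\mathrm{op}}$-side via $\Ind$, namely $\Fun^{\st}_k((\cC^\omega)\op,\Mod_A)\simeq \Fun^L_k(\Ind((\cC^\omega)\op),\Mod_A)$. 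Since $\Ind((\cC^\omega)\op)\simeq \Fun(\Piinfty(X,P)\op,\Mod_k)$, this category equals $\Fun(\Piinfty(X,P),\Mod_k)^{\vee}$, and the tensor-product description gives $\Fun^L(\PSh(\Piinfty(X,P)),\Mod_A)\simeq\Fun(\Piinfty(X,P),\Mod_A)$. Finally, I apply \cref{thm:exodromy_with_coefficients} again with $\cE=\Mod_A$ to obtain $\Fun(\Piinfty(X,P),\Mod_A)\simeq\ConsPhyp(X;\Mod_A)$.

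The main obstacle is the careful bookkeeping of variance and linearity. One must check that the opposite in $(\cC^\omega)\op$ exactly cancels the opposite in the definition $\Piinfty(X,P)=(\ConsPhyp(X)^{\at})\op$, since the atomic objects $[x]$ are covariant in $\cC$ and become contravariant in the functor category. One must also verify that the composite equivalence is canonical, i.e.\ natural in $A$ and compatible with extension of scalars $B\otimes_A(-)$. For the latter I would note that both exodromy equivalences are compatible with $L^{\hyp}$ by \cref{thm:exodromy_functoriality} and \cref{hyp_coeff_change_inverse_image}, and that restriction along the Yoneda embedding commutes with postcomposition by $B\otimes_A(-)$.
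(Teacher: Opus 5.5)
Your proof is correct, but it is organized differently from the paper's.

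\textbf{The paper's route.} The paper never unpacks $\cC$ in terms of $\Piinfty(X,P)$. It writes
$\ConsPhyp(X;\Mod_A)\simeq\ConsPhyp(X)\otimes\Mod_A\simeq\cC\otimes_{\Mod_k}\Mod_A$, where exodromy with coefficients is used implicitly to pass between $\ConsPhyp(X;-)$ and $\ConsPhyp(X)\otimes(-)$. It then applies the general formula $\cC\otimes_{\Mod_k}\Mod_A\simeq\Fun^{\mathrm R}_k(\cC\op,\Mod_A)$ and restricts along $\cC^\omega\subset\cC$ using compact generation.

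\textbf{Your route.} You compute both sides explicitly as $\Fun(\Piinfty(X,P),\Mod_A)$. The left side goes via $\Ind((\cC^\omega)\op)\simeq\cC^\vee\simeq\Fun(\Piinfty(X,P)\op,\Mod_k)$ and freeness of $\PSh(\Piinfty(X,P))\otimes\Mod_k$. The right side goes via exodromy with $\Mod_A$-coefficients. The two middle steps are the same identity seen from two sides, namely $\cC\otimes_{\Mod_k}\cD\simeq\Fun^{\mathrm L}_k(\cC^\vee,\cD)$ for dualizable $\cC$. Your variance bookkeeping checks out: $\cC^\omega$ is the envelope of $\Piinfty(X,P)\op$, so $(\cC^\omega)\op$ is the envelope of $\Piinfty(X,P)$.

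\textbf{What each buys.} The paper's route is more economical, and naturality in $A$ is automatic, since everything is functorial in $\Mod_A$ as a $\Mod_k$-module. In your route, compatibility with base change instead rests on naturality in the coefficient category of the comparison functor of \cref{thm:exodromy_with_coefficients}. It does not rest on \cref{thm:exodromy_functoriality}, which concerns functoriality in the stratified space. Your route gives the explicit intermediate description $\Fun(\Piinfty(X,P),\Mod_A)$. That makes the subsequent step, checking perfectness on generators, transparent.

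\textbf{Two small fixes.}
\begin{itemize}
\item Describing the compact generators as $[x]\otimes k$ uses locally weakly contractible strata, which this lemma does not assume. Use your ``directly via Yoneda'' alternative instead: the corepresentables $k[\Map(c,-)]$ for $c\in\Piinfty(X,P)$.
\item Delete the unfinished sentence involving $\Fun^{\mathrm L}_k(\Ind(\cC^\omega)\op\ldots)$ in favour of your ``cleaner route''.
\end{itemize}
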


\begin{proof}
	Note that $\cC$ is a compactly generated presentable stable $k$-linear $\infty$-category in virtue of \cref{compact_generators_Cons}.
	We have:
	\begin{align*}
		\ConsPhyp(X;\Mod_A) & \simeq \ConsPhyp(X) \otimes \Mod_A \\
		& \simeq (\ConsPhyp(X) \otimes \Mod_k) \otimes_{\Mod_k} \Mod_A \\
		& \simeq \cC \otimes_{\Mod_k} \Mod_A \\
		& \simeq \Fun_k^{\mathrm{R}}(\cC\op, \Mod_A) \\
		& \simeq \Fun_k^{\st}((\cC^\omega)\op, \Mod_A) \ ,
	\end{align*}
    and the conclusion follows.
\end{proof}

\begin{lem}\label{toen_vaquié_comparaison_compact}
	In the setting of \cref{toen_vaquié_comparaison}, assume furthermore that $(X,P)$ has locally weakly contractible strata.
	Then the equivalence \eqref{eq_toen_vaquié_comparaison} restricts to an equivalence
	\[ \Fun_k^{\mathrm{st}}\big( (\cC^\omega)\op, \Perf(A)\big)\simeq \Conshyp_{P,\omega}(X;\Mod_A) \ . \]
	In particular, there is a canonical equivalence of derived stacks
	\[ \cM_{\cC} \simeq \bfConsPhyp(X) \ . \]
\end{lem}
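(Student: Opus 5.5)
The equivalence \eqref{eq_toen_vaquié_comparaison} is built by composing standard identifications, the last step being restriction along the Yoneda inclusion $\cC^\omega \subset \cC$. Tracing it, a functor $\Phi \colon (\cC^\omega)\op \to \Mod_A$ corresponds to a hypersheaf $F \in \ConsPhyp(X;\Mod_A)$ characterized by
\[ \Phi(c) \simeq \Map^{\Mod_A}(c_A, F) , \qquad c_A \coloneqq A \otimes_k^{\hyp} c . \]
Here $\Map^{\Mod_A}$ is the $A$-linear mapping object. My plan is therefore to show that $\Phi$ lands in $\Perf_A$ if and only if the stalks of $F$ are perfect.

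\emph{Step 1: generators and stalks.} By \cref{compact_generators_Cons}, applied with $\cE = \Mod_k$ and the generator $k$, the objects $[x]_k \coloneqq x^{\cons}_\sharp(k)$ for $x \in X$ form a set of compact generators of $\cC$. Hence $\cC^\omega$ is the thick (stable, retract-closed) subcategory generated by them. Since $\Perf_A \subset \Mod_A$ is thick and $\Phi$ is exact, $\Phi$ takes values in $\Perf_A$ if and only if $\Phi([x]_k) \in \Perf_A$ for every $x \in X$.

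\emph{Step 2: identify $\Phi([x]_k)$ with the stalk $x^{\ast}F$.} Under exodromy (\cref{thm:exodromy_with_coefficients}, \cref{thm:exodromy_functoriality}), $[x]_k$ corresponds to the left Kan extension of $k$ from the object $[x] \in \Piinfty(X,P)$. By \cref{obs:objects_of_Exit_with_locally_weakly_contractible_strata}, it corepresents the stalk functor $x^\ast$. Base change gives $A \otimes_k^{\hyp}[x]_k \simeq x^{\cons}_\sharp(A)$, because $x^{\cons}_\sharp$ commutes with the coefficient change: both sides are left Kan extensions along $\Piinfty$-functors, by \eqref{exodromic_morphism_square}. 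This object corepresents $x^\ast \colon \ConsPhyp(X;\Mod_A)\to \Mod_A$ $A$-linearly. Therefore $\Phi([x]_k) \simeq x^\ast F$, and Step 1 becomes exactly the condition that all stalks of $F$ lie in $\Perf_A$. This gives the first claim.

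\emph{Step 3: the stack statement.} Passing to maximal subgroupoids yields $\cM_\cC(\Spec A) \simeq \bfConsPhyp(X)(\Spec A)$. It remains to check naturality in $A$. Along $A \to B$, the Toën--Vaquié side composes with $B\otimes_A -$, and the sheaf side uses $B\otimes_A^{\hyp}$ from \cref{projection_formula}. These agree because every identification in the chain of \cref{toen_vaquié_comparaison} is functorial for $\Mod_A \mapsto \Mod_B$ in $\PrL$: the equivalence $\cC\otimes_{\Mod_k}\Mod_A \simeq \Fun^{\st}_k((\cC^\omega)\op,\Mod_A)$ is natural in the coefficient category.

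The main obstacle is Step 2: making precise that the abstract equivalence sends $F$ to $c \mapsto \Map(c_A,F)$, and that $[x]_k$ base-changes to the $A$-linear corepresenting object of the stalk. I would handle it by rephrasing everything through $\Fun(\Piinfty(X,P),-)$ via \cref{thm:exodromy_with_coefficients}. There, $\cC \simeq \Fun(\Piinfty(X,P),\Mod_k)$ and the compact generators are corepresentables $k[\Map_{\Piinfty}([x],-)]$. Evaluation at $[x]$ is then the Yoneda lemma, compatible with $\otimes_k \Mod_A$, and \cref{exodromy_compact_objects}(1) identifies pointwise-perfect functors with $\Conshyp_{P,\omega}(X;\Mod_A)$.
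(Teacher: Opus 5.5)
Your proposal is correct and follows the same route as the paper: you reduce the perfectness condition to the compact generators $x^{\cons}_\sharp(k)$ supplied by \cref{compact_generators_Cons}, and then identify the value of the corresponding functor on each generator with the stalk $x^{\ast,\hyp}(F)$, via $A\otimes_k^{\hyp} x^{\cons}_\sharp(k)\simeq x^{\cons}_\sharp(A)$ and adjunction. Your Step 3, which checks that the identification is compatible with base change along $A\to B$, makes explicit a naturality point that the paper leaves implicit.
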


\begin{proof}
	Under the equivalence \eqref{eq_toen_vaquié_comparaison}, the hypersheaf $F\in \Conshyp_{P}(X;\Mod_A)$ corresponds to the assignment
	\[ G \longmapsto \Hom_{ \Conshyp_{P}(X;\Mod_A)}(A\otimes_k^{\hyp} G,F) \ . \]
	Note also that for a functor in $\Fun_k^{\mathrm{st}}\big( (\cC^\omega)\op, \Mod_A \big)$, the condition to land in $\Perf(A)$ can be checked on a subset of objects generating $\cC^\omega$ under finite colimits and retracts.
	Hence, the condition to land in $\Perf(A)$ can be checked on a set of compact generators of $\cC$.
	Since $(X,P)$ has locally weakly contractible strata, \cref{compact_generators_Cons} implies that  $\cC$ is compactly generated by the $x_{\sharp}^{\cons}(k)$ for $x\in X$.
	On the other hand, we have 
   	\begin{align*}
		\Hom_{ \Conshyp_{P}(X;\Mod_A)}(A\otimes_k^{\hyp} x_{\sharp}^{\cons}(k),F)& \simeq  \Hom_{ \Conshyp_{P}(X;\Mod_A)}(x_{\sharp}^{\cons}(A),F)   \\ 
                       &   \simeq \Hom_{ \Mod_A}(A,x^{\ast,\hyp}(F)) \\
                       & \simeq x^{\ast,\hyp}(F)\ .
    \end{align*}
	The conclusion thus follows.  
\end{proof}

In order to invoke Toën and Vaquié's representability result, one needs a compactness assumptions on \smash{$\ConsPhyp(X;\Mod_k)$}.
This are provided by \Cref{thm:subanalytic_stratified_spaces_are_conically_refineable,thm:algebraic_stratified_spaces_are_conically_refineable_and_categorically_finite} via:

\begin{lem}\label{compactness_criterion_for_PSh}
	Let $\cK$ be the collection of finite simplicial sets with $\Idem$ added  and let
	\begin{equation*}
		\PSh_{\cK} \colon \Cat_{\infty}\to \Cat_\infty^{\mathrm{rex},\mathrm{idem}}
	\end{equation*}
	be the left adjoint of the forgetful functor $\Cat_\infty^{\mathrm{rex},\mathrm{idem}}\to \Cat_{\infty}$.
	(See \cite[Proposition 5.3.6.2]{HTT} for why this adjoint exists.)
	Then, for every $\cC \in \Cat_{\infty}$, the following hold:
	\begin{enumerate}\itemsep=0.2cm
		\item We have $\PSh_{\cK}(\cC)=\PSh(\cC)^{\omega}$.

		\item If $\cC$ is compact, then $\PSh(\cC)$ is a compact object in $\PrLomega$.

		\item If $\cC$ is compact, the tensor product
		\[ \PSh(\cC)_k \coloneqq\PSh(\cC) \otimes \Mod_k \]
		is compact in $\PrLomega_k$.
	\end{enumerate}
\end{lem}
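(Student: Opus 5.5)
For item (1), I would use the standard description of $\PSh(\cC)^\omega$ from \cite[Proposition 5.3.6.2]{HTT}: for any small $\infty$-category $\cC$ and any collection $\cK$ of simplicial sets, $\PSh_{\cK}(\cC)$ is the smallest full subcategory of $\PSh(\cC)$ that contains the representables and is closed under $\cK$-indexed colimits. With $\cK$ the finite simplicial sets together with $\Idem$, this is the closure of the Yoneda image under finite colimits and retracts. By \cite[Proposition 5.3.4.17]{HTT} (or the argument in the proof of \cite[Proposition 5.3.5.12]{HTT}), that closure is exactly $\PSh(\cC)^\omega$: representables are compact, compact objects are stable under finite colimits and retracts, and conversely every compact object is a retract of a finite colimit of representables. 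This identifies $\PSh_{\cK}(\cC) \simeq \PSh(\cC)^\omega$.

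For item (2), I would use the equivalence $\Ind \colon \Cat_\infty^{\mathrm{rex},\mathrm{idem}} \simeq \PrLomega$, with inverse $\cD \mapsto \cD^\omega$. Under this equivalence, compactness in $\PrLomega$ is the same as compactness in $\Cat_\infty^{\mathrm{rex},\mathrm{idem}}$. By (1), $\PSh(\cC) \simeq \Ind(\PSh_\cK(\cC))$, so it suffices to show that $\PSh_{\cK}(\cC)$ is compact in $\Cat_\infty^{\mathrm{rex},\mathrm{idem}}$. This follows formally because $\PSh_{\cK}$ is left adjoint to a forgetful functor that preserves filtered colimits. Filtered colimits in $\Cat_\infty^{\mathrm{rex},\mathrm{idem}}$ are computed underlying: finite colimits and idempotents are finite data, so a filtered colimit of categories with finite colimits and idempotent splittings, along exact functors, has them again. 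I expect verifying this commutation to be the one genuinely non-formal step, and I would cite \cite[\S5.5.7]{HTT} or Lurie's treatment of $\Cat_\infty(\cK)$ for it. Granting it, $\Map(\PSh_\cK(\cC), \colim_i \cD_i) \simeq \Map_{\Cat_\infty}(\cC, \colim_i \cD_i) \simeq \colim_i \Map(\cC,\cD_i)$, using compactness of $\cC$ in $\Cat_\infty$.

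For item (3), I would observe that $-\otimes \Mod_k \colon \PrLomega \to \PrLomega_k$ is left adjoint to the forgetful functor. Under the $\Ind$ equivalence this forgetful functor corresponds to forgetting $k$-linear structure on small idempotent-complete stable categories, and it preserves filtered colimits, again because they are computed underlying. A left adjoint whose right adjoint preserves filtered colimits sends compact objects to compact objects, so (2) gives the claim. If the preservation of filtered colimits by the forgetful functor is not taken as known, I would argue it the same way as in (2): filtered colimits of compactly generated $k$-linear categories along compact-preserving functors are computed on compact objects, and the $k$-linear structure passes to the colimit.
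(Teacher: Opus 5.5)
Your proposal is correct and follows the paper's proof essentially step for step. Item (1) comes from describing $\PSh_{\cK}(\cC)$ as the closure of the representables under $\cK$-indexed colimits, together with HTT, Proposition 5.3.4.17. Item (2) comes from compactness of $\PSh_{\cK}(\cC)$ in $\Cat_\infty^{\mathrm{rex},\mathrm{idem}}$, which holds because the forgetful functor to $\Cat_\infty$ preserves filtered colimits (the paper justifies this by HTT 5.5.7.11 and Corollary 4.4.5.21), transported along the $\Ind$ equivalence with $\PrLomega$. Item (3) comes from $-\otimes\Mod_k$ being left adjoint to a filtered-colimit-preserving forgetful functor.

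The one thing to drop is your heuristic that idempotents are ``finite data''. A coherent idempotent in an $\infty$-category is indexed by the infinite simplicial set $\Idem$, so that step genuinely rests on the cited results rather than on the slogan, as you already anticipated.
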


\begin{proof}
	By construction, $\PSh_{\cK}(\cC)$ is the smallest full subcategory of $\PSh(\cC)$ containing the image of the Yoneda embedding and closed under $\cK$-indexed colimits. In particular, $\PSh_{\cK}(\cC) \subseteq \PSh(\cC)^{\omega}$ and the equality holds thanks to \cite[Proposition 5.3.4.17]{HTT}; this proves (1).
	
	For (2), assume that $\cC$ is compact in $\Cat_{\infty}$.
	By \cite[Proposition 5.5.7.11 \& Corollary 4.4.5.21]{HTT}, the right adjoint $\Cat_\infty^{\mathrm{rex},\mathrm{idem}}\to \Cat_\infty$ commutes with filtered colimits.
	Thus, $\PSh_{\cK}(\cC)$ is  a compact object of $\Cat_\infty^{\mathrm{rex},\mathrm{idem}}$.
	Since $\Ind$ induces an equivalence between $\Cat_\infty^{\mathrm{rex},\mathrm{idem}}$ and $\PrLomega$ by  \cite[Lemma 5.3.2.9]{Lurie_Higher_algebra}, we deduce that $\Ind(\PSh_{\cK}(\cC))$ is compact in $\PrLomega$.
	By (1) combined with the fact that $\PSh(\cC)$ is compactly generated in virtue of \cite[Proposition 5.3.5.12]{HTT}, we have 
	\[ \Ind(\PSh_{\cK}(\cC)) \simeq \Ind(\PSh(\cC)^{\omega}) \simeq \PSh(\cC) \]
	and (2) is proved.
	
	Item (3) follows from (2) and the fact that the forgetful functor $\PrLomega_k \to \PrLomega$ commutes with limits and colimits.
	Hence, its left adjoint preserves compact objects.
\end{proof}

\begin{cor}\label{ConsP_is_compact}
    Let $(X,P)$ be a categorically compact exodromic stratified space.
	Then, \smash{$\ConsPhyp(X;\Mod_k)$} is compact in $\PrLomega_k$.
\end{cor}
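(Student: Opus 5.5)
The strategy is to identify $\ConsPhyp(X;\Mod_k)$ with a presheaf category tensored with $\Mod_k$, and then apply \cref{compactness_criterion_for_PSh}-(3). Since $(X,P)$ is exodromic, \cref{def:exodromic_stratified_space} tells us that $\ConsPhyp(X)$ is atomically generated with $\ConsPhyp(X)^{\at} = \Piinfty(X,P)\op$. This gives an equivalence
\[ \ConsPhyp(X) \simeq \PSh(\Piinfty(X,P)\op) \simeq \Fun(\Piinfty(X,P),\Spc) \ . \]

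Next we tensor with $\Mod_k$ in $\PrL$. Since $\ConsPhyp(X;\Mod_k)$ is by definition $\ConsPhyp(X)\otimes \Mod_k$, we obtain
\[ \ConsPhyp(X;\Mod_k) \simeq \PSh(\Piinfty(X,P)\op)\otimes \Mod_k = \PSh(\Piinfty(X,P)\op)_k \ . \]
Alternatively, one can invoke \cref{thm:exodromy_with_coefficients} directly, since $\Mod_k$ is compactly generated and hence compactly assembled. We should check that this is an equivalence in $\PrLomega_k$ and not merely in $\PrL$. This holds because the equivalence is an equivalence of $\Mod_k$-modules, and compact generation is intrinsic to the underlying category.

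It remains to verify the hypothesis of \cref{compactness_criterion_for_PSh}-(3) for $\cC = \Piinfty(X,P)\op$. By \cref{categorically_compact}, categorical compactness says precisely that $\Piinfty(X,P)$ is compact in $\Cat_\infty$. The functor $(-)\op$ is an autoequivalence of $\Cat_\infty$, so it preserves compact objects, and therefore $\Piinfty(X,P)\op$ is compact as well. Item (3) of the lemma then yields that $\PSh(\Piinfty(X,P)\op)_k$ is compact in $\PrLomega_k$, and hence so is $\ConsPhyp(X;\Mod_k)$.

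The only point requiring any care is the identification of the $\Mod_k$-linear tensor product with the exodromy description in the compactly generated setting. This is essentially the bookkeeping already carried out in the proof of \cref{toen_vaquié_comparaison}, so I expect no real obstacle.
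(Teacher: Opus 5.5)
Your proposal is correct and is essentially the paper's proof: identify $\ConsPhyp(X;\Mod_k)$ with $\PSh(\Piinfty(X,P)\op)\otimes\Mod_k$ via exodromy, then apply \cref{compactness_criterion_for_PSh}-(3) to the compact $\infty$-category $\Piinfty(X,P)\op$. One small correction: $\ConsPhyp(X;\Mod_k)$ is defined as a full subcategory of $\HSh(X)\otimes\Mod_k$, not as $\ConsPhyp(X)\otimes\Mod_k$, so the identification is not true by definition and must go through \cref{thm:exodromy_with_coefficients} (the route you list as an ``alternative'').
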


\begin{proof}
	The exodromy equivalence yields 
	\[ \ConsPhyp(X;\Mod_k) \simeq \Fun\big(\Pi_\infty(X,P), \Mod_k) \simeq  \PSh\big(\Pi_\infty(X,P )\op\big) \otimes \Mod_k \ . \]
	Thus the claim follows from \cref{compactness_criterion_for_PSh}.
\end{proof}

Summing up the results discussed so far, we obtain:

\begin{prop} \label{prop:categorically_compact_implies_representable}
	Let $(X,P)$ be a categorically compact exodromic stratified space with locally weakly contractible strata.
	Then:
	\begin{enumerate}\itemsep=0.2cm
		\item The derived stack $\bfConsPhyp(X)$ is locally geometric and locally of finite presentation.

		\item The tangent complex at a point $x \colon \Spec(A) \to \bfConsPhyp(X)$ classifying a constructible sheaf $F \in \Conshyp_{P,\omega}(X;\Mod_A)$ is given by
		\[ x^\ast \bbT_{\bfConsPhyp(X)} \simeq \Hom_{\ConsPhyp(X;\Mod_A)}(F, F)[1] \ , \]
		where the right hand side denotes the $\Mod_A$-enriched $\Hom$ of $\bfConsPhyp(X;\Mod_A)$.
	\end{enumerate}	
\end{prop}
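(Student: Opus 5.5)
The plan is to assemble the preceding lemmas with Toën--Vaquié's representability theorem. Set $\cC \coloneqq \ConsPhyp(X;\Mod_k)$. By \cref{compact_generators_Cons}, $\cC$ is a compactly generated $k$-linear presentable stable $\infty$-category, so it lies in $\PrLomega_k$. Since $(X,P)$ is categorically compact, \cref{ConsP_is_compact} shows that $\cC$ is a compact object of $\PrLomega_k$. That corollary rests on the exodromy equivalence $\cC \simeq \PSh(\Pi_\infty(X,P)\op)\otimes\Mod_k$ and on \cref{compactness_criterion_for_PSh}. Note that $\Pi_\infty(X,P)\op$ is compact in $\Cat_\infty$ exactly when $\Pi_\infty(X,P)$ is, because $(-)\op$ is an automorphism of $\Cat_\infty$.

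With compactness established, \cite[Theorem 3.6]{Toen_Moduli} (recalled in \cref{recollection_Toen_Vaquie}) shows that $\cM_\cC$ is locally geometric and locally of finite presentation. Since $(X,P)$ has locally weakly contractible strata, \cref{toen_vaquié_comparaison_compact} gives an equivalence $\cM_\cC \simeq \bfConsPhyp(X)$, and these properties transfer to $\bfConsPhyp(X)$. This proves (1).

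For (2), let $x\colon \Spec(A)\to\bfConsPhyp(X)$ classify $F$, and let $\widetilde F\colon(\cC^\omega)\op\to\Perf(A)$ be the corresponding functor. Corollary 3.17 of \cite{Toen_Moduli} gives
\[ x^\ast\bbT_{\bfConsPhyp(X)}\simeq \Hom_{\Fun^{\st}_k((\cC^\omega)\op,\Perf(A))}(\widetilde F,\widetilde F)[1] \ . \]
The remaining task is to identify this mapping object with the $\Mod_A$-enriched $\Hom_{\ConsPhyp(X;\Mod_A)}(F,F)$. The inclusion $\Fun^{\st}_k((\cC^\omega)\op,\Perf(A))\subset\Fun^{\st}_k((\cC^\omega)\op,\Mod_A)$ is fully faithful and $A$-linear. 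The chain of equivalences in \cref{toen_vaquié_comparaison}, namely $\cC\otimes_{\Mod_k}\Mod_A\simeq\Fun^{\mathrm R}_k(\cC\op,\Mod_A)\simeq\Fun^{\st}_k((\cC^\omega)\op,\Mod_A)$, consists of $\Mod_A$-linear equivalences, so it preserves enriched Homs. Combining these gives the formula.

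The only point requiring care, and the one I expect to be the main obstacle, is checking that the equivalence of \cref{toen_vaquié_comparaison} is $\Mod_A$-linear rather than merely an equivalence of underlying $\infty$-categories. This is what makes the enriched Homs match. I would argue it by noting that each step in the chain is an equivalence in $\Mod_{\Mod_A}(\PrL)$: base change of $\cC$ along $\Mod_k\to\Mod_A$, and the standard identification of $\cC\otimes_{\Mod_k}\cD$ with $\Fun^{\mathrm R}_k(\cC\op,\cD)$ for dualizable (here compactly generated) $\cC$.
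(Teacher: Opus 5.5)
Your proposal is correct and is essentially the paper's proof. Both identify $\bfConsPhyp(X)$ with the moduli of objects $\cM_\cC$ for $\cC = \ConsPhyp(X;\Mod_k)$ (via the comparison lemma, which uses locally weakly contractible strata), get compactness of $\cC$ in $\PrLomega_k$ from \cref{ConsP_is_compact}, and then apply Toën--Vaquié's Theorem 3.6 and Corollary 3.17. Your extra step is not in the paper's proof: you check that the equivalence $\ConsPhyp(X;\Mod_A)\simeq \cC\otimes_{\Mod_k}\Mod_A \simeq \Fun^{\mathrm{st}}_k((\cC^\omega)\op,\Mod_A)$ is $\Mod_A$-linear, so that Toën--Vaquié's Hom agrees with the enriched Hom of sheaves. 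The paper leaves this implicit, and your justification for it is sound.
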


\begin{proof}
	Set $\cC \coloneqq \ConsPhyp(X;\Mod_k)$.
	By \cref{toen_vaquié_comparaison_compact}, the stack $\bfConsPhyp(X)$ is canonically equivalent to the moduli of objects $\cM_{\cC}$.
    Since $(X,P)$ is categorically compact, the category $\cC$ is a compact object of $\PrLomega_k$ in virtue of \cref{ConsP_is_compact}.
    Then the conclusion follows from Toën and Vaquié's results, see \cref{recollection_Toen_Vaquie}.
\end{proof}


To get rid of the dependence of the specific stratification, we study how the moduli $\bfConsPhyp(X)$ behaves under coarsenings:

\begin{proposition}\label{prop:moduli_coarsening}
	Let $ (X,R) $ be a categorically compact exodromic stratified space with locally weakly contractible strata.
	Let $ \phi \colon R \to P  $ be a map of posets.
	Then the induced map of locally geometric derived stacks 
	\begin{equation*}
		 i \colon \bfConsPhyp(X) \hookrightarrow \bfConsRhyp(X)
	\end{equation*}
	is a representable open immersion.
\end{proposition}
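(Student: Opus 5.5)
First I reduce to a statement about exit-path categories. By \cref{thm:stability_coarsening} and \cref{prop:categorical_finiteness_and_compactness_are_stable_under_coarsening}, $(X,P)$ is exodromic and categorically compact. Its strata are disjoint unions of strata of $(X,R)$, so they are still locally weakly contractible. Hence \cref{prop:categorically_compact_implies_representable} applies to both stratifications. Via \cref{exodromy_compact_objects}, the map $i$ is identified, for each $A$, with restriction along the localization $\Pi_\infty(X,R) \to \Pi_\infty(X,R)[W_P^{-1}] \simeq \Pi_\infty(X,P)$:
\[ \Fun(\Pi_\infty(X,P),\Perf_A)^{\simeq} \to \Fun(\Pi_\infty(X,R),\Perf_A)^{\simeq} . \]
A localization is fully faithful on functor categories, so this map is a monomorphism of derived stacks. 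Its essential image consists of those $F$ that send every morphism in $W_P$ to an equivalence. Equivalently, $F$ lies in the image exactly when, for every $p\in P$, the restriction $i_{\phi^{-1}(p)}^{\ast,\hyp}F$ is locally hyperconstant on $X_p$.

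Next I reduce the condition to finitely many morphisms. Since $\Pi_\infty(X,R)$ is compact in $\Cat_\infty$, it is a retract of a finite $\infty$-category. I expect only finitely many equivalence classes of morphisms to matter: for each pair $r\leq r'$ in $R$ with $\phi(r)=\phi(r')$, it should suffice to invert one representative exit path between each pair of path components of the relevant strata. The cleanest route is to choose finitely many morphisms $w_1,\dots,w_n\in W_P$ whose localization already gives $\Pi_\infty(X,R)[W_P^{-1}]$. To find them, write $W_P$ as a filtered union of finite subsets $W_\alpha$. Then $\Pi_\infty(X,R)[W_P^{-1}]=\colim_\alpha \Pi_\infty(X,R)[W_\alpha^{-1}]$ is a filtered colimit in $\Cat_\infty$. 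Compactness of $\Pi_\infty(X,P)$ makes the identity factor through some stage $\alpha$, which means the localization is a retract of the finite-stage one. I then check that inverting $W_\alpha$ already inverts all of $W_P$: any $w\in W_P$ maps to an equivalence at stage $\alpha$, because the retraction is compatible with the maps from $\Pi_\infty(X,R)$. This step is the main obstacle, and it needs care with the retract argument.

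Finally I show openness. A point $\Spec(A)\to\bfConsRhyp(X)$, classifying $F$, lies in the image if and only if each of the finitely many maps $F(w_j)\colon M_j\to N_j$ of perfect $A$-modules is an equivalence. This is equivalent to the vanishing of $\mathrm{cofib}(F(w_j))$, which is a perfect complex. The locus where a perfect complex vanishes is the complement of its support. That support is closed, because by Nakayama it is detected on $\pi_0$ and the support of a perfect complex is closed. So the locus is a quasi-compact open subscheme of $\Spec(A)$, and it is compatible with base change. Taking the finite intersection over $j$, together with the monomorphism property above, the fiber product $\Spec(A)\times_{\bfConsRhyp(X)}\bfConsPhyp(X)$ is this open subscheme. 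Therefore $i$ is a representable open immersion.
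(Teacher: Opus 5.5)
Your route differs from the paper's. The paper obtains the monomorphism exactly as you do, from full faithfulness of restriction along $\Pi_\infty(X,R)\to\Pi_\infty(X,P)$. It never looks at individual exit paths: both stacks are locally geometric and locally of finite presentation by \cref{prop:categorically_compact_implies_representable}, so it suffices to show that $i$ is formally étale. The relative cotangent complex vanishes by full faithfulness together with the Toën--Vaquié formula $x^\ast\mathbb T\simeq\Hom(F,F)[1]$, and an étale monomorphism is an open immersion. You instead identify $\Spec(A)\times_{\bfConsRhyp(X)}\bfConsPhyp(X)$ explicitly as the locus where finitely many maps $F(w_j)$ are equivalences. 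This avoids the tangent complex formula and any a priori geometricity of $\bfConsPhyp(X)$, which then follows because it is an open substack. It also gives quasi-compact open immersions with an explicit open locus. The price is the finiteness reduction, which is indispensable, since an infinite intersection of opens need not be open.

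That reduction has a real hole as written. Let $\ell\colon\Pi_\infty(X,R)\to\Pi_\infty(X,R)[W_P^{-1}]$, $\ell_\alpha$, and $\pi_\alpha\colon\Pi_\infty(X,R)[W_\alpha^{-1}]\to\Pi_\infty(X,R)[W_P^{-1}]$ be the canonical functors. Compactness of $\Pi_\infty(X,P)$ only gives $s$ with $\pi_\alpha\circ s\simeq\id$. Nothing forces $s\circ\ell\simeq\ell_\alpha$, so you cannot yet conclude that $\ell_\alpha$ inverts $W_P$.

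The fix uses compactness of $\Pi_\infty(X,R)$, which you have. The functors $s\circ\ell$ and $\ell_\alpha$ become equivalent after composing with $\pi_\alpha$. Since $\Map(\Pi_\infty(X,R),-)$ commutes with filtered colimits, they are already equivalent at some stage $\beta\geq\alpha$. Then $\ell_\beta$ factors through $\ell$, hence inverts $W_P$, and the universal properties show that $\pi_\beta$ is an equivalence.

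There is a second way to get the finite set. Write $\Pi_\infty(X,R)$ as a retract, via $a$ and $b$ with $b\circ a\simeq\id$, of an $\infty$-category presented by a finite simplicial set $K$. Keep the finitely many $b(e)$, for $e$ an edge of $K$, that lie in $W_P$. Each $w\in W_P$ is homotopic to $b(a(w))$, a composite of images of edges, because morphisms of $hK$ are composites of edges. Monotonicity of $\phi$ forces every factor of such a composite to lie in $W_P$.

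Separately, your justification that $(X,P)$ has locally weakly contractible strata is invalid. A stratum of $(X,P)$ is a union of strata of $(X,R)$, but not a topological coproduct: for $R=\{0<1\}\to P=\ast$ it is all of $X$. You do not need this property. $P$-hyperconstructible hypersheaves form a full subcategory of $R$-hyperconstructible ones, with the same perfect-stalk condition. So by \cref{thm:exodromy_functoriality} you can describe $\bfConsPhyp(X)(A)$ directly as the $W_P$-inverting functors in $\Fun(\Pi_\infty(X,R),\Perf_A)$.
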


\begin{proof}
	From \cref{prop:categorical_finiteness_and_compactness_are_stable_under_coarsening}, we see that $ (X,P) $ is exodromic and categorically compact.
	Therefore, \cref{prop:categorically_compact_implies_representable} implies that both $\bfConsPhyp(X)$ and $\bfConsRhyp(X)$ are locally geometric and locally of finite presentation.
	In particular, the morphism $i$ automatically locally of finite presentation, and to prove that it is an open immersion suffices to prove that it is étale and that the diagonal map
	\begin{equation*}
		\Delta_i \colon \bfConsPhyp(X) \to \bfConsPhyp(X) \times_{\bfConsRhyp(X)} \bfConsPhyp(X) 
	\end{equation*}
	is an equivalence.
	\Cref{thm:stability_coarsening} shows that $\Piinfty(X,R) \to \Piinfty(X,P)$ exhibits $\Piinfty(X,P)$ as the localization of $\Piinfty(X,R)$ at the collection of morphisms $W_P$.
	It follows that for every derived $k$-algebra $A$, the functor
	\begin{equation}\label{eq:moduli_coarsening}
		\ConsRhyp(X;\Mod_A) \longrightarrow \ConsPhyp(X;\Mod_A)
	\end{equation}
	is fully faithful.
	In particular, $\bfConsPhyp(X)(\Spec(A)) \to \bfConsRhyp(X)(\Spec(A))$ is fully faithful; this immediately implies that $\Delta_i$ is an equivalence.
	\personal{Observe that this condition alone already implies that the map $ i $ is representable by (a priori non separated) algebraic spaces, because the fibers are $ 0 $-truncated and geometric.}
	
	We are left to check that $i$ is étale.
	Notice that $ i $ is automatically locally of finite presentation.
	Thus \cite[Corollary 2.2.5.6]{HAG-II} implies that it suffices to show that $ i $ is formally étale, i.e., that the cotangent complex of $ i $ vanishes.
	This follows at once from fully faithfulness of \eqref{eq:moduli_coarsening} and the formula for the tangent complex provided by \cref{prop:categorically_compact_implies_representable}-(2).
\end{proof}

We finally arrive at:

\begin{thm}\label{thm:varying_stratification}
	\hfill
	\begin{enumerate}\itemsep=0.2cm
		\item Let $X$ be a compact subanalytic space.
		Then there exists a derived stack $\bfCons(X)$ over $k$ parametrizing complexes of sheaves with perfect stalks that are constructible with respect to \emph{some} subanalytic stratification of $X$.
		
		\item Let $X$ be a $\mathbb R$-algebraic variety.
		Then there exists a derived stack $\bfCons(X)$ complexes of sheaves with perfect stalks that are constructible with respect to \emph{some} algebraic stratification of $X$.
	\end{enumerate}
	Furthermore, in both cases $\bfCons(X)$ is locally geometric and locally of finite presentation.
\end{thm}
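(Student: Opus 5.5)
We will define $\bfCons(X)$ as a filtered colimit of the stacks $\bfConsPhyp(X)$ over the admissible stratifications. Let $\mathcal S(X)$ be the collection of subanalytic stratifications $(X,P)$ with $P$ finite (case (1)), resp.\ finite algebraic stratifications (case (2)). Order $\mathcal S(X)$ by refinement: $(X,R)\to(X,P)$ whenever $R$ refines $P$, i.e.\ the stratification map $X\to P$ factors as $X\to R\xrightarrow{\phi}P$. Two such stratifications admit a common refinement, namely the intersections of strata, which are again subanalytic, resp.\ Zariski locally closed. Hence $\mathcal S(X)$ is cofiltered. We then set
\[ \bfCons(X) \coloneqq \colim_{(X,P)\in \mathcal S(X)\op} \bfConsPhyp(X) \ , \]
with transition maps the maps $\bfConsPhyp(X)\to\bfConsRhyp(X)$ induced by pullback along the identity $(X,R)\to(X,P)$, as in the remark following the definition of $\bfConsPhyp$. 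The colimit is computed in derived stacks, i.e.\ we sheafify the prestack colimit.

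The first step checks that each term is representable. By \cref{thm:subanalytic_stratified_spaces_are_conically_refineable}-(2),(4), resp.\ \cref{thm:algebraic_stratified_spaces_are_conically_refineable_and_categorically_finite}-(3),(4), every $(X,P)\in\mathcal S(X)$ is a categorically compact exodromic stratified space with locally weakly contractible strata. Compactness of $X$ is used here in the subanalytic case. Then \cref{prop:categorically_compact_implies_representable} shows that each $\bfConsPhyp(X)$ is locally geometric and locally of finite presentation. Next, \cref{prop:moduli_coarsening} shows that every transition map is a representable open immersion. So $\bfCons(X)$ is a filtered colimit of locally geometric stacks along open immersions.

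The second step identifies the $A$-points. Since open immersions are monomorphisms, the prestack colimit evaluated on $\Spec(A)$ is the union, inside $\HSh(X;\Mod_A)^\simeq$, of the full subgroupoids $\Conshyp_{P,\omega}(X;\Mod_A)^\simeq$. These are exactly the hypersheaves with perfect stalks that are constructible for some admissible stratification, which gives the moduli interpretation. After sheafification, an $A$-point is étale-locally on $\Spec(A)$ of this form. If needed, we can say more: by quasi-compactness of $\Spec(A)$, finitely many stratifications suffice, and a common refinement then gives a global one. This shows the colimit is already a stack on affines.

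The final step, and the main obstacle, is geometricity of the colimit. Our plan is to show directly that $\bfCons(X)$ is covered by the open substacks $\bfConsPhyp(X)$. We first check openness of each $\bfConsPhyp(X)\to\bfCons(X)$: after base change along any $\Spec(A)\to\bfCons(X)$, this point factors through some $\bfConsRhyp(X)$, and we may take $R$ to refine $P$ by passing to a common refinement. The base change is then the open immersion $\bfConsPhyp(X)\times_{\bfConsRhyp(X)}\Spec(A)$ of \cref{prop:moduli_coarsening}. Local geometricity and local finite presentation are local for the Zariski topology, so they pass from the members of an open cover to $\bfCons(X)$. The diagonal needs care here, since its fibers must be computed inside a single $\bfConsRhyp(X)$. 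This is possible by cofilteredness, and it reduces the diagonal to that of $\bfConsRhyp(X)$, which is already known to be well behaved.
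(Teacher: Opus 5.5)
Your proposal is correct and follows the same route as the paper: define $\bfCons(X)$ as the colimit of the $\bfConsPhyp(X)$ over the admissible stratifications, obtain that each term is locally geometric and locally of finite presentation from the finiteness theorems together with \cref{prop:categorically_compact_implies_representable}, and use \cref{prop:moduli_coarsening} to see that the transition maps are open immersions. The paper compresses everything after this into ``whence the conclusion'', so your further steps (filteredness via common refinements, the description of $A$-points, the stack condition, and carrying local geometricity through the filtered union of open substacks) expand the same argument rather than change it; the only slip is a citation, since the items of the algebraic finiteness theorem you need are its second and third, not ``(3),(4)''.
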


\begin{proof}
	In both cases, we define
	\[ \bfCons(X) \coloneqq \colim_P \bfConsPhyp(X) \ , \]
	where the colimit is taken over subanalytic (resp., algebraic) stratifications of $X$.
	Using \cref{thm:subanalytic_stratified_spaces_are_conically_refineable} (resp., \ref{thm:algebraic_stratified_spaces_are_conically_refineable_and_categorically_finite}), we see that the assumptions of \cref{prop:categorically_compact_implies_representable} are satisfied.
	On the other hand, \cref{prop:moduli_coarsening} implies that the transition morphisms are representable by open Zarisi immersions, whence the conclusion.
\end{proof}


\subsection{Moduli of Perverse hyperconstructible hypersheaves}
     
We now turn our attention to the moduli of \textit{perverse} sheaves.
In light of \cref{tau_flatness_stable_under_pullback}, we can make the following definition:

\begin{defin}\label{def:pPerv_stack}
	Let $(X,P)$ be a  stratified space with $P$ finite and let $\pfrak \colon P \to \mathbb Z$ be a function.
	We let
	\[ \pPervP(X) \subset \bfConsPhyp(X) \] 
	be the sub-prestack parametrizing \smash{$\ptauhypX$}-flat hypersheaves, see \cref{notation:perverse_t_structure} and \cref{def:perverse_flatness}.
	We refer to $\pPervP(X)$ as the \textit{derived prestack of perverse hyperconstructible hypersheaves on $(X,P)$}.
\end{defin}

\begin{rem}\label{rem:Perv_1_truncated}
	In the setting of \Cref{def:pPerv_stack}, let $S = \Spec(A) \in \Aff_k$ be an \emph{underived} affine scheme and let $F, G$ be two \smash{$\ptauhypX$}-flat families of constructible sheaves over $S$.
	In particular, both $F$ and $G$ belong in $\pHSh(X;\Mod_A)$.
	The axioms of a $t$-structure therefore imply that
	\[ \Map_{\HSh(X;\Mod_A)}(F,G) \simeq \tau_{\geqslant 0} \Hom_{\HSh(X;\Mod_A)}(F,G) \]
	is discrete.
	In particular, \cite[Proposition 2.3.4.18]{HTT} implies that $\pPervP(X)$ is a $1$-truncated derived prestack.
\end{rem}

\begin{lem}\label{lem:descent_PervP_X}
	Let  $(X,P)$ be a stratified space with $P$ finite.
	Then, the presheaf 
	 \[ {}^{\pfrak}\mathbf{Perv}_P(-): \Open(X)\op \to \PSh(\dAff_k) \]
	satisfies hyperdescent.
\end{lem}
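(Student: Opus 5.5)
Since $\bfConsPhyp(-)$ already satisfies hyperdescent by \cref{lem:descent_ConsP_X}, and ${}^{\pfrak}\mathbf{Perv}_P(U)(\Spec(A))$ is by \cref{def:pPerv_stack} the full sub-$\infty$-groupoid of $\bfConsPhyp(U)(\Spec(A))$ cut out by a condition on objects, I will reduce everything to the locality of that condition. Fix $A\in\dCRing_k$, an open $U\subset X$ and a hypercover $U_\bullet\to U$ by opens. Evaluating at $\Spec(A)$, there is a commutative square whose horizontal arrows are the canonical maps to $\lim_{[n]}$ of the corresponding values on $U_n$, and whose vertical arrows are the inclusions of full sub-$\infty$-groupoids. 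The bottom arrow (for $\bfConsPhyp$) is an equivalence. A limit of full sub-$\infty$-groupoids (the inclusions are monomorphisms of spaces) is the full sub-$\infty$-groupoid of the limit consisting of points whose images all lie in the subobjects. So it suffices to show that $F\in\Conshyp_{P,\omega}(U;\Mod_A)$ is $\ptau_U$-flat relative to $A$ if and only if each restriction $F|_{U_n}$ is $\ptau_{U_n}$-flat (only $n=0$ matters, since the $U_n$ with $n\geq 0$ cover $U$ and restrictions of flat objects to smaller opens are flat).

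The main step is then locality of flatness on open covers. First I note that restriction $j^{\ast,\hyp}$ along an open $j\colon V\hookrightarrow U$ commutes with $M\otimes_A^{\hyp}(-)$ by \cref{hyp_coeff_change_inverse_image}, so I only need: $G\in\HSh(U;\Mod_A)$ is $\ptau_U$-connective (resp.\ coconnective) iff each $G|_{V_i}$ is, for an open cover $\{V_i\}$ of $U$. Here $U$ is stratified by $P$ (finite), and $V_i$ is stratified by $P$ too. For connectivity I will use \cref{lem:checking_perverse_on_stalks}-\eqref{perverse_connective_stalks}: the condition is on stalks of $i_p^{\ast,\hyp}G$, and stalks are local, since $(i_p^{\ast,\hyp}G)|_{V_i\cap U_p}\simeq i_p^{\ast,\hyp}(G|_{V_i})$. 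For coconnectivity I will use \cref{lem:checking_perverse_on_stalks}-\eqref{perverse_coconnective_stalks} together with the base change $(i_p^{!,\hyp}G)|_{V_i\cap U_p}\simeq i_p^{!,\hyp}(G|_{V_i})$. This base change is the only point requiring care.

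I would prove the base change of $!$-restriction along open immersions from the definitions in \cref{locally_closed_lower_shriek}. Write $i_p=i_{\geq p}\circ\iota_p$. Restriction to an open commutes with $j^{\ast,\hyp}$ trivially, and for the closed part the recollement fiber sequence $\iota^{!,\hyp}G\to\iota^{\ast,\hyp}G\to\iota^{\ast,\hyp}j_\ast j^{\ast,\hyp}G$ (used in the proof of \cref{more_exactness}) is compatible with restriction to opens, because $\ast$-pullback and pushforward along an open inclusion satisfy base change with restriction to a further open (as $j_\ast$ is computed sectionwise). Hence $!$-restriction is local on $U$. Combined with the stalkwise criteria, this shows that flatness can be checked on an open cover, which concludes the proof.
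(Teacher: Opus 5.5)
Your proposal is correct and follows essentially the same route as the paper. Both arguments use the hyperdescent of $\bfConsPhyp(-)$ and the fact that $\pPervP$ is cut out by a condition on objects to reduce to showing that $\ptau$-flatness can be checked on an open cover, which then comes down to the stalkwise locality of the standard $t$-structure. The only difference is that you spell out the compatibility of $i_p^{!,\hyp}$ with restriction to opens, which the paper uses tacitly when it simply cites \cref{locality_t_structure}.
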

\begin{proof}
	From Lemmas \ref{tau_flatness_stable_under_pullback} and \ref{lem:descent_ConsP_X}, it is enough to check that for every $S \in \dAff_k$ and every $F \in \bfConsPhyp(X)(S)$, if there exists an open cover $\{U_i\}_{i \in I} $ of $X$ such that each restriction \smash{$F|_{U_i} \in \bfConsPhyp(U_i)(S)$} is $\tensor*[^{\pfrak}]{\tau}{^{\hyp}_{U_i}}$-flat, then $F$ is \smash{$\ptauhypX$}-flat.
	This follows from \cref{locality_t_structure}.
\end{proof}

\begin{lem}\label{Perv_is_a_stack}
	Let $(X,P)$ be a conically stratified space with locally weakly contractible strata and let $\pfrak \colon P \to \mathbb Z$ be a function.
	Assume that $(X,P)$ is locally categorically compact and that $P$ is finite.
	Then, the prestack $\pPervP(X)$ satisfies flat hyperdescent, and it is thus a derived stack.
\end{lem}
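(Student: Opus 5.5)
The plan is to deduce descent for $\pPervP(X)$ from descent for the ambient stack $\bfConsPhyp(X)$, which is already available. Since $(X,P)$ is conical with locally weakly contractible strata, it is exodromic, so \cref{exodromy_compact_objects}-(2) shows that $\bfConsPhyp(X)$ satisfies flat hyperdescent. Moreover, $\pPervP(X)\subset\bfConsPhyp(X)$ is a sub-prestack defined by a \emph{property} of objects, namely $\ptauhypX$-flatness, and this property is stable under base change by \cref{tau_flatness_stable_under_pullback}. Hence $\pPervP(X)(\Spec A)$ is a union of connected components of $\bfConsPhyp(X)(\Spec A)$. For a flat hypercover $\Spec B^\bullet\to\Spec A$, the limit of the groupoids $\pPervP(X)(\Spec B^n)$ is then the full subgroupoid of $\lim\bfConsPhyp(X)(\Spec B^n)\simeq\bfConsPhyp(X)(\Spec A)$ spanned by descent data whose components are flat. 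Such a datum corresponds to some $F\in\Conshyp_{P,\omega}(X;\Mod_A)$ with $B^0\otimes_A^{\hyp}F$ flat relative to $B^0$. The whole statement therefore reduces to one claim: if $A\to B$ is faithfully flat and $B\otimes_A^{\hyp}F$ is $\ptauhypX$-flat relative to $B$, then $F$ is $\ptauhypX$-flat relative to $A$.

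This claim is exactly \cref{tau_flatness_flat_hyperdescent}, whose hypotheses match ours: $(X,P)$ is conically stratified with locally weakly contractible strata, locally categorically compact, and $P$ is finite. I recall the mechanism for completeness. By \cref{prop:perverse_flatness_reformulation}, flatness amounts to $\Tor$-amplitude conditions on the stalks of $i_p^{\ast,\hyp}F$ and $i_p^{!,\hyp}F$. Base change $B\otimes_A^{\hyp}(-)$ commutes with $i_p^{\ast,\hyp}$ by \cref{hyp_coeff_change_inverse_image}, and with $i_p^{!,\hyp}$ on constructible objects by \cref{prop:uppershriek_and_constructibility}-\eqref{prop:uppershriek_and_constructibility:uppershriek_change_of_coefficients}. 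It also commutes with taking stalks. Finally, $\Tor$-amplitude descends along faithfully flat maps.

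The main point of care is that in a flat hypercover $B^0$ need not be a single ring: the cover is by a possibly infinite disjoint union of affines. Two standard reductions handle this. First, since $\bfConsPhyp(X)$ is already a stack, descent for finite disjoint unions ($\pPervP(X)(A\times A')=\pPervP(X)(A)\times\pPervP(X)(A')$) follows because flatness splits along products. Second, stacks are tested on finite covers, so we may assume $B^0$ is a single faithfully flat $A$-algebra. With these reductions in place, the rest is formal, because a full subfunctor cut out by a base-change-stable, fpqc-local property of objects inherits hyperdescent from the ambient stack. The only genuinely nontrivial input, the $!$-base change on constructible sheaves, is already packaged in \cref{shriek_and_tensor_product}.
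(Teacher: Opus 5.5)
Your proposal is correct and is the paper's proof: it uses \cref{exodromy_compact_objects}-(2) for flat hyperdescent of $\bfConsPhyp(X)$, then reduces to descent of $\ptauhypX$-flatness along faithfully flat maps, which is \cref{tau_flatness_flat_hyperdescent}. Your extra bookkeeping (the full subgroupoid of the limit, and the reduction from hypercovers to a single finite faithfully flat cover) is what the paper's two-line proof leaves implicit.
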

\begin{proof}
    By \cref{exodromy_compact_objects}-(2), it is enough to check that \smash{$\ptauhypX$}-flatness satisfies flat hyperdescent.
    This is the content of \cref{tau_flatness_flat_hyperdescent}.
\end{proof}

\begin{lem}\label{prestack_perv_and_refinement}
	Let $(X,R)$ be a stratified space and let $\phi \colon R \to P$ be a map of posets.
	Assume that both $R$ and $P$ are finite.
	Let $ \pfrak \colon P\to \bZ $ be any function and write $ \rfrak $ for the composite $ \pfrak \phi \colon R\to \bZ $.
	Then the  square of prestacks over $k$
	\[ \begin{tikzcd}
		{}^{\pfrak} \mathbf{Perv}_P(X)\arrow[hook]{r} \arrow[hook]{d} &{}^{\mathfrak r} \mathbf{Perv}_R(X)\arrow[hook]{d}\\
		\bfConsPhyp(X)\arrow[hook]{r}  &\bfConsRhyp(X)
	\end{tikzcd} \]
	is a pullback.
\end{lem}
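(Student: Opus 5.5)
Since all four prestacks are defined objectwise as full sub-$\infty$-groupoids, the square is a pullback as soon as it is so after evaluating on each $S=\Spec(A)\in\dAff_k$. The horizontal maps are induced by the inclusion $\ConsPhyp(X;\Mod_A)\subset\ConsRhyp(X;\Mod_A)$: a hypersheaf that is $P$-hyperconstructible is also $R$-hyperconstructible, since each $R$-stratum lies inside a $P$-stratum. The vertical maps are inclusions of full subgroupoids. Compactness of stalks does not depend on the stratification. Hence all four maps are monomorphisms of spaces, and it suffices to prove one equality of objects: for $F\in\bfConsPhyp(X)(S)$, $F$ is $\ptauhypX$-flat relative to $A$ if and only if it is $\rtauhypX$-flat relative to $A$.

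Both flatness notions are tested on $M\otimes_A^{\hyp}F$ for $M\in\Mod_A^\heartsuit$. Each notion asks whether $M\otimes_A^{\hyp}F$ lies in the heart of the corresponding perverse $t$-structure on the same ambient category $\HSh(X;\Mod_A)$. By \cref{Perv_and_refinement}, applied with $\rfrak=\pfrak\phi$, an object of $\HSh(X;\Mod_A)$ is $\ptauhypX$-connective (resp., coconnective) if and only if it is $\rtauhypX$-connective (resp., coconnective). Therefore ${}^{\pfrak}\mathrm{Perv}^{\hyp}(X;\Mod_A)={}^{\rfrak}\mathrm{Perv}^{\hyp}(X;\Mod_A)$ as full subcategories of $\HSh(X;\Mod_A)$. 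Consequently $\ptauhypX$-flatness and $\rtauhypX$-flatness of $F$ coincide, and the square is cartesian on $S$-points.

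The only potential subtlety is ensuring that \cref{Perv_and_refinement} applies without conicality hypotheses. It does, because its proof uses only \cref{cor:t-structure_on_stalks} and \cref{coconnectivity_and_strata}, and the latter needs $P$ (and $R$) noetherian, which holds since both are finite. One should also check that $\phi$ is continuous, which holds because it is a map of posets, so that $X\to R\to P$ is a stratification. With these verified, the statement holds for an arbitrary stratified space $(X,R)$.
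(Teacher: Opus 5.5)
Your proof is correct and follows the paper's argument, which simply states that the lemma is immediate from \cref{Perv_and_refinement}. You have unpacked that one line into the objectwise reduction, using that all four maps are inclusions of full subgroupoids, and the observation that the $\pfrak$- and $\rfrak$-perverse hearts in $\HSh(X;\Mod_A)$ coincide, so the two flatness conditions agree.
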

\begin{proof}
	Immediate from \cref{Perv_and_refinement}.
\end{proof}

\begin{notation}\label{notation:perfect_diagrams}
	For integers $a \leqslant b$, we let 
	\[ \bfPerf_{[a,b]} \subset \bfPerf \]
	be the substack  parametrizing perfect complexes of $ \Tor $-amplitude contained in $[a,b]$.
\end{notation}

\begin{notation}
	Given an $\infty$-category $\cC$, we write $\bfPerf^\cC$ (resp., $\bfPerf^\cC_{[a,b]}$) for the derived stack sending $A \in \dAff$ to the maximal $\infty$-groupoid contained in $\Fun(\cC, \Perf(A))$ (resp., $\Fun(\cC, \Perf(A)_{[a,b]})$).
	Notice that, when $\cC = \Pi_\infty(X)$ for some locally weakly contractible topological space, we have
	\begin{equation*}
		\bfPerf^{\Pi_\infty(X)} = \bfLoc(X) \rlap{\ .}
	\end{equation*}
	In this case, we use the notation $\bfLoc(X)_{[a,b]}$.
\end{notation}

\begin{lem}\label{openness_general}
	Let $\cC$ be an $\infty$-category such that $\pi_0(\cC^{\simeq})$ is finite.
	Let $a \leqslant b$ be integers.
	Then the induced morphism of derived stacks
	\[ \bfPerf^\cC_{[a,b]} \to \bfPerf^{\cC} \]
	is representable by an open immersion.
\end{lem}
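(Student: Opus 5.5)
The plan is to reduce to the case of a single perfect complex, namely the classical openness of $\bfPerf_{[a,b]} \subset \bfPerf$. A functor $F \colon \cC \to \Perf(A)$ takes values in $\Perf(A)_{[a,b]}$ if and only if $F(c)$ has $\Tor$-amplitude in $[a,b]$ for every object $c$. The $\Tor$-amplitude of $F(c)$ depends only on the isomorphism class of $F(c)$, hence only on the class of $c$ in $\pi_0(\cC^{\simeq})$. Choose representatives $c_1, \dots, c_n$ of the finitely many classes. Evaluation at $c_i$ defines a morphism of derived stacks $\mathrm{ev}_{c_i} \colon \bfPerf^\cC \to \bfPerf$, and I expect the square
\[ \begin{tikzcd}
	\bfPerf^\cC_{[a,b]} \arrow{r} \arrow{d} & \prod_{i=1}^n \bfPerf_{[a,b]} \arrow{d} \\
	\bfPerf^\cC \arrow{r}{(\mathrm{ev}_{c_i})_i} & \prod_{i=1}^n \bfPerf
\end{tikzcd} \]
to be a pullback.

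Checking the pullback is pointwise on $A$. The left vertical map is a full inclusion of groupoids, since $\Perf(A)_{[a,b]} \subset \Perf(A)$ is full and an equivalence of functors into $\Perf(A)$ lying in $\Perf(A)_{[a,b]}$ is an equivalence there. Being a full sub-groupoid cut out by a condition on the objects $F(c_i)$ gives exactly the fiber product. Compatibility with base change $A \to B$ holds because $B \otimes_A (-)$ preserves $\Tor$-amplitude bounds; this is the only reason the substack is well defined.

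Open immersions are stable under finite products and base change, so it remains to know that $\bfPerf_{[a,b]} \to \bfPerf$ is representable by an open immersion. This is the standard fact that having $\Tor$-amplitude in $[a,b]$ is an open condition for perfect complexes; see Toën--Vaquié, or \cite[\S 2.8.4]{Lurie_SAG}, or \cite[\S II.2]{DPS_Stable_pairs}. Concretely, for a perfect $M$ over $A$, the locus where $M$ has $\Tor$-amplitude in $[a,b]$ is detected on residue fields, where the fibers are upper semicontinuous in the point. This locus is Zariski open in $\Spec(\pi_0 A)$, and its preimage represents the fiber.

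The main obstacle is this last input, the openness for a single perfect complex. I would cite it rather than reprove it; if a proof were needed, I would reduce it to upper semicontinuity of $\dim \pi_i(M \otimes_A \kappa(x))$ together with the derived Nakayama lemma. The finiteness of $\pi_0(\cC^{\simeq})$ is used only to keep the product finite, since an infinite intersection of opens need not be open.
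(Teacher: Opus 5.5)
Your proposal is correct and follows essentially the same route as the paper. The paper also evaluates at a finite set of representatives of $\pi_0(\cC^{\simeq})$ to get the cartesian square with $\prod_{c} \bfPerf_{[a,b]} \to \prod_{c} \bfPerf$. It then deduces openness of the single-complex map from \cite[Proposition 6.1.4.5]{Lurie_SAG}, which is precisely the residue-field detection plus openness input you describe.
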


\begin{proof}
	Choose a finite set $S$ of objects of $\cC$ representing all equivalence classes of objects in $\cC$.
	Then the diagram of derived stacks
	\[ \begin{tikzcd}
		\bfPerf^\cC_{[a,b]} \arrow{r} \arrow{d} &\prod_{c \in S} \bfPerf_{[a,b]}  \arrow{d} \\
		\bfPerf^{\cC} \arrow{r} & \prod_{c \in S}  \bfPerf   \ .
	\end{tikzcd} \]
	is cartesian.
	Since $S$ is finite, \cite[Proposition 6.1.4.5]{Lurie_SAG} implies that the right vertical map is an open immersion.
	Hence, so is the left vertical map.
	\personal{Indeed, the first half of the cited proposition -- applied with $A = B$ -- guarantees that $F \in \Perf(A)$ has $ \Tor $-amplitude in $[a,b]$ if and only if for every prime ideal $\pfrak \in \Spec(\pi_0(A))$, one has that $F \otimes_A \kappa(\pfrak)$ has homological amplitude in $[a,b]$.
	The second half, implies that the set of such primes is an open subset of $\Spec(A)$.}
\end{proof}

The condition of \cref{openness_general} is automatic for compact $\infty$-categories: 

\begin{lem}\label{compact_implies_finite_pi_0}
	Write \smash{$ \Cat_{\infty}^{\pi_{0}\textup{-fin}} $} for the full subcategory of $ \Cat_{\infty} $ spanned by those $ \infty $-categories $ \cC $ for which $\pi_0(\cC^{\simeq})$ is finite.
	Then \smash{$ \Cat_{\infty}^{\pi_{0}\textup{-fin}} $} is closed under retracts, finite coproducts, and pushouts.
	In particular, \smash{$ \Cat_{\infty}^{\pi_{0}\textup{-fin}} $} contains all compact $ \infty $-categories.
\end{lem}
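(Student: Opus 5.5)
The plan is to study the functor $\pi_0((-)^{\simeq}) \colon \Cat_\infty \to \dSet$. It preserves all colimits. Indeed, $(-)^{\simeq}$ is right adjoint to the inclusion $\Spc \hookrightarrow \Cat_\infty$, so it does not preserve colimits in general. However, $\pi_0(\cC^{\simeq})$ agrees with $\pi_0$ of the groupoidification $|\cC|$ modulo a quotient only, so I will argue directly rather than through an adjunction.

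For retracts: if $\cC$ is a retract of $\cD$, then $\pi_0(\cC^{\simeq})$ is a retract of $\pi_0(\cD^{\simeq})$ in $\dSet$, because $\pi_0((-)^{\simeq})$ is a functor. Hence it injects into a finite set and is finite. For finite coproducts, $(\cC \sqcup \cD)^{\simeq} \simeq \cC^{\simeq} \sqcup \cD^{\simeq}$, since a coproduct of $\infty$-categories is computed on underlying simplicial sets (for instance in the Joyal model structure). So $\pi_0$ is the disjoint union of finite sets, and the empty category covers the empty coproduct.

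For pushouts $\cC \sqcup_{\cA} \cD$, the key claim is that the canonical functor $\cC \sqcup \cD \to \cC \sqcup_{\cA} \cD$ is essentially surjective. I would prove this via the model: the pushout can be computed as a homotopy pushout of simplicial sets followed by a fibrant replacement in the Joyal model structure. A Joyal fibrant replacement $K \to RK$ is a categorical equivalence, hence essentially surjective, and its vertices all come from $K$. The vertices of the simplicial-set pushout (using a cofibrant model, say after replacing $\cA \to \cC$ by a monomorphism) are images of vertices of $\cC \sqcup \cD$. Alternatively, one can use that the essential image of a colimit is generated under equivalences by images of the pieces: a full subcategory closed under equivalences containing the images would receive a factorization through the universal property. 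This is where I expect the main subtlety. The cleanest route is to note that essentially surjective functors form the left class of the factorization system (essentially surjective, fully faithful) on $\Cat_\infty$, and left classes are closed under colimits in the arrow category. Then $\cC \sqcup \cD \to \cC\sqcup_\cA\cD$ is essentially surjective, being the colimit of the essentially surjective maps $\cA\to\cA$, $\cC\to\cC$, $\cD\to\cD$ composed appropriately. Consequently $\pi_0$ of the pushout is a quotient of $\pi_0(\cC^{\simeq}) \sqcup \pi_0(\cD^{\simeq})$, which is finite.

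For the final claim, recall that every compact object of $\Cat_\infty$ is a retract of a finite colimit of the generators $\Delta^0$ and $\Delta^1$, both of which have finite $\pi_0$. More precisely, compact objects are retracts of objects built from $\emptyset$ by finitely many pushouts along $\partial\Delta^n \to \Delta^n$, i.e.\ finite simplicial sets viewed as $\infty$-categories, with each $\Delta^n$ having $n+1$ objects. Closure under finite coproducts, pushouts, and retracts then gives the result.
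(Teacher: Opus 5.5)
Your proof is correct and follows the same route as the paper: retracts and finite coproducts are immediate, pushouts are handled through essential surjectivity of $\cC\sqcup\cD\to\cC\sqcup_{\cA}\cD$, and compact $\infty$-categories are built from finitely many cells under pushouts and retracts; the paper only asserts the essential surjectivity, which your universal-property and Joyal-model arguments actually justify. Two cosmetic fixes: delete the unused opening claim that $\pi_0((-)^{\simeq})$ preserves all colimits, which is false because it fails for pushouts; and if you keep the factorization-system argument, state it as the observation that $\cC\sqcup\cD\to\cC\sqcup_{\cA}\cD$ is the cobase change of the essentially surjective fold map $\cA\sqcup\cA\to\cA$ along $\cA\sqcup\cA\to\cC\sqcup\cD$, since as written it describes a colimit of identity maps, which is again an identity.
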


\begin{proof}
	First observe that since finite sets are closed under retracts in $ \dSet $, the subcategory \smash{$ \Cat_{\infty}^{\pi_{0}\textup{-fin}} $} is closed under retracts.
	Second, note that the maximal sub-$ \infty $-groupoid functor $ (-)^{\simeq} \colon \Cat_{\infty} \to \Spc $ preserves coproducts, and $\pi_0 \colon \Spc \to \dSet$ is a left adjoint, the functor $ \cC \mapsto \pi_0(\cC^{\simeq}) $ also preserves coproducts.
	Since finite sets are closed under finite coproducts, we deduce that \smash{$ \Cat_{\infty}^{\pi_{0}\textup{-fin}} $} is closed under finite coproducts.
	For closure under pushouts, notice that given a span of $ \infty $-categories $ \cB \leftarrow \cA \to \cC $, there is a surjective functor $ \cB \sqcup \cC \to \cB \sqcup^{\cA} \cC $
	from the coproduct of $ \cB $ and $ \cC $ to the pushout.
	So if $ \pi_0(\cB^{\simeq}) $ and $ \pi_0(\cC^{\simeq}) $ are finite, then $ \pi_0((\cB \sqcup^{\cA} \cC)^{\simeq}) $ is also finite.

	For the final statement, recall that the full subcategory spanned by the compact $ \infty $-categories is the smallest full subcategory containing $ \emptyset $, $ \ast $, and $ [1] $ and closed under pushouts.
\end{proof}

\begin{lem}\label{Perv_as_pullback}
	Let $(X,P)$ be a conically stratified space with locally weakly contractible strata and let $\pfrak \colon P \to \bZ$ be a function. 
	Assume that $(X,P)$ is locally categorically compact and that $P$ is finite.
	Then, the square of derived stacks 
	\[ \begin{tikzcd}[column sep=9em]
		\pPervP(X) \arrow{r}{(i_p^{\ast,\hyp}, i_p^{!,\hyp} )_{p\in P}} \arrow{d} & \prod_{p\in P} \bfLoc(X_p)_{[\pfrak(p),+\infty)}\times  \bfLoc(X_p)_{(-\infty,\pfrak(p)]}\arrow{d} \\
		\bfConsPhyp(X) \arrow[r, "{(i_p^{\ast,\hyp}, i_p^{!,\hyp} )_{p\in P}}"'] & \prod_{p\in P} \bfLoc(X_p) \times \bfLoc(X_p)
	\end{tikzcd} \]
	is a pullback.
\end{lem}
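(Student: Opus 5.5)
The plan is to check the pullback condition objectwise on $A$-points, $S = \Spec(A) \in \dAff_k$. All four corners send $S$ to $\infty$-groupoids. Both vertical maps are inclusions of full sub-$\infty$-groupoids (unions of path components), since flatness and $\Tor$-amplitude are properties of an object. So the square is a pullback as soon as two things hold. First, the horizontal maps are well defined. Second, an object $F \in \Conshyp_{P,\omega}(X;\Mod_A)$ lies in $\pPervP(X)(S)$ exactly when, for every $p \in P$, $i_p^{\ast,\hyp}(F)$ has $\Tor$-amplitude in $[\pfrak(p),+\infty)$ and $i_p^{!,\hyp}(F)$ has $\Tor$-amplitude in $(-\infty,\pfrak(p)]$. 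Here $\Tor$-amplitude of a locally constant sheaf on $X_p$ with perfect stalks means $\Tor$-amplitude of its stalks, which is how $\bfLoc(X_p)_{[a,b]}$ is defined via $\bfPerf^{\Pi_\infty(X_p)}_{[a,b]}$ and evaluation at points.

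\textbf{Well-definedness of the bottom arrow.} The strata $X_p$ with the trivial stratification are exodromic with locally weakly contractible strata, so $\bfLoc(X_p)(S)$ is the groupoid of locally hyperconstant hypersheaves on $X_p$ with perfect stalks, by \cref{exodromy_compact_objects}.

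For $i_p^{\ast,\hyp}$: it preserves constructibility (\cref{rem:pullback_functoriality}) and commutes with stalks, so it lands in $\Conshyp_{\{p\},\omega}(X_p;\Mod_A)$.

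For $i_p^{!,\hyp}$: \cref{prop:uppershriek_and_constructibility}-\eqref{prop:uppershriek_and_constructibility:uppershriek_and_compact_stalks} with $\cE = \Mod_A$ and $S = \{p\}$ shows that it sends $\Cons_{P,\omega}^{\hyp}(X;\Mod_A)$ into $\Cons_{\{p\},\omega}^{\hyp}(X_p;\Mod_A)$. This uses that $(X,P)$ is conical, locally categorically compact and $P$ is finite, which is exactly why those hypotheses appear in the statement.

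Compatibility with base change $A \to B$ is needed so that the arrows are morphisms of prestacks. For $i_p^{\ast,\hyp}$ this is \cref{hyp_coeff_change_inverse_image}. For $i_p^{!,\hyp}$ it is \cref{prop:uppershriek_and_constructibility}-\eqref{prop:uppershriek_and_constructibility:uppershriek_change_of_coefficients} applied to $L = B \otimes_A (-)$. This gives a natural equivalence $B\otimes_A^{\hyp} i_p^{!,\hyp}(F) \simeq i_p^{!,\hyp}(B \otimes_A^{\hyp} F)$, which I would need to promote to a coherent natural transformation of functors on $\dAff_k\op$. I expect this coherence to be the main (mostly bookkeeping) obstacle. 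I would handle it by observing that the exchange transformations arise functorially from the adjunction $i_{p,!}^{\hyp} \dashv i_p^{!,\hyp}$ and from the commutation of $L^{\hyp}$ with $i_{p,!}^{\hyp}$ (\cref{right_adjoint_and_!_closed_immersion}). Since these are compatible with composition of base changes, they assemble into a map of prestacks, and the objectwise equivalence then gives the natural equivalence.

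\textbf{Identification of the fibre.} By \cref{prop:perverse_flatness_reformulation}, which applies under the same hypotheses, $F$ is $\ptauhypX$-flat relative to $A$ if and only if the stalks of $i_p^{\ast,\hyp}(F)$ have $\Tor$-amplitude in $[\pfrak(p),+\infty)$ and the stalks of $i_p^{!,\hyp}(F)$ have $\Tor$-amplitude in $(-\infty,\pfrak(p)]$, for all $p$. By the above, this says precisely that the image of $F$ in $\prod_p \bfLoc(X_p)\times\bfLoc(X_p)$ lies in the product of the substacks in the top right corner.

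Hence on $S$-points the top-left corner is the full subgroupoid of $\bfConsPhyp(X)(S)$ lying over the top-right full subgroupoid. This is the fibre product, since a fibre product along a monomorphism of $\infty$-groupoids is the preimage. The square is therefore cartesian in prestacks, and hence in derived stacks, since limits of stacks are computed objectwise.
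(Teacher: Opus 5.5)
Your proposal is correct and follows essentially the same route as the paper. The bottom arrow is well defined by \cref{prop:uppershriek_and_constructibility}. Both the well-definedness of the top arrow and the cartesianness come from the two implications of \cref{prop:perverse_flatness_reformulation}, checked on $\Spec(A)$-points where the vertical maps are inclusions of unions of components.

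You cite item (2) of \cref{prop:uppershriek_and_constructibility} (preservation of perfect stalks) rather than the paper's item (1), and you use item (3) for compatibility with base change. These make explicit two points that the paper's three-line proof leaves implicit.
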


\begin{proof}
	Since $(X,P)$ is conically stratified with locally weakly contractible strata and locally categorically compact, \cref{prop:uppershriek_and_constructibility}-\eqref{prop:uppershriek_and_constructibility:preservation} ensures that the lower horizontal arrow is well-defined.
	The implication (1) $\Rightarrow$ (2) in \cref{prop:perverse_flatness_reformulation} guarantees that the upper horizontal arrow is well-defined as well.
	The implication (2) $\Rightarrow$ (1) from the same theorem implies that the above square is a pullback.
\end{proof}

\begin{theorem}\label{thm:representability_perv}
	Let $(X,R)$ be a conically stratified space with locally weakly contractible strata, let $\phi \colon R\to P$ be a map of posets, and let $ \pfrak \colon P\to \bZ $ be a function.
	Assume that $R$ and $P$ are finite and that $ (X,R) $ is locally categorically compact.
	Then:
	\begin{enumerate}\itemsep=0.2cm
		\item The derived prestack $\pPervP(X)$ satisfies flat hyperdescent and it is thus a derived stack.
	\end{enumerate}
	Assume furthermore that $(X,R)$ is categorically compact.
	Then:
	\begin{enumerate}\itemsep=0.2cm\setcounter{enumi}{1}
        \item The morphism $\pPervP(X) \hookrightarrow \bfConsPhyp(X)$ is representable by open Zariski immersions.
		
		\item The derived stack $ \pPervP(X) $ is $1$-Artin and locally of finite presentation.
	\end{enumerate}
\end{theorem}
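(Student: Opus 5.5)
The idea is to reduce everything to the refined stratification $(X,R)$, where all the earlier results apply, and then descend along the coarsening $\phi$. \cref{lem:checking_perverse_on_stalks}, \cref{prop:perverse_flatness_reformulation} and \cref{Perv_as_pullback} require conicality, local categorical compactness and finiteness of the poset. These hypotheses hold for $(X,R)$ but not a priori for $(X,P)$.

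For item (1), consider first the stratification $R$ with perversity $\rfrak \coloneqq \pfrak\phi$. \cref{Perv_is_a_stack} applies directly to $(X,R)$ and shows that ${}^{\rfrak}\mathbf{Perv}_R(X)$ satisfies flat hyperdescent. To pass to $P$, I would use \cref{prestack_perv_and_refinement}, which exhibits $\pPervP(X)$ as the pullback of ${}^{\rfrak}\mathbf{Perv}_R(X)$ along $\bfConsPhyp(X) \hookrightarrow \bfConsRhyp(X)$. The stratified space $(X,P)$ is exodromic by \cref{thm:stability_coarsening}. Its strata are disjoint unions of $R$-strata, so they are still locally weakly contractible. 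Hence \cref{exodromy_compact_objects}-(2) shows that $\bfConsPhyp(X)$ and $\bfConsRhyp(X)$ are stacks. A pullback of stacks is a stack, which proves (1).

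For item (2), I would first treat $P = R$. Here \cref{Perv_as_pullback} writes ${}^{\rfrak}\mathbf{Perv}_R(X)$ as the pullback of a product of the inclusions $\bfLoc(X_r)_{[\rfrak(r),+\infty)} \to \bfLoc(X_r)$ and $\bfLoc(X_r)_{(-\infty,\rfrak(r)]}\to\bfLoc(X_r)$ over the finite set $R$. Since $\bfLoc(X_r) = \bfPerf^{\Pi_\infty(X_r)}$, \cref{openness_general} makes each of these inclusions an open immersion, provided $\pi_0(\Pi_\infty(X_r)^{\simeq})$ is finite. To check that finiteness, note that $(X_r,\{r\})$ is categorically compact by \cref{lem:pulling_back_to_a_locally_closed_subposet_preserves_categorical_finiteness_and_compactness}, and then apply \cref{compact_implies_finite_pi_0}. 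Open immersions of this kind are stable under finite products and base change, so ${}^{\rfrak}\mathbf{Perv}_R(X) \to \bfConsRhyp(X)$ is open. For general $P$, the pullback square of \cref{prestack_perv_and_refinement} transfers this conclusion to $\pPervP(X)\to\bfConsPhyp(X)$.

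For item (3), $(X,P)$ is categorically compact by \cref{prop:categorical_finiteness_and_compactness_are_stable_under_coarsening}. So \cref{prop:categorically_compact_implies_representable} makes $\bfConsPhyp(X)$ locally geometric and locally of finite presentation, and by (2) so is its open substack $\pPervP(X)$. By \cref{rem:Perv_1_truncated}, $\pPervP(X)$ is $1$-truncated on underived affines. The remaining task, and the step I expect to be the main obstacle, is to upgrade this to the statement that it is $1$-Artin. My approach is to note that a locally geometric stack is a union of $n$-geometric opens; for each of these, the truncation bound shows that the diagonal is representable by algebraic spaces, so it is $1$-geometric. To justify this I would cite \cite[\S II.2]{DPS_Stable_pairs}, where it is shown that open substacks of $\cM_\cC$ parametrizing $\tau$-flat objects are $1$-Artin; the hypotheses there are met once openness of flatness, proved in (2), is known.
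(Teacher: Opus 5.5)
Your proof follows the paper's proof step for step. Item (1) goes through \cref{Perv_is_a_stack} for $(X,R)$ and the pullback square of \cref{prestack_perv_and_refinement}. Item (2) goes through \cref{Perv_as_pullback}, \cref{openness_general} and \cref{compact_implies_finite_pi_0}, then transfers to $P$. Item (3) uses categorical compactness of $(X,P)$, \cref{prop:categorically_compact_implies_representable}, openness from (2) and \cref{rem:Perv_1_truncated}. Two of your justifications need repair, though the structure is unaffected.

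\textbf{Local weak contractibility of the $P$-strata.} The strata of $(X,P)$ are unions of $R$-strata only as sets, not as topological coproducts. For instance, $[0,1]=\{0\}\cup(0,1]$ under the trivial coarsening. So local weak contractibility does not follow formally from that of the $R$-strata. The Hawaiian earring, stratified by its wedge point, has locally weakly contractible strata but is not itself locally weakly contractible. The fact is true here, but it rests on the conical structure of $(X,R)$; the paper cites \cite[Lemma 5.2.8]{Beyond_conicality} for it. You need it twice: for \cref{exodromy_compact_objects}-(2) in item (1), and to apply \cref{prop:categorically_compact_implies_representable} to $(X,P)$ in item (3).

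\textbf{The appeal to DPS in (3).} Citing \cite[\S II.2]{DPS_Stable_pairs} is not justified as stated. Their moduli of flat objects requires a $t$-structure on $\ConsPhyp(X;\Mod_k)$ itself. \cref{rem:comparison_with_DPS} supplies this only when $(X,P)$ is conically stratified, not for a genuine coarsening, and openness of flatness from (2) does not supply it either.

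The citation is also unnecessary, and the step is not a real obstacle. It is the standard fact that a locally geometric derived stack, locally of finite presentation, whose values on underived rings are $1$-groupoids, is $1$-Artin. That is exactly how the paper concludes from \cref{rem:Perv_1_truncated}, and it is what your own sketch via the diagonal establishes.

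If you want to keep a DPS-style argument, run it for the conical $(X,R)$. Then note that $\pPervP(X)$ is open in ${}^{\rfrak}\mathbf{Perv}_R(X)$: it is the base change of the open immersion of \cref{prop:moduli_coarsening} along the pullback square of \cref{prestack_perv_and_refinement}.
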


\begin{proof}
	Write $ \rfrak \coloneqq \pfrak \phi \colon R\to \bZ $.
	For item (1), observe that $ (X,R) $ and $ (X,P) $ are exodromic thanks to \cref{eg:conical_stratifications_are_exodromic} and \cref{thm:stability_coarsening}.
	Furthermore, \cite[Lemma 5.2.8]{Beyond_conicality} guarantees that they have locally weakly contractible strata.
	By \cref{exodromy_compact_objects}-(2), the prestacks $ \bfConsPhyp(X) $ and $ \bfConsRhyp(X) $ thus satisfy flat hyperdescent.
	Moreover, \cref{Perv_is_a_stack} shows that $ \rPervR(X) $ satisfies flat hyperdescent and the conclusion follows from \cref{prestack_perv_and_refinement}.
	
	We now prove item (2).		
	By \cref{prestack_perv_and_refinement}, it is enough to show that 
	\begin{equation*} 
		\rPervR(X) \hookrightarrow \bfConsRhyp(X)
     \end{equation*}
	is representable by open Zariski immersions.
	Since $(X,R)$ is categorically compact, \cref{lem:pulling_back_to_a_locally_closed_subposet_preserves_categorical_finiteness_and_compactness} shows that for every $r\in R$, the $\infty$-groupoid $\Pi_{\infty}(X_r)$ is compact and hence $\pi_0(\Pi_\infty(X_r))$ is finite by \cref{compact_implies_finite_pi_0}.
	Hence, \cref{openness_general} implies that the right vertical arrow of the pullback square provided by \cref{Perv_as_pullback} is representable by open Zariski immersions.
	The conclusion follows.
	
	We now prove (3).
	Observe that $(X,P)$ is categorically compact by \cref{prop:categorical_finiteness_and_compactness_are_stable_under_coarsening}.
	Hence,  \cref{prop:categorically_compact_implies_representable} shows that $\bfConsPhyp(X)$ is locally geometric and locally of finite presentation.
	Thus, item (2) implies that $\pPervP(X)$ is locally geometric and locally of finite presentation, and the conclusion follows from \cref{rem:Perv_1_truncated}.
\end{proof}

\begin{rem}[The conically stratified case]\label{rem:comparison_with_DPS}
	If $(X,P)$ is conically stratified (as opposed to conically refineable, as in \Cref{thm:representability_perv}).
	Then \cref{prop:uppershriek_and_constructibility} implies the perverse $t$-structure \smash{$\ptauhypX$} descends to a $t$-structure on $\ConsPhyp(X;\Mod_k)$.
	In this situation, $\pPervP(X)$ coincides exactly with the moduli of flat objects discussed in \cite[\S II.2.6]{DPS_Stable_pairs}.
\end{rem}

Examples where $ \bfConsPhyp(X) $ and $ \pPervP(X) $ are locally geometric and locally of finite presentation come in abundance:

\begin{corollary}\label{cor:examples_when_ConsP_is_locally_geometric}
	Let $(X,P)$ be a stratified space with $P$ finite, let $\pfrak \colon P\to \bZ$ be any function, and let $ k $ be a derived commutative ring.
	Assume one of the following conditions:
	\begin{enumerate}\itemsep=0.2cm
		\item The stratified space $(X,P)$ admits a refinement by a finite triangulation.

		\item The topological space $X$ is compact and $(X,P)$ admits the structure of a subanalytic stratified space in the sense of \Cref{def:subanalytic_stratified_space}.

		\item The stratified space $(X,P)$ admits the structure of an algebraic stratified space in the sense of \Cref{def:algebraic_stratified_space}.
	\end{enumerate}
	Then the derived prestacks $\bfConsPhyp(X)$ and $\pPervP(X)$ are derived stacks that are locally geometric and locally of finite presentation.
\end{corollary}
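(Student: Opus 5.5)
The strategy is to reduce all three cases to \cref{thm:representability_perv}, together with \cref{prop:categorically_compact_implies_representable} for $\bfConsPhyp(X)$. In each case I will produce a finite poset $R$, a map of posets $\phi \colon R \to P$, and a refinement $(X,R)$ of $(X,P)$ along $\phi$. This refinement must be conically stratified with locally weakly contractible strata, and it must be both categorically compact and locally categorically compact. Once such an $(X,R)$ exists, the three items of \cref{thm:representability_perv} show that $\pPervP(X)$ is a derived stack, representable by open immersions over $\bfConsPhyp(X)$, and $1$-Artin locally of finite presentation; in particular it is locally geometric.

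For $\bfConsPhyp(X)$, I need $(X,P)$ itself to be exodromic, categorically compact, and with locally weakly contractible strata. Exodromy and categorical compactness follow from \cref{thm:stability_coarsening} and \cref{prop:categorical_finiteness_and_compactness_are_stable_under_coarsening} applied to $(X,R)\to(X,P)$. Local weak contractibility of the strata of the coarsening is \cite[Lemma 5.2.8]{Beyond_conicality}, as already used in the proof of \cref{thm:representability_perv}. \Cref{prop:categorically_compact_implies_representable} then concludes, and \cref{exodromy_compact_objects}-(2) gives the stack property.

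Now the cases. In case (1), take $R = S$, the finite set of simplices of the triangulation $(V,S)$, with $\phi \colon S \to P$ the refinement map. By \cref{eg:conical_stratifications_are_exodromic}, $(\Delta^{(V,S)},S)$ is conical with contractible strata, and by \cref{finite_geometric_realization_simplicial_complex} it is locally categorically compact and, since $S$ is finite, categorically compact. Case (2) reduces to case (1) via \cref{thm:subanalytic_stratified_spaces_are_conically_refineable}-\eqref{thm:subanalytic_stratified_spaces_are_conically_refineable.4}, which provides a finite triangulation refining $(X,P)$ when $X$ is compact.

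Case (3) is the main obstacle. \Cref{thm:algebraic_stratified_spaces_are_conically_refineable_and_categorically_finite}-\eqref{thm:algebraic_stratified_spaces_are_conically_refineable_and_categorically_finite.1} only gives a finite conical refinement when $X$ is affine. For affine $X$ the argument above applies verbatim with that refinement $R$.

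For general $X$, I would cover it by finitely many affine opens $U_1,\dots,U_n$ and argue locally on $X$.
\begin{enumerate}
\item \Cref{lem:descent_ConsP_X} and \cref{lem:descent_PervP_X} express $\bfConsPhyp(X)$ and $\pPervP(X)$ as finite limits (via the Čech nerve, whose terms are again affine-covered algebraic stratified spaces) of the corresponding stacks on the $U_i$ and their intersections.
\item Flat descent is preserved by such limits.
\item Open immersions are stable under base change and limits of compatible open immersions over a limit diagram.
\end{enumerate}
For local geometricity of $\bfConsPhyp(X)$, I would instead use \cref{thm:algebraic_stratified_spaces_are_conically_refineable_and_categorically_finite}-\eqref{thm:algebraic_stratified_spaces_are_conically_refineable_and_categorically_finite.4} directly with \cref{prop:categorically_compact_implies_representable}, avoiding descent entirely. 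For $\pPervP(X)$, the openness of the inclusion into $\bfConsPhyp(X)$ then follows from the affine pieces: perverse flatness is local on $X$ by the proof of \cref{lem:descent_PervP_X}, so $\pPervP(X)$ is the finite intersection of the preimages of the open substacks $\pPervP(U_i)\subset\bfConsPhyp(U_i)$ under the restriction maps. A finite intersection of opens is open, which gives the claim.
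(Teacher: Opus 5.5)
Your argument is correct. For cases (1), (2) and for the claim about $\bfConsPhyp(X)$ in case (3) it follows the paper exactly. For $\pPervP(X)$ with $X$ non-affine, however, you take a different route.

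The paper uses the hyperdescent of \cref{lem:descent_PervP_X}, together with stability of derived stacks, local geometricity and local finite presentation under finite limits, to reduce to affine $X$, where \cref{thm:representability_perv} applies. You instead keep the global stack $\bfConsPhyp(X)$, which is already known to be locally geometric. You then write $\pPervP(X)=\bigcap_i \rho_i^{-1}\big(\pPervP(U_i)\big)$, where $\rho_i\colon \bfConsPhyp(X)\to\bfConsPhyp(U_i)$ is restriction and each $\pPervP(U_i)\subset\bfConsPhyp(U_i)$ is open by \cref{thm:representability_perv}-(2).

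This needs perverse flatness to be local on $X$ in both directions. The ``if'' direction is the argument of \cref{lem:descent_PervP_X}. The ``only if'' direction says that restriction to an open preserves flatness, because restriction commutes with $M\otimes_A^{\hyp}(-)$, $i_p^{\ast,\hyp}$ and $i_p^{!,\hyp}$. The paper only uses this implicitly, so it deserves a sentence in your write-up.

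Your route buys more than the paper's. It shows that $\pPervP(X)\hookrightarrow\bfConsPhyp(X)$ is an open immersion for every algebraic stratified space, as the introduction advertises. It also never has to deal with the overlaps $U_i\cap U_j$. The paper's reduction tacitly needs those overlaps to be covered by the affine case, which is automatic only for separated $X$. It also needs a Mayer--Vietoris induction to turn hyperdescent into finite limits.

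The stack property comes for free in your approach, since a subfunctor of a derived stack that is representable by open immersions is again a stack. Your enumerated \v{C}ech-nerve step is therefore superfluous. As phrased it is also inaccurate, because the \v{C}ech nerve gives a totalization rather than a finite limit, so you can simply drop it.
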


\begin{proof}
	In light of \cref{eg:conical_stratifications_are_exodromic} and \Cref{finite_geometric_realization_simplicial_complex}, item (1) is a special case of \cref{thm:representability_perv}.
	Similarly, by \Cref{thm:subanalytic_stratified_spaces_are_conically_refineable}-(4), item (2) is a special case of (1).
	Let us now prove (3).
	Note that by \Cref{prop:categorically_compact_implies_representable} and \Cref{thm:algebraic_stratified_spaces_are_conically_refineable_and_categorically_finite}-(2)(3), the derived prestack $ \bfConsPhyp(X) $ is a derived stack that is locally geometric and locally of finite presentation.
	Moreover, since the properties of being a derived stack, being locally geometric, and being locally of finite presentation are stable under finite limits, \Cref{lem:descent_PervP_X} reduces the claim for $ \pPervP(X) $ to the case where $ X $ is affine.
	To conclude, note that \Cref{thm:algebraic_stratified_spaces_are_conically_refineable_and_categorically_finite}-(1) shows that an affine algebraic stratified space satisfies the conditions of \cref{thm:representability_perv}-(3).
	The conclusion thus follows.
\end{proof}

\begin{cor}\label{cor:varying_stratification_perverse}
	In this statement we let $\pfrak$ denote the middle perversity function.
	\begin{enumerate}\itemsep=0.2cm
		\item Let $X$ be a compact subanalytic space.
		Then there exists a derived stack $\tensor*[^{\pfrak}]{\mathbf{Perv}}{}(X)$ over $k$ parametrizing perverse sheaves that are constructible with respect to \emph{some} subanalytic stratification of $X$.
		
		\item Let $X$ be a $\mathbb R$-algebraic variety.
		Then there exists a derived stack$\tensor*[^{\pfrak}]{\mathbf{Perv}}{}(X)$ parametrizing perverse sheaves that are constructible with respect to \emph{some} algebraic stratification of $X$.
	\end{enumerate}
	Furthermore, in both cases $\tensor*[^{\pfrak}]{\mathbf{Perv}}{}(X)$ is $1$-Artin and locally of finite presentation.
\end{cor}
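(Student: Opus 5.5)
We define the perverse moduli stack in the same way as $\bfCons(X)$ was defined in the proof of \cref{thm:varying_stratification}. Set
\[ \tensor*[^{\pfrak}]{\mathbf{Perv}}{}(X) \coloneqq \colim_P \pPervP(X) \ , \]
where the colimit runs over the subanalytic (resp., algebraic) stratifications $P$ of $X$, ordered by refinement, and $\pfrak$ is the middle perversity attached to each $P$. Middle perversity depends only on the dimension of the stratum. If $R \to P$ is a refinement, then $\pfrak_P \circ \phi$ and $\pfrak_R$ agree on those strata of $R$ that are open in the corresponding stratum of $P$, but they can differ on lower-dimensional pieces. This is the first point to check. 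Rather than appeal to \cref{prestack_perv_and_refinement} directly, I would use the classical fact that middle-perversity perverse sheaves do not depend on the stratification. In our setting the relevant statement is that, for $F$ already $P$-constructible, $\pfrak_P$-flatness and $\pfrak_R$-flatness coincide. Via \cref{prop:perverse_flatness_reformulation}, this reduces to a stalkwise support/cosupport dimension condition, which can be checked point by point. The standard argument (e.g. \cite[\S2.1]{BBD}) is that both conditions amount to $\dim \mathrm{supp}\, \mathcal H^{-i} \leq i$ together with the dual statement, applied fibrewise over $\pi_0$ and after tensoring with $M \in \Mod_A^\heartsuit$.

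Granting this, the square with vertices $\pPervP(X)$, $\tensor*[^{\pfrak}]{\mathbf{Perv}}{_R}(X)$, $\bfConsPhyp(X)$ and $\bfConsRhyp(X)$ is cartesian for every refinement $R \to P$. Then \cref{prop:moduli_coarsening} shows that every transition map $\pPervP(X) \to \tensor*[^{\pfrak}]{\mathbf{Perv}}{_R}(X)$ is an open immersion. Since the indexing category is filtered (any two stratifications have a common refinement, by \cref{thm:subanalytic_stratified_spaces_are_conically_refineable}, resp.\ Zariski intersections), $\tensor*[^{\pfrak}]{\mathbf{Perv}}{}(X)$ is a filtered union of opens. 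Moreover,
\[ \tensor*[^{\pfrak}]{\mathbf{Perv}}{}(X) \simeq \bfCons(X) \times_{\bfCons(X)} \ldots \]
More precisely, filtered colimits commute with the pullbacks above, so $\tensor*[^{\pfrak}]{\mathbf{Perv}}{}(X) \to \bfCons(X)$ is an open immersion.

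Each $\pPervP(X)$ is $1$-Artin and locally of finite presentation by \cref{cor:examples_when_ConsP_is_locally_geometric} together with \cref{thm:representability_perv}-(3) in the compact subanalytic case (through a finite triangulation). In the algebraic case the same conclusion holds after the Zariski-local reduction used there. A filtered union of open substacks that are each $1$-Artin and locally of finite presentation is again $1$-Artin and locally of finite presentation: geometricity and finite presentation are local, and $1$-truncatedness can be checked on points. Alternatively, $1$-truncatedness follows directly from \cref{rem:Perv_1_truncated}.

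The main obstacle is the first step: showing that middle-perversity flatness is independent of the refinement. The naive composite perversity $\pfrak_P\circ\phi$ is not the middle perversity of $R$, so \cref{prestack_perv_and_refinement} alone does not suffice. I would try to show that both conditions are equivalent to the stalkwise dimension estimates, using that $i^!$ to a stratum is computed by local cohomology of the link and that refining only adds pieces of strictly smaller dimension.
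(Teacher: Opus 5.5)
Your proposal is correct and is essentially the paper's argument: ${}^{\pfrak}\mathbf{Perv}(X)$ is the filtered colimit of the $\pPervP(X)$. The transition maps are open immersions by base change of \cref{prop:moduli_coarsening}, as in \cref{thm:varying_stratification}, and each term is $1$-Artin and locally of finite presentation by \cref{cor:examples_when_ConsP_is_locally_geometric}.

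The paper's one-line proof tacitly relies on the compatibility of middle perversities under refinement that you single out, since \cref{prestack_perv_and_refinement} only covers $\pfrak\circ\phi$. Your plan for that step works once you do three things. First, pass to a common finite conical refinement (Zariski-locally in the algebraic case), so that \cref{Perv_and_refinement} and \cref{prop:perverse_flatness_reformulation} apply. Second, reduce to residue fields of $\pi_0(A)$, using that the $i^{!}$-stalks are perfect. Third, invoke the real-dimension version of stratification independence (Kashiwara--Schapira's perversities for $\R$-constructible sheaves) rather than BBD's complex criterion.
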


\begin{proof}
	Immediate from \cref{thm:varying_stratification} and \cref{cor:examples_when_ConsP_is_locally_geometric}.
\end{proof}


\subsection{Application: the perverse cohomological Hall algebra}

To showcase the utility of the derived structure we obtained on $\pPervP(X)$, we construct a new example of a cohomological Hall algebra.
Let $X$ be a smooth curve over $\C$ equipped with $n$ distinct marked points $\{p_1, p_2, \ldots, p_n\}$.
We see $X$ as a stratified space over the poset $P=\{0 < 1\}$, equipped with the middle perversity function, that is 
\[ \pfrak(0) = 0 \ , \qquad \pfrak(1) = -1 \ . \]
Since $(X,P)$ is conically stratified, $\pPervP(X)$ is identified with the moduli of flat objects \cite[\S II.2.6]{DPS_Stable_pairs} associated to the stable $\infty$-category $\ConsPhyp(X;\Mod_\C)$ and the $t$-structure $\ptauhypX$, see \cref{rem:comparison_with_DPS}.
In particular, we obtain:

\begin{thm}\label{thm:perverse_CoHA}
	In the above situation, the Borel--Moore homology
	\begin{equation*}
		\mathrm{H}^{\mathrm{BM}}_\ast( \pPervP(X); \Q )
	\end{equation*}
	has a natural associative product, given by Hall convolution.
	A similar statement holds for $G$-theory in place of Borel--Moore homology.
	At the categorified level, $\mathrm{Coh}^b(\pPervP(X))$ supports an $\mathbb E_1$-monoidal structure given by Hall convolution.
\end{thm}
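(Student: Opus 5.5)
We will obtain the statement by feeding the perverse stack into the general Hall-algebra machinery of \cite{DPS_Stable_pairs}. That machinery takes as input a compactly generated stable $\infty$-category of finite type with a $t$-structure whose moduli of flat objects $\cM^\flat$ satisfies a short list of axioms. Given those axioms, it produces the convolution diagram
\[ \cM^\flat \times \cM^\flat \xleftarrow{\ p\ } \cM^{\flat,\mathrm{ext}} \xrightarrow{\ q\ } \cM^\flat \ , \]
where $\cM^{\flat,\mathrm{ext}}$ is the moduli of extensions, i.e.\ of fiber sequences $F_1 \to F \to F_2$ of flat objects. The product is then $q_\ast \circ p^!$ (resp.\ $q_\ast p^\ast$ for $G$-theory and $\mathrm{Coh}^b$). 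Associativity and the $\mathbb E_1$-structure come from the higher moduli of filtrations (the $2$-Segal structure), exactly as in \emph{loc.\ cit.}

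By \cref{rem:comparison_with_DPS}, the pair $(X,P)$ is conically stratified, so $\ptauhypX$ descends to $\cC \coloneqq \ConsPhyp(X;\Mod_\C)$, and $\pPervP(X)$ is literally the DPS moduli of flat objects. The first step is to check the finiteness hypotheses. The marked curve is algebraic, so \cref{thm:algebraic_stratified_spaces_are_conically_refineable_and_categorically_finite} gives categorical compactness. Then \cref{ConsP_is_compact} shows $\cC$ is of finite type, and \cref{thm:representability_perv} shows $\pPervP(X)$ is a $1$-Artin stack locally of finite presentation.

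The second step is to verify that the $t$-structure universally satisfies openness of flatness. \cref{prop:perverse_flatness_reformulation} and \cref{tau_flatness_flat_hyperdescent} show flatness is stalkwise, stable under base change and fpqc local. \cref{Perv_as_pullback} together with \cref{openness_general} shows it is open.

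The third step, which I expect to be the main obstacle, concerns the two maps of the convolution diagram:
\begin{itemize}
\item $p$ must be quasi-smooth, or at least derived lci, so that a virtual pullback $p^!$ exists on Borel--Moore homology;
\item $q$ must be representable and proper.
\end{itemize}

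For $p$, the fiber over $(F_1,F_2)$ is the linear stack of $\Hom(F_2,F_1)[1]$, i.e.\ of $\mathrm{Ext}^\ast(F_2,F_1)$ shifted. Hence $p$ is quasi-smooth provided $\Hom_{\cC}(F_2,F_1)$ has Tor-amplitude in $[-1,1]$ (shifted appropriately). I would prove this using the tangent complex formula of \cref{prop:categorically_compact_implies_representable}-(2) together with the fact that a punctured curve has cohomological dimension $2$. Concretely, Verdier duality on the complex curve gives $\mathrm{Ext}^i$ of perverse sheaves vanishing outside $0\le i\le 2$. Dévissage along the closed stratum and its open complement controls the amplitude, using the recollement of \cref{locally_closed_lower_shriek}.

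For $q$, whose fibers are Quot-type moduli of perverse subobjects, properness should follow from the properness criterion for quot schemes of \cite{Lampetti_Good_moduli}, which is the remaining input cited in the introduction. I would check its hypotheses, noetherianity and finite length of the heart over fields, directly for perverse sheaves on curves. Once $p$ and $q$ are under control, the $2$-Segal compatibility and base change are formal from \cite{DPS_Stable_pairs}, which yields all three statements simultaneously.
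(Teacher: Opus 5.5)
Your proposal is correct and follows essentially the same route as the paper. The paper applies \cite[Theorem II.4.3]{DPS_Stable_pairs} with the same inputs: finite type from \cref{thm:algebraic_stratified_spaces_are_conically_refineable_and_categorically_finite} and \cref{ConsP_is_compact}, and openness of flatness from \cref{thm:representability_perv}-(2). It replaces Assumption C.4 by the bound that $\Hom$ between perverse sheaves lies in homological degrees $[-2,0]$ (\cref{perverse_hom_amplitude}, which is exactly what makes your $p$ quasi-smooth), and it obtains properness of $q$ from \cite{Lampetti_Good_moduli}. The one minor difference is that the paper proves the $\Hom$ bound via Verdier duality, reducing to $R\Gamma_c(X,F\otimes\mathbb{D}G)$ lying in degrees $[0,2]$ and a spectral sequence, rather than by dévissage along the strata.
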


\begin{proof}
	We only need to apply \cite[Theorem II.4.3]{DPS_Stable_pairs}.
	Concerning Assumption C in \emph{loc.\ cit.}: C.1 is provided by \cref{thm:algebraic_stratified_spaces_are_conically_refineable_and_categorically_finite}; C.2 is automatic in this setting; C.3 has been verified in \cref{thm:representability_perv}-(2).
	Concerning C.4, we observe that, in the proof of \cite[Theorem II.4.3]{DPS_Stable_pairs} it is only needed its consequence \cite[Lemma II.4.2]{DPS_Stable_pairs}.
	We prove an analogue of this statement in our setting in \cref{perverse_hom_amplitude} below.
	Finally, to check that the map \cite[II.4.2]{DPS_Stable_pairs} is locally representable by proper algebraic spaces, we invoke \cite[Propositions 3.4.6 \& 4.2.8]{Lampetti_Good_moduli}.
\end{proof}

\begin{lem}\label{perverse_hom_amplitude}
	Let $k$ be a field.
	Let $(X,P)$ as above.
	Let $F,G\in \tensor*[^{\pfrak}]{\mathrm{Perv}}{_{P}^{\hyp}}(X;\Mod_k)$. 
	Then 
	\[ \Hom_{\ConsPhyp(X;\Mod_k)}(F, G) \in \Mod_k \]
	is concentrated in homological degrees $[-2,0]$.
\end{lem}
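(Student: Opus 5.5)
Two bounds are needed: the Hom complex has no homotopy in positive homological degree, and none below degree $-2$. Here $X$ is a smooth complex curve, real dimension $2$, stratified by $X_0 = X\smallsetminus\{p_1,\dots,p_n\}$ and $X_1 = \{p_1,\dots,p_n\}$.

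\textbf{Upper bound.} Vanishing of $\pi_i \Hom(F,G)$ for $i>0$ is $\Hom(F[i],G)=0$ for $i>0$. This is the $t$-structure axiom, since $F$ and $G$ both lie in the heart of $\ptauhypX$ (compare \cref{rem:Perv_1_truncated}).

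\textbf{Lower bound: dévissage.} Let $j\colon X_0\hookrightarrow X$ and $i\colon X_1\hookrightarrow X$ be the inclusions. I would use the recollement fiber sequence $j_! j^{\ast} F \to F \to i_\ast i^{\ast} F$. Applying $\Hom(-,G)$ gives a fiber sequence
\[ \Hom(i^{\ast}F, i^{!}G) \to \Hom(F,G) \to \Hom(j^{\ast}F, j^{\ast}G) \ , \]
so it suffices to bound each outer term below by $-2$.

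\textbf{Stalk amplitudes.} Perversity gives, for $i^{\ast}F$, stalks in homological degrees $\geq \pfrak(1) = -1$. The costalk $i^{!}G$ has stalks in degrees $\leq -1$ (\cref{lem:checking_perverse_on_stalks}). Local computation near a puncture will show more: write $G$ near $p$ as an extension of $j_{\ast}$ of a local system $L[1]$. Its $i^{!}$ is computed by the cohomology of a punctured disc, so the costalk lies in degrees $[-2,-1]$. Likewise the stalk $i^{\ast}F$ lies in degrees $[-1,0]$. These local facts can be read off from the exit-path description of a punctured disc, or from the standard cone computation near $p_k$.

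\textbf{Term over $X_1$.} Since $X_1$ is discrete, $\Hom(i^{\ast}F,i^{!}G)$ is a finite product over the points of Homs between a complex in degrees $[-1,0]$ and one in degrees $[-2,-1]$. Over a field this lives in degrees $[-2,0]$, which is the bound needed.

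\textbf{Term over $X_0$.} On $X_0$, $j^{\ast}F$ and $j^{\ast}G$ are shifts $L[1]$ and $M[1]$ of local systems. Then $\Hom(L,M)$ computes $\mathrm H^{\ast}(X_0; \mathcal{H}om(L,M))$. The open curve $X_0$ is homotopy equivalent to a $1$-dimensional CW complex when $n\geq 1$ or $X$ is noncompact, giving degrees $[-1,0]$. If $X$ is compact with $n=0$, it is a closed surface, giving degrees $[-2,0]$. In every case the term is in $[-2,0]$, and the long exact sequence of homotopy groups concludes the proof.

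\textbf{Main obstacle.} I expect the hardest step to be the local computation: the claims that $i^{\ast}F$ has no homotopy below degree $-1$ and $i^{!}G$ none below degree $-2$. I would try the exit-path quiver description of a disc with one puncture first. There perverse sheaves correspond to the standard $(\Phi,\Psi)$ quiver data, and the stalk and costalk are two-term complexes in the stated degrees.
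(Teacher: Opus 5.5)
Your proof is correct, but it takes a different route from the paper. The paper argues by Verdier duality. It writes $\Hom(F,G)\simeq R\Gamma(X,\mathbb{D}(F\otimes\mathbb{D}G))$. It then uses Poincar\'e--Verdier duality, together with the fact that $\mathbb{D}$ preserves perverse sheaves, to reduce to showing that $R\Gamma_c(X,F\otimes G)$ lies in homological degrees $[0,2]$ for perverse $F,G$. This follows from the spectral sequence $\pi_p\Gamma_c(X,\pi_q(F\otimes G))\Rightarrow\pi_{p+q}\Gamma_c(X,F\otimes G)$: in the unshifted normalization, $F\otimes G$ has amplitude $[0,2]$ with $\pi_0$ and $\pi_1$ punctually supported, and compactly supported cohomology on a curve has amplitude $2$ (amplitude $0$ for punctual sheaves).

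You instead split $\Hom(F,G)$ along the recollement sequence $j_!j^*F\to F\to i_*i^*F$. Only two things then enter: the cohomology of the open stratum with coefficients in $\mathcal{H}om(j^*F,j^*G)$, and the Hom complexes between stalks and costalks at the punctures. Your route never uses biduality $G\simeq\mathbb{D}\mathbb{D}G$, which the paper's formula for $\Hom$ relies on and which requires perfect stalks. So your argument proves the lemma for arbitrary objects of the heart, including those with infinite-dimensional stalks, which is the generality in which it is stated. The paper's route is shorter once the six-functor formalism is available, and it covers the Hall algebra application, where only objects with perfect stalks occur.

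The local step you single out as the main obstacle needs no quiver description. Let $D^\ast_p$ be a small punctured disc at $p$. Then $i^*j_*j^*G\simeq R\Gamma(D^\ast_p,j^*G)$ at $p$, which in your normalization lies in homological degrees $[-1,0]$. The fiber sequence $i^!G\to i^*G\to i^*j_*j^*G$ together with $i^!G\leq -1$ gives $i^*G\in[-1,0]$. In turn $i^!G\in[-2,0]\cap(-\infty,-1]=[-2,-1]$, and the same argument applies to $F$.

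Two minor slips, neither of which affects the argument since only relative shifts enter. With your labelling, $\pfrak(0)=0$ puts $j^*F$ and $j^*G$ in degree $0$, so they are not $L[1]$ and $M[1]$. Also, ``$G$ is near $p$ an extension of $j_*$ of a local system'' is not accurate in general; note for instance that $i^!j_*=0$.
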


\begin{proof}
	Let us denote by
	\begin{equation*}
		\mathbb{D} \colon \ConsPhyp(X;\Mod_k)\to \ConsPhyp(X;\Mod_k)
	\end{equation*}
	the Verdier duality functor.
	Then, there is a canonical equivalence 
	\[ \Hom_{\ConsPhyp(X;\Mod_k)}(F, G)\simeq R\Gamma(X,\mathbb{D}(F\otimes \mathbb{D}G)) \ . \]
	By Poincaré--Verdier duality, showing that the above complex is concentrated in homological degrees $[-2,0]$ amounts to showing that $R\Gamma_c(X,F\otimes \mathbb{D}G)$ is concentrated in homological degrees $[0,2]$.
	Since $\mathbb{D}G$ is perverse, we are thus left to show that if $F,G\in {}^{\pfrak} \mathrm{Perv}_P^{\hyp}(X;\Mod_k)$, then $\Gamma_c(X,F\otimes G)$  is concentrated in homological degrees $[0,2]$.
	By definition, $F\otimes G$ is concentrated in homological degrees $[0,2]$ and both $\pi_{1}(F\otimes G)$ and $\pi_{0}(F\otimes G)$ are punctually supported.
	Since $X$ is a curve over $\C$, the only potential non zero terms of the spectral sequence 	
	\[ E^2_{p,q} \coloneqq \pi_p \Gamma_c(X,\pi_q (F\otimes G)) \Rightarrow \pi_{p+q}\Gamma_c(X, F\otimes G) \]
	are thus
	\[ E^2_{0,0}, \ E^2_{0,1} , \ E^2_{0,2} , \ E^2_{-1,2} , \ E^2_{-2,2} \ . \]
	This implies the sought-after range. 	
\end{proof}



\newcommand{\etalchar}[1]{$^{#1}$}
\def\cprime{$'$}
\providecommand{\bysame}{\leavevmode\hbox to3em{\hrulefill}\thinspace}
\providecommand{\MR}{\relax\ifhmode\unskip\space\fi MR }
\providecommand{\MRhref}[2]{%
	\href{http://www.ams.org/mathscinet-getitem?mr=#1}{#2}
}
\providecommand{\href}[2]{#2}

\end{document}